\newcommand{%
    \tikzsetnextfilename{}%
    \input{.tikz}%
}[1]{%
    \tikzsetnextfilename{#1}%
    \input{#1.tikz}%
}
\newcommand{\lr}[1]{%
    \left( #1 \right)%
}
\def\Vol{\operatorname{Vol}}
\def\ang{\CMcal{A}}
\def\tri{\mathcal{T}}
\def\edge{\mathfrak{e}}
\def\Z{\mathbb{Z}}
\def\R{\mathbb{R}}
\def\N{\mathbb{N}}
\newcommand{\angles}[3]{%
    \lr{#1,\; #2,\; #3}%
}
\newcommand{\angT}[1]{%
    \Theta^{(#1)}%
}
\theoremstyle{plain}
\newtheorem{theorem}{Theorem}%[section]
\newtheorem*{theorem*}{Theorem}
\newtheorem{lemma}[theorem]{Lemma}
\newtheorem{corollary}[theorem]{Corollary}
\newtheorem*{claim*}{Claim}
\theoremstyle{definition}
\newtheorem{definition}[theorem]{Definition}
\newtheorem*{definition*}{Definition}
\newtheorem{conjecture}[theorem]{Conjecture}
\theoremstyle{remark}
\newtheorem{remark}[theorem]{Remark}
\numberwithin{equation}{section}
\date{\today}
\begin{document}
\title{\Large On the complexity of 2-bridge link complements}
\author{James Morgan and Jonathan Spreer}

\begin{abstract}
	We reprove a necessary condition for the Sakuma-Weeks triangulation of a 2-bridge link complement to be minimal in terms of the mapping class describing its alternating 4-string braid construction. For the 2-bridge links satisfying this condition we construct explicit angle structures on the Sakuma-Weeks triangulations and compute both multiplicative and additive lower bounds on the complexity of the link complements via volume estimates. These lower bounds are an improvement on existing volume estimates for the 2-bridge links examined.
\end{abstract}

\primaryclass{57Q15, 57K31, 57K32, 57K10}
\keywords{3--manifold, minimal triangulation, complexity, 2-bridge link, angle structure, hyperbolic volume}

\makeshorttitle

\section{Introduction}

This paper contains results about the complexity of $2$-bridge link complements.
These are obtained through a detailed study of triangulations of them due to Sakuma and 
Weeks \cite{sakuma_examples_1995}. Before we present our results, we first motivate in
more general terms, why we are interested in studying particularly well-structured 
triangulations of manifolds.

\paragraph*{Motivation.}
When studying a manifold $M$, a useful first step is to decompose it into simplices.
The resulting {\em triangulation} is a finite list of simplices and gluing maps 
representing the manifold. This list can then be used as input for mathematical software 
to compute topological or geometric properties of $M$.

The price we pay for using triangulations is that any given manifold can 
be triangulated in infinitely many ways. Depending on the application, some of 
these triangulations will be better choices than others, leading to the question of what are 
``good'' and what are ``bad'' triangulations.

For $M$ the $2$-dimensional sphere this can be illustrated as follows: the three 
simplicial platonic solids -- the tetrahedron, the octahedron, and the icosahedron -- have
as boundaries the only three triangulations of the $2$-sphere with the maximum number of
symmetries. Hence, they are natural choices for the ``best'' triangulation of the 
$2$-sphere. The tetrahedron is the simplicial triangulation with the smallest number of 
pieces, the octahedron is the smallest triangulation with central symmetry, and the 
icosahedron is the smallest triangulation covering a simplicial triangulation of the real 
projective plane.

In this article, we focus on the class of {\em cusped hyperbolic $3$-dimensional manifolds},
that is, orientable non-compact $3$-manifolds admitting a complete hyperbolic 
metric. These manifolds occur as the interior of compact $3$-manifolds with boundary
a finite, non-empty collection of tori. For most metrics, these are the generic 
pieces in the decomposition of an arbitrary $3$-manifold into geometric pieces, and 
hence a very important class of manifolds, see for instance \cite{maher10-random-heegaard}. The most efficient way 
to represent cusped $3$-manifolds (or, to some extent, their compact counterparts) is by 
	{\em ideal triangulations}. These are triangulations with one vertex per cusp, and no further
vertices in their interior. The represented manifold is homeomorphic to the underlying space of the 
triangulation with its vertices removed.\footnote{Note that ideal triangulations do not require the
	underlying manifold to admit a hyperbolic structure. They are also used to represent other
	manifolds, such as non-hyperbolic knot complements.}

But even among ideal triangulations, there are several types of triangulations that compete
for the title of a ``good'' triangulation of a cusped hyperbolic $3$-manifold. 

\begin{itemize}
	\item In a {\bf geometric (ideal) triangulation} every (hyperbolic ideal) tetrahedron
	      can be assigned a shape of strictly positive volume, such that their gluings 
	      satisfy Thurston's gluing equations for a complete hyperbolic metric \cite{Thurston-Book-97}. 
	      Determining whether a given triangulation is geometric involves looking at its space 
	      of angle structures, see \cite{futer_angled_2011} for details. Despite a beautiful and rich theory surrounding 
	      this topic, it is still not known if every
	      cusped hyperbolic $3$-manifold admits a geometric triangulation.
	\item It follows from work of Epstein and Penner \cite{Epstein_Penner_1988}, that every once-cusped
	      hyperbolic manifold has a canonical decomposition into hyperbolic polyhedra. 
	      If these polyhedra are all tetrahedra, this {\em canonical triangulation} is also
	      a geometric one.\footnote{It is not known, if the canonical decomposition can always be
		      simplicially subdivided such that the resulting triangulation is geometric: 
		      incompatible subdivisions of faces of polyhedra may require the introduction of 
		      zero-volume tetrahedra shapes.} If two manifolds are represented by canonical
	      triangulations, solving the homeomorphism problem reduces to testing isomorphy 
	      on these triangulations.
	\item The {\em complexity $c(M)$} of a 3--manifold $M$ is the minimum number of tetrahedra
	      required in a (pseudo--simplicial) triangulation of $M$.  In the case that $M$ is closed and irreducible, this definition agrees with the complexity defined by Matveev \cite{Matveev-complexity-1990} unless the manifold is $S^3$, $\R P^3$, or $L(3,1)$. The notion of complexity as a measure of minimal triangulations is applied more broadly, including to ideal triangulations. It acts as an important organising principle when 
	      enumerating manifolds. A triangulation realising its complexity is called 
		      {\em minimal}. Minimal triangulations have many obvious advantages in computations,
	      especially since algorithmic solutions to topological problems often scale 
	      exponentially in the size of the input triangulation. Neither canonical, nor geometric
	      triangulations must be minimal. In fact, this paper contains many examples of canonical 
	      triangulations that are not minimal.\footnote{It is an interesting question to ask whether for a given
		      cusped hyperbolic manifold, at least one minimal triangulation must be geometric. 
		      Since we do not even know if geometric triangulations always exist, this question is
		      necessarily open as well.}
\end{itemize}

The complexity and the hyperbolic volume of a hyperbolic $3$-manifold (cusped or closed)
are connected through the following one-sided inequality due to Thurston \cite{thurston_geometry_1978}:

\begin{equation}
	\label{eq:volc}
	c(M) \leq  \frac{\Vol(M)}{v_3}
\end{equation}

Here, $v_3\approx 1.0149\dots$ denotes the volume of a regular ideal hyperbolic tetrahedron -- the largest 
volume of any hyperbolic tetrahedron. The inequality essentially expresses that we need
at least as many tetrahedra in a triangulation of a manifold $M$ to realise the volume
of $M$ as a sum of volumes of this number of tetrahedra.

It follows that any geometric triangulation only involving regular ideal hyperbolic 
tetrahedra is minimal. Manifolds admitting such triangulations are referred to as {\em tetrahedral} in the literature, see \cite{Fominykh-census-2016} for a census of examples.
\Cref{eq:volc} is still one of very few lower bounds on the complexity of $3$-manifolds.

\paragraph*{Related work.} In a series of papers, Jaco, Rubinstein, and Tillmann develop and refine a method to use 
the $\mathbb{Z}_2$-Thurston norm of homology classes to obtain tight complexity bounds
for infinte families of lens spaces \cite{jaco_minimal_2009},  and other spherical $3$-manifolds \cite{jaco_mathbb_2013}.
The same group of authors together with the second author extend this theory to
determine the complexity of further infinite families of manifolds \cite{jaco_2thurston_2020}, including 
canonical triangulations of once-punctured torus bundles\footnote{The monodromy-ideal triangulations of once-punctured torus bundles, first described in \cite{gueritaud_canonical_2006}, are very similar in structure to the Sakuma-Weeks triangulations of hyperbolic $2$-bridge links, the focus of this paper.} in \cite{jaco_minimal_2020}, and additional
once-cusped hyperbolic $3$-manifolds in \cite{jaco2022complexity,rubinstein2021new}. See also \cite{Nakamura-complexity-2017} for closely related 
work by Nakamura.

Complexity bounds for Dehn-fillings have been determined by Cha \cite{Cha-complexity-2016,Cha-topological-2016,Cha-complexities-2018}, and
in more special cases in \cite{jaco2022complexity}. In \cite{Lackenby19Fibred}, Lackenby and Purcell give complexity bounds
for fibred manifolds based on properties of the monodromy gluing the fibres.

The canonical and geometric triangulations of hyperbolic $2$-bridge links were first described
by Sakuma and Weeks in \cite{sakuma_examples_1995}. They have been studied in detail by Futer in the appendix to \cite{gueritaud_canonical_2006} and 
Purcell in \cite{purcell_hyperbolic_2020}. A simplified method to compute their volumes exactly is due to  
Tsvietkova, see \cite{Tsvietkova14TwoBridgeLinks}. Bounds on their complexity are contained in work by 
Ishikawa and Nemoto \cite{ishikawa_construction_2016}, where they observe that many of the Sakuma-Weeks triangulations
are not minimal by constructing a smaller upper bound on the complexity of some $2$-bridge link complements. 
They also prove that a particular infinite family of $2$-bridge link triangulations is, in fact, minimal.
In more recent work, Aribi, Gu\'eritaud and Piguet-Nakazawa describe triangulations of twist knots (knots of bridge number two) which are roughly half the size of the improved Ishikawa and Nemoto bound -- and conjectured to be minimal \cite[Conjecture 3.3]{Aribi19Twist}.

\paragraph*{Results.}
In this paper, we further study the complexity of the Sakuma-Weeks triangulations of hyperbolic $2$-bridge link complements.
We give a very fundamental argument re-proving the Ishikawa and Nemoto non-minimality result
in \Cref{thm:nonminimal}, and conjecture that all of the remaining triangulations are minimal: 

\begin{conjecture}
	\label{conj:minimal}
	Let $K(\Omega)$ be the $2$-bridge link generated by the word $\Omega = R^{a_1}L^{a_2}\cdots(R^{a_n}\mid L^{a_n})$. 
	Let $\tri = \tri(M)$ be the Sakuma-Weeks triangulation of the complements 
	$S^3\backslash K(\Omega)$. $\tri$ is minimal if and only if $a_1=a_n = 1$ and 
	$a_i\in\{1,2\}$ for $1<i<n$ with $n\geq 2$.
\end{conjecture}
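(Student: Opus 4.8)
The conjecture is an ``if and only if'' whose two implications are of a completely different nature. The forward implication --- if $\tri$ is minimal then $a_1=a_n=1$ and $a_i\in\{1,2\}$ for $1<i<n$ --- is the re-proof of Ishikawa--Nemoto recorded in \Cref{thm:nonminimal}: arguing by contraposition, for every word violating the hypothesis (so $a_1\geq 2$, or $a_n\geq 2$, or some interior $a_i\geq 3$) one exhibits an ideal triangulation of $S^3\backslash K(\Omega)$ with strictly fewer tetrahedra than $\tri$. Concretely, the Sakuma--Weeks triangulation is assembled by stacking ``fan'' and ``zigzag'' layered blocks dictated by the $RL$-word, and a run $R^{a_i}$ (or $L^{a_i}$) with $a_i\geq 3$ triangulates an intermediate twist region uneconomically; I would replace such a block by a shorter layered chain realising the same gluing, verify the rebuilt complex is still an ideal triangulation of the same complement (one vertex per cusp, Euler characteristic zero, unchanged fundamental group), and count tetrahedra to exhibit the strict decrease, with a parallel surgery handling the boundary exponents $a_1,a_n$.

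The reverse implication --- the numerical conditions force minimality --- is the genuinely hard direction, and the reason the statement is only conjectural. The plan is to bound $c(M)$ below through hyperbolic volume. For every admissible word ($a_1=a_n=1$, interior $a_i\in\{1,2\}$, $n\geq 2$) one first writes down an explicit angle structure on $\tri$, uniform across the family, so that all dihedral angles --- and hence the associated sum of Lobachevsky-function terms --- are closed-form functions of the $a_i$. By the standard principle (Casson--Rivin; Futer--Gu\'eritaud) that the volume of any angle structure on $\tri$ bounds $\Vol(M)$ from below --- recall the Sakuma--Weeks triangulation is in fact the geometric, indeed canonical, ideal triangulation, so this is a lower estimate for its true hyperbolic volume --- maximising over the explicitly parametrised family of angle structures produces a closed-form bound $\Vol(M)\geq f(\Omega)$. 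Combining with the volume--complexity inequality \Cref{eq:volc} gives $c(M)\geq f(\Omega)/v_3$, and since $|\tri|$ is an elementary function of the $a_i$ it would remain only to verify $f(\Omega)/v_3 > |\tri|-1$ for all admissible $\Omega$.

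The main obstacle is precisely that last inequality. A geometric triangulation with $t$ tetrahedra has volume strictly below $t\,v_3$, and for long words the average tetrahedron volume in $\tri$ drifts well below $v_3$, so the estimate $c(M)\geq\Vol(M)/v_3$ is too lossy: it will realistically yield only the additive and multiplicative complexity bounds advertised in the abstract --- of the form $c(M)\geq|\tri|-O(n)$, or $c(M)\geq\rho\,|\tri|$ with $\rho<1$ --- rather than the sharp $c(M)\geq|\tri|$. Closing the gap seems to require either a sharper per-block volume estimate exploiting the rigidity of the Sakuma--Weeks combinatorics (showing each fan or zigzag block must carry more volume than the generic $v_3$-per-tetrahedron bound allows, in the spirit of Futer--Kalfagianni--Purcell twist-region estimates), or a genuinely different lower-bound technique, e.g.\ adapting the $\Z_2$-Thurston-norm machinery of Jaco--Rubinstein--Tillmann to these one- and two-cusped complements. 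I would attempt the former first: try to show that every ideal triangulation of $M$ must contain each layered block of $\tri$ up to bounded local modification, thereby reducing global minimality to a finite, computer-assisted local check.
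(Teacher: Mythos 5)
You have correctly identified the logical status of this statement: it is a conjecture, and the paper proves only the ``only if'' direction (as \Cref{thm:nonminimal}), with the ``if'' direction left open and supported only by the partial bound $0.8|\tri|\leq c(M)\leq|\tri|$ of \Cref{thm:complexityBound0.8}. Your diagnosis of why the volume method cannot close the gap --- the average tetrahedron volume in the geometric solution drifts strictly below $v_3$ for long words, so Thurston's inequality is inherently lossy --- is exactly right and matches what the paper actually obtains (a multiplicative constant $0.8$ and an additive bound in \Cref{cor:additivecorollary}). Your suggested alternatives ($\Z_2$-Thurston norm machinery, sharper per-block estimates) are reasonable directions, though of course neither is carried out here.

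For the direction that \emph{is} proved, your method diverges from the paper's and contains a genuine gap. The paper does not rebuild a block and re-verify the manifold: it shows (\Cref{lem:degree3,lem:degree4}) that $a_1>1$ or $a_n>1$ forces an edge of degree $3$ with three distinct tetrahedra around it, admitting a Pachner 3--2 move, and that an interior $a_i\geq 3$ forces two \emph{incident} degree-$4$ edges, on which a 4--4 move creates a degree-$3$ edge and then a 3--2 move removes a tetrahedron. These local moves preserve the homeomorphism type by construction, so nothing needs to be re-verified. Your plan instead proposes to replace an uneconomical layered block by a shorter chain and then certify the result by checking ``one vertex per cusp, Euler characteristic zero, unchanged fundamental group.'' That certification is insufficient: these invariants do not determine the homeomorphism type of a cusped $3$-manifold, so your surgery would leave you without a proof that the smaller complex still triangulates $S^3\backslash K(\Omega)$. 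You would need either to realise your block replacement as a sequence of elementary moves (which is essentially what the paper does) or to exhibit an explicit homeomorphism, e.g.\ by identifying both complexes with a common polyhedral decomposition. Note also that the necessity statement includes $n\geq 2$, which comes from hyperbolicity (Menasco) rather than from any triangulation count, and should be handled separately in your contrapositive.
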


Our main contribution is an application of \Cref{eq:volc} to give lower bounds for the complexity of 
$2$-bridge link complements. More precisely, for the hyperbolic $2$-bridge links covered 
by \Cref{conj:minimal} we prove the following statement.

\newtheorem*{thm:complexityBound0.8}{Theorem~\ref{thm:complexityBound0.8}}
\begin{thm:complexityBound0.8}
Let $L=L(\Omega)$ be the $2$-bridge link associated to the word $\Omega=RL^{a_1}\cdots(L^{a_n}R\mid R^{a_n}L)$ where $a_i\in\{1,2\}$ for all $1\leq i\leq n$ and $n\geq 1$. Let $M=S^3\backslash L$ and $\tri=\tri(M)$ be the Sakuma-Weeks triangulation of $M$ and let $c(M)$ denote the complexity of $M$. Then,
\begin{equation*}
	0.8|\tri| \leq c(M) \leq |\tri|
\end{equation*}
\end{thm:complexityBound0.8}

Our proof goes by explicitly describing strict angle structures on the canonical triangulations
of all of these $2$-bridge link triangulations, alongside with an evaluation of the volume functional 
on those angle structures. This yields a lower bound on their hyperbolic volume and 
hence a lower bound on complexity by \Cref{eq:volc}. Note that exact volume computations,
as given for instance in \cite{Tsvietkova14TwoBridgeLinks}, are too complicated to infer explicit volume or complexity bounds for infinite families of $2$-bridge links. Instead, a simplified
construction of strict angle structures involving very few disctinct tetrahedra shapes is necessary to keep calculations at a manageable level.

Naturally, our bounds are best for $2$-bridge links with volumes closer to ``$v_3 \,\times$ number of tetrahedra in the Sakuma Weeks triangulation''.
Specifically, we have the following corollary.

\newtheorem*{cor:additivecorollary}{Corollary~\ref{cor:additivecorollary}}
\begin{cor:additivecorollary}
Let $W = \left\{\Omega = RL^{a_1}\cdots(L^{a_n}R\mid R^{a_n}L)\mid a_i\in\{1,2\}\text{ for }1\leq i\leq n\right\}$ and for each $C\in\N$ set
\begin{equation*}
	W_C = \left\{\Omega\in W\mid a_1+\cdots+a_n = n+C,\;n\in\N,\,n\geq C\right\}.
\end{equation*}
For $\Omega\in W_C$ let $K=K(\Omega)$ be the associated $2$-bridge link. Let $M=S^3\backslash K$ and $\tri=\tri(M)$ be the Sakuma-Weeks triangulation of $M$ with $|\tri| = 2(n+1+C).$ Let $c(M)$ denote the complexity of $M$. Then,
\begin{equation*}
	2n + 1 + (0.9632C + 0.393) \leq c(M) \leq 2n + 1 + (2C + 1).
\end{equation*}
\end{cor:additivecorollary}

\paragraph*{Outline.}
The paper is organised as follows. In \Cref{sec:prelims} we give an introduction into $2$-bridge links, triangulations, and angle structures.
In \Cref{sec:SW} we briefly describe the Sakuma-Weeks triangulations of $2$-bridge link complements, pointing to existing literature where possible.
\Cref{sec:nonmin} contains our new proof of non-minimality of some of the Sakuma-Weeks triangulations. 
\Cref{sec:min} is the heart of the paper. It contains the volume estimates for all $2$-bridge links
covered by \Cref{conj:minimal}. While this involves a lengthy and technical calculuation, it is
not difficult from a conceptual point of view. Finally, in \Cref{ssec:bounds}, we state our complexity bounds resulting from this calculation. 

\paragraph*{Acknowledgements.} The authors thank Stephan Tillmann for the initial ideas that lead to this paper and for useful discussions during its development. The authors also thank the anonymous referee for many helpful comments that improved this paper. We thank Technische Universit\"{a}t Berlin and Freie Universit\"{a}t Berlin for hosting the authors during October and November 2023.
The work of both authors is partially supported by the Australian Research Council under the Discovery Project scheme (grant number DP190102259). James Morgan is additionally supported by an Australian Government Research Training Program (RTP) Scholarship.

\section{Preliminaries}
\label{sec:prelims}

\subsection{$2$-bridge links}
We follow the setup from Appendix A in \cite{gueritaud_canonical_2006}. Let $S$ denote a sphere with four marked points, which we refer to as a \emph{pillowcase}. A four-string braid between two pillowcases, one interior and one exterior, is an embedding of four disjoint arcs into the \emph{product region} $S\times I$, where $I=[a,b]$, such that each arc connects a marked point on the exterior pillowcase $S\times\{b\}$ to a marked point on the interior pillowcase $S\times\{a\}$.
Such an embedding is described by a word
\begin{equation*}
	\Omega = \begin{cases}
		R^{a_1}L^{a_2}\cdots R^{a_n}\quad\text{or}\quad L^{a_1}R^{a_2}\cdots L^{a_n}, & \text{ if }n\text{ is odd},  \\
		R^{a_1}L^{a_2}\cdots L^{a_n}\quad\text{or}\quad L^{a_1}R^{a_2}\cdots R^{a_n}, & \text{ if }n\text{ is even},
	\end{cases}
\end{equation*}
where $a_i\in\Z$. We fix the projection of the braid onto a plane such that each $R$ encodes a twist between the top two strands and each $L$ encodes a twist between the right two strands, shown in \Cref{fig:RandL}. We refer to these as \emph{vertical} and \emph{horizontal} crossings, respectively, and note that the word $\Omega$ encodes a sequence of twists from the outside in; we start at $S\times\{b\}$ and work in towards $S\times\{a\}$.
\begin{figure}[h]
	\centering
	\includegraphics[width=0.5\textwidth]{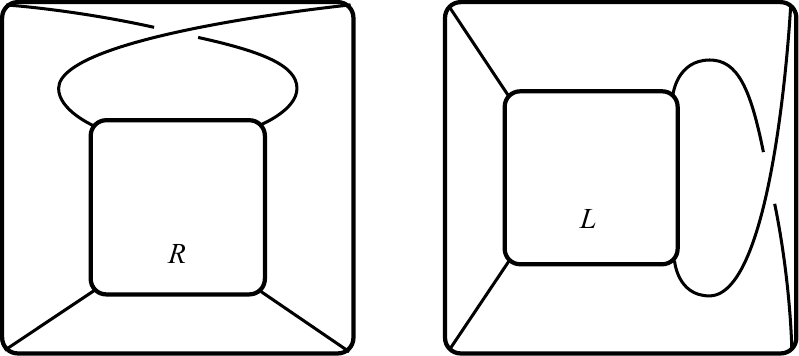}
	\caption{Crossings of the four arcs encoded by $R$ and $L$. A crossing encoded by $R$ is called \emph{vertical} and a crossing encoded by $L$ is called \emph{horizontal}.}
	\label{fig:RandL}
\end{figure}
A \emph{syllable} is a maximal subword in $L$ or $R$. For example, $R^{a}R^{b}$ will occur as the single syllable $R^{a+b}$. The word $\Omega$ consists of $n$ syllables. The sign of each $a_i$ determines whether a crossing is positive or negative and $|a_i|$ determines the number of crossings in the syllable. The crossings associated to a syllable form a \emph{twist region} which is homeomorphic to $S\times I$. A \emph{$2$-bridge link} $K(\Omega)$ is obtained by adding two arcs with a single crossing connecting the opposite pairs of marked points on the exterior pillowcase and two arcs with a single crossing connecting opposite pairs of marked points on the interior pillowcase. This is shown in \Cref{fig:twobridgelink}. Note that every $2$-bridge link can be obtained this way.

\begin{figure}[h]
	\centering
	\includegraphics[scale=0.5]{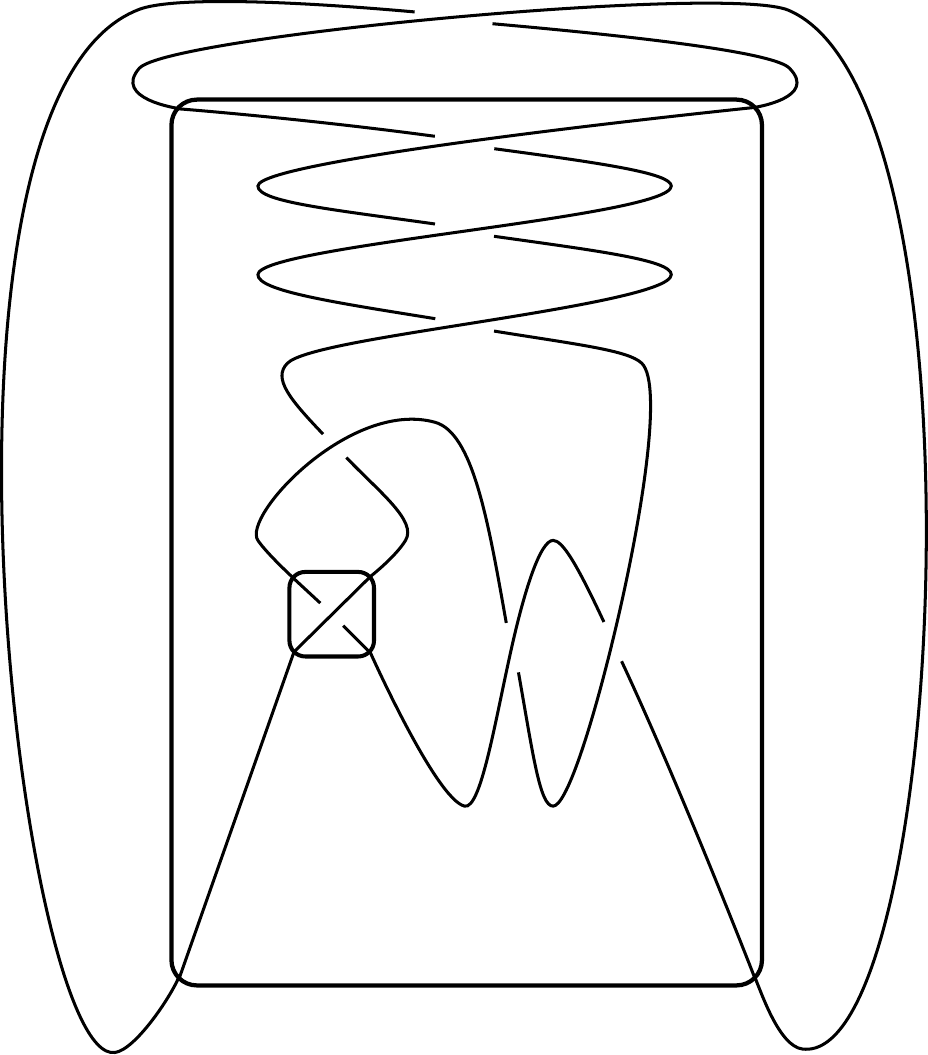}
	\caption{The $2$-bridge link $K(\Omega)$ with $\Omega = R^3L^2R$. Removing the outermost and innermost crossings returns the four-string braid corresponding to $\Omega$.}
	\label{fig:twobridgelink}
\end{figure}

The presentation of a $2$-bridge link by a word $\Omega$ is connected to continued fraction expansions of rational tangles, see \cite{conway_enumeration_1970,purcell_hyperbolic_2020}. Hence we make the following simplifying assumptions:
\begin{itemize}
	\item Given any word $\Omega$ describing a $2$-bridge link $K(\Omega)$ there is another word $\Omega'$ with either all $a_i>0$ or all $a_i<0$ for $1\leq i\leq n$ such that $K(\Omega')$ is ambient isotopic to $K(\Omega)$;
	\item The first syllable of $\Omega$ is $R^{a_1}$; and
	\item $|a_1|\geq 1$ and $|a_n|\geq 1$.
\end{itemize}
Throughout we assume that $a_i>0$ for all $1\leq i\leq n$. The case where all $a_i<0$ is similar.

For the purposes of this paper we concern ourselves only with hyperbolic $2$-bridge links; those whose complements admit a complete hyperbolic metric of finite volume. For each word $\Omega$ the associated $2$-bridge link $K(\Omega)$ is non-split, in the sense that the complement is irreducible, and the chosen projection is alternating. In particular, $K(\Omega)$ is a torus link when $\Omega$ contains exactly one syllable. This imposes the following condition on $\Omega$ due to Menasco.

\begin{theorem}[\cite{menasco_closed_1984}, Corollary 2]
	A $2$-bridge link $K(\Omega)$ is hyperbolic if and only if the associated word $\Omega$ has at least two syllables.
\end{theorem}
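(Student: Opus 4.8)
The plan is to deduce the statement directly from Menasco's classification of alternating links in \cite{menasco_closed_1984}. As recorded just above, for every admissible word $\Omega$ the link $K(\Omega)$ is prime and non-split, and the fixed projection is alternating; we may further assume it is reduced (no nugatory crossings, which can always be removed without leaving the class of words with all $a_i\geq 1$). Menasco's work shows that the complement of such a link contains no essential sphere or torus, so by Thurston's hyperbolisation of Haken manifolds it is hyperbolic unless it is Seifert fibred, and the only Seifert-fibred alternating links are the $(2,k)$-torus links. Hence it suffices to check that $K(\Omega)$ is a $(2,k)$-torus link \emph{exactly} when $\Omega$ has a single syllable.

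One direction is already recorded in the preliminaries: if $n=1$ then $K(R^{a_1})$ is a torus link, and is therefore non-hyperbolic. For the converse I would show that $n\geq 2$ forces $K(\Omega)$ not to be a torus link. The shortest route uses the continued-fraction dictionary between words and $2$-bridge fractions $p/q$ (see \cite{conway_enumeration_1970,purcell_hyperbolic_2020}): up to the bookkeeping introduced by the fixed outer and inner crossings on the two pillowcases, the word $R^{a_1}L^{a_2}\cdots$ with all $a_i\geq 1$ corresponds to the continued fraction $[a_1,a_2,\dots,a_n]$, and a short induction on $n$ shows that two or more positive terms produce a fraction with $q\not\equiv\pm1\pmod p$. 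Since a $2$-bridge link is a $(2,k)$-torus link precisely when it admits a $1$-bridge presentation, i.e.\ when $q\equiv\pm1\pmod p$, such a $K(\Omega)$ is not a torus link and is therefore hyperbolic by Menasco. A diagram-theoretic alternative avoids fractions entirely: by the Tait flyping theorem of Menasco and Thistlethwaite, the number of syllables -- the twist number of the reduced alternating diagram -- is a link invariant, and it equals $1$ for every $(2,k)$-torus link, so no $K(\Omega)$ with $n\geq 2$ can be one.

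The only genuine work is the bookkeeping in the converse direction: one must track precisely how the fixed crossings added on the two pillowcases shift the continued fraction (equivalently, the plat closure of the $4$-braid word) so that the single-syllable words are matched with the torus links and no word with $n\geq 2$ slips into that family. Everything else is an immediate appeal to Menasco's theorem; indeed the cited statement (\cite{menasco_closed_1984}, Corollary~2) is essentially this specialisation, so the "proof" should be little more than a careful pointer to it together with the identification of the torus-link cases.
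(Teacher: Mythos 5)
There is no proof of this statement in the paper to compare against: the result is imported verbatim as Corollary~2 of Menasco \cite{menasco_closed_1984}, so anything you write here is supplying an argument the authors deliberately outsource. With that caveat, your sketch is the standard and correct route: Menasco shows a prime, non-split alternating link complement is atoroidal, hyperbolisation then leaves only the $(2,k)$-torus links to exclude, and one identifies those among the $K(\Omega)$ as exactly the single-syllable words.

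The one place you need to be careful is the continued-fraction step, and it is exactly the bookkeeping you flag. As literally written, ``two or more positive terms produce a fraction with $q\not\equiv\pm1\pmod p$'' is false: $[a,1]=(a+1)/1$ is a torus link. What saves you is that $\Omega$ is not the whole diagram --- the two crossings added on the exterior and interior pillowcases absorb into the first and last syllables, so the fraction attached to $K(\Omega)$ is $[a_1+1,a_2,\dots,a_{n-1},a_n+1]$ (check: $\Omega=RL$ gives $[2,2]=5/2$, the figure-eight knot, consistent with its two-tetrahedron Sakuma--Weeks triangulation). For $n\geq 2$ every entry of this fraction is at least $2$, and then the induction is clean: if the tail evaluates to $q/r$ with $q\geq 2$, then $p=b_1q+r>2q$, so $2\leq q\leq p-3$ and $q\not\equiv\pm1\pmod p$. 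For $n=1$ the fraction is $(a_1+2)/1$, a $(2,k)$-torus link, matching the direction already recorded in the preliminaries. Your diagrammatic alternative via the Tait flyping theorem also works, with the minor proviso that the twist number is only an invariant of \emph{twist-reduced} alternating diagrams; the standard $2$-bridge diagrams are twist-reduced, so the count of $n$ twist regions (after the two extra crossings merge into the outer syllables) is indeed an invariant distinguishing $n\geq 2$ from the torus links. See \cite{conway_enumeration_1970,purcell_hyperbolic_2020} for the fraction conventions.
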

\subsection{Triangulations}
A \emph{triangulation} $\tri$ is a decomposition of a $3$-manifold $M$ into tetrahedra complete with instructions on how to glue the tetrahedra together in order to recover $M$. We allow for two faces of the same tetrahedron to be glued together; such a triangulation is sometimes called a \emph{generalised triangulation} in the literature and possesses less restrictions than a simplicial complex. The gluing instructions are done in the standard way, taking vertices to vertices, edges to edges, and faces to faces. 

The $0$-, $1$-, and $2$-dimensional simplices of $\tri$ are called the \emph{vertices}, \emph{edges}, and \emph{faces} of the triangulation, respectively. We call the collection of $k$-dimensional simplices the \emph{$k$-skeleton} of $\tri$ and denote it by $\tri^{(k)}$. Of particular importance to us will be the \emph{edge classes} of $\tri$ contained in $\tri^{(1)}$. An edge class is the equivalence class of edges formed when gluing tetrahedra together. The \emph{degree} of an edge class is the number of edges contained in the equivalence class. We will often refer to these simply as edges when the context is clear.

For this paper we work with (topologically) \emph{ideal triangulations} $\widehat{\tri} = \tri\backslash\tri^{(0)}$. These triangulations are formed from \emph{ideal tetrahedra} - tetrahedra with their vertices removed. The elements of $\tri^{(0)}$ are termed the \emph{ideal vertices} of $\widehat{\tri}$ and the elements of $\widehat{\tri}^{(1)}$ the \emph{ideal edge classes}, or simply the ideal edges of $\widehat{\tri}$.

For more technical information on ideal triangulations we direct the reader to \cite[Section~2]{jaco_minimal_2020}, and \cite[Chapter~4]{purcell_hyperbolic_2020}.

\subsection{Hyperbolic volume and angle structures}
We introduce the basic definitions and results we work with here but omit proofs and instead direct the reader to Futer and Gu\'eritaud \cite{futer_angled_2011}, Milnor \cite{milnor_hyperbolic_1982} or Purcell \cite{purcell_hyperbolic_2020}.

\begin{definition}\label{defn:angle-structure}
	Let $M$ be a $3$-manifold with ideal triangulation $\tri$ consisting of $n$ ideal tetrahedra $\{\Delta_1,\dots,\Delta_n\}$. An \emph{angle structure} $\Theta=(\theta_1,\dots,\theta_{3n})$ on $\tri$ is an assignment of (internal) dihedral angles $\theta_{3i-2},\theta_{3i-1},\theta_{3i}$ on each pair of opposite edges of each ideal tetrahedron $\Delta_i$ in $\tri$ such that the following hold:
	\begin{enumerate}
		\item $\theta_{3i-2},\theta_{3i-1},\theta_{3i}\in(0,\pi)$ for each $1\leq i\leq n$,
		\item For each ideal tetrahedron $\Delta_i$ we have $\theta_{3i-2}+\theta_{3i-1}+\theta_{3i} = \pi$,
		\item The sum of dihedral angles around every (interior) ideal edge class in $\tri$ is $2\pi$.
	\end{enumerate}
	The set of all angle structures on a triangulation is denoted $\ang(\tri)$. The assignment of dihedral angles to the $i$--th ideal tetrahedron is notated as $\Theta^{(i)} = (\theta_{3i-2},\theta_{3i-1},\theta_{3i})$.
\end{definition}
Our definition is sometimes referred to as a \emph{strict} or \emph{positive} angle structure in the literature. There is no reason why $\ang(\tri)$ should be non-empty, however if it is non-empty, then it is the interior of a convex, compact polytope in $[0,\pi]^{3n}\subset\R^{3n}$.

Considering an ideal tetrahedron as a hyperbolic ideal tetrahedron -- the convex hull of four distinct points in $\partial\mathbb{H}^3$ -- the assignment of internal dihedral angles to pairs of opposite edges determines its shape up to isometry \cite{futer_angled_2011}. We denote the hyperbolic ideal tetrahedra with dihedral angles $\{\theta_{3i-2},\theta_{3i-1},\theta_{3i}\}$ as $\Delta(\theta_{3i-2},\theta_{3i-1},\theta_{3i})$.

\begin{theorem}
	Suppose $\theta_{3i-2},\theta_{3i-1},\theta_{3i}\in(0,\pi)$ and $\theta_{3i-2}+\theta_{3i-1}+\theta_{3i}=\pi$. Then these angles determine a hyperbolic ideal tetrahedron $\Delta(\theta_{3i-2},\theta_{3i-1},\theta_{3i})$ with volume
	\begin{equation*}
		\Vol(\Delta(\theta_{3i-2},\theta_{3i-1},\theta_{3i})) = \Lambda(\theta_{3i-2}) + \Lambda(\theta_{3i-1}) + \Lambda(\theta_{3i}),
	\end{equation*}
	where $\Lambda:\R\to\R$ is the Lobachevsky function
	\begin{equation*}
		\Lambda(\theta) = \int_0^\theta \log|2\sin u|du.
	\end{equation*}
\end{theorem}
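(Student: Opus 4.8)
The plan is the classical Lobachevsky--Milnor computation: realise the ideal tetrahedron explicitly in the upper half-space model $\mathbb{H}^3=\{(w,t):w\in\mathbb{C},\,t>0\}$ and evaluate its volume integral after a symmetric decomposition. First I would normalise, using an isometry of $\mathbb{H}^3$, so that three of the ideal vertices are $0,1,\infty$ and the fourth is a point $z$ in the open upper half-plane. The two faces meeting along the geodesic from $0$ to $\infty$ are the vertical half-planes over the Euclidean rays $0z$ and $01$, so the dihedral angle there equals the angle of the Euclidean triangle $\triangle(0,1,z)$ at the vertex $0$; the same holds along the geodesics from $1$ and from $z$ to $\infty$, and applying the observation after moving a different vertex to $\infty$ shows the three opposite edges carry the same three angles. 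Since the angles of $\triangle(0,1,z)$ lie in $(0,\pi)$ and sum to $\pi$, and prescribing such a triple determines $\triangle(0,1,z)$ up to similarity and hence $z$ up to the stabiliser of $\{0,1,\infty\}$, this both produces $\Delta(\theta_1,\theta_2,\theta_3)$ and shows it is unique up to isometry.

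For the volume, I would use invariance of the hyperbolic volume form $t^{-3}\,dx\,dy\,dt$ under the similarities $w\mapsto\lambda w$ to rescale so that $\triangle=\triangle(0,1,z)$ is inscribed in the unit circle centred at the origin. Then the face opposite $\infty$ is the hemisphere $|w|^2+t^2=1$, the solid tetrahedron is $\{(w,t):w\in\triangle,\ t\ge\sqrt{1-|w|^2}\}$, and integrating out $t$ reduces the volume to a planar integral,
\[
  \Vol\bigl(\Delta(\theta_1,\theta_2,\theta_3)\bigr)=\int_{\triangle}\frac{dA(w)}{2\,(1-|w|^2)} .
\]
Next I would exploit that the circumcentre of $\triangle$ is now the origin: joining it to the three vertices and to the three feet of the perpendiculars onto the sides partitions $\triangle$ into six right-angled triangles, congruent in pairs to $R(\theta_1),R(\theta_2),R(\theta_3)$, where $R(\phi)$ has vertices $0$, $\cos\phi$, $e^{i\phi}$ (the angle at the origin equals half the central angle subtending the relevant side, which is $\theta_i$). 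When $\triangle$ is obtuse the circumcentre lies outside $\triangle$ and this becomes a signed decomposition --- two pieces added, one subtracted --- but the bookkeeping is routine. Since the integrand is additive over the decomposition, everything reduces to evaluating $I(\phi):=\int_{R(\phi)}\frac{dA(w)}{2(1-|w|^2)}$.

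The hard part will be this building block. In polar coordinates $w=\rho e^{i\psi}$ one has $R(\phi)=\{0\le\psi\le\phi,\ 0\le\rho\le\cos\phi/\cos\psi\}$, and performing the $\rho$-integral, then using $\cos^2\psi-\cos^2\phi=\sin(\phi+\psi)\sin(\phi-\psi)$, turns $I(\phi)$ into an explicit linear combination of integrals of $\log|2\sin u|$. I expect this to collapse, via the Lobachevsky duplication identity
\[
  \Lambda(2\phi)=2\Lambda(\phi)-2\Lambda\!\left(\tfrac{\pi}{2}-\phi\right),
\]
to $I(\phi)=\tfrac12\Lambda(\phi)$; the identity itself I would prove by differentiating both sides (using $|2\sin 2\phi|=|2\sin\phi|\,|2\cos\phi|$) and matching the antiderivatives at $\phi=0$ via $\Lambda(0)=\Lambda(\tfrac\pi2)=0$. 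Summing the six pieces then gives
\[
  \Vol\bigl(\Delta(\theta_1,\theta_2,\theta_3)\bigr)=2I(\theta_1)+2I(\theta_2)+2I(\theta_3)=\Lambda(\theta_1)+\Lambda(\theta_2)+\Lambda(\theta_3),
\]
and additivity over the signed decomposition extends this to obtuse $\triangle$.

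Besides this integral, the only real care needed is the signed bookkeeping in the non-acute case; the set-up and the decomposition are essentially routine. If one prefers to avoid the explicit integration, an alternative is to differentiate $(\theta_1,\theta_2,\theta_3)\mapsto\Lambda(\theta_1)+\Lambda(\theta_2)+\Lambda(\theta_3)$ along the simplex $\{\theta_i>0,\ \sum\theta_i=\pi\}$, match it against the Schl\"afli variation formula for volume in its regularised form for ideal polyhedra, and fix the integration constant from the fact that both quantities vanish as some $\theta_i\to 0$ (the degenerate ``flat'' tetrahedron).
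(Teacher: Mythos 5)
The paper does not prove this statement at all: it is quoted as a classical result, with the reader directed to Futer--Gu\'eritaud, Milnor, and Purcell for proofs. Your outline is precisely Milnor's classical argument from those references, and it is correct. The chain of reductions all checks out: placing the vertices at $0,1,\infty,z$ identifies the dihedral angles with the angles of the Euclidean triangle $\triangle(0,1,z)$ and forces opposite edges to carry equal angles; integrating $t^{-3}\,dt$ over the region above the circumscribed hemisphere gives $\int_{\triangle}\tfrac{dA}{2(1-|w|^2)}$; the inscribed-angle theorem justifies the six-piece (signed, in the obtuse case) decomposition into copies of $R(\theta_i)$; and the polar-coordinate evaluation of $I(\phi)$, via $\cos^2\psi-\cos^2\phi=\sin(\phi+\psi)\sin(\phi-\psi)$ and the duplication identity, does collapse to $\tfrac12\Lambda(\phi)$ --- I have verified that the three resulting integrals combine as you predict. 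The only place requiring genuine care beyond what you wrote is the obtuse/right-angled case, where one must check that the formula for $I(\phi)$ persists for $\phi\geq\pi/2$ (or argue by continuity), but this is indeed routine. One remark worth making: your identity $I(\phi)=\tfrac12\Lambda(\phi)$ and the final positive volume hold for Milnor's convention $\Lambda(\theta)=-\int_0^\theta\log|2\sin u|\,du$; the paper's displayed definition omits the minus sign, which is a typo in the source (as stated it would make $\Vol$ negative), so your sketch in fact recovers the correct statement rather than the literal one printed here. Your closing alternative via the Schl\"afli variation formula is also a standard valid route.
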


\begin{definition}\label{defn:volume-functional}
	Let $M$ be a $3$-manifold with ideal triangulation $\tri$ consisting of $n$ ideal tetrahedra. The \emph{volume functional} $\mathcal{V}:\ang(\tri)\to\R$ is defined by
	\begin{equation*}
		\mathcal{V}(\Theta) = \sum_{i=1}^n \Vol(\Delta(\theta_{3i-2},\theta_{3i-1},\theta_{3i})).
	\end{equation*}
\end{definition}

We conclude with the following theorem, proved independently by Casson and Rivin, which follows from proofs in \cite{rivin_euclidean_1994}, and a direct consequence by applying it to Thurston's lower bound on complexity \Cref{eq:volc}.

\begin{theorem}
	Let $M$ be the interior of a compact, orientable $3$-manifold with boundary consisting of tori. Let $\tri$ be an ideal triangulation of $M$ and suppose $\ang(\tri)\neq\emptyset$. A strict angle structure $\Theta\in\ang(\tri)$ corresponds to a complete hyperbolic metric on $M$ if and only if $\Theta$ is the unique critical point of the volume functional $\mathcal{V}:\ang(\tri)\to\mathbb{R}$.
\end{theorem}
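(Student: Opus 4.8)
The statement is the Casson--Rivin characterization, and it is worth isolating what actually needs work: the uniqueness of the critical point is automatic once one knows $\mathcal V$ is strictly concave on $\ang(\tri)$, so the real content is the equivalence ``$\Theta$ is a critical point of $\mathcal V$'' $\iff$ ``$\Theta$ is the angular part of a solution of Thurston's gluing and completeness equations''. The plan is to prove exactly this, following Rivin. First I would record the analytic facts: the Lobachevsky function satisfies $\Lambda'(\theta)=-\log|2\sin\theta|$ and $\Lambda''(\theta)=-\cot\theta$, and the map $(\theta_1,\theta_2,\theta_3)\mapsto\Lambda(\theta_1)+\Lambda(\theta_2)+\Lambda(\theta_3)$ restricted to the plane $\theta_1+\theta_2+\theta_3=\pi$ is strictly concave. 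The latter is classical (Milnor): the Hessian of this restriction has determinant constantly equal to $1$ by the identity $\cot\theta_1\cot\theta_2+\cot\theta_2\cot\theta_3+\cot\theta_3\cot\theta_1=1$, so its signature is constant on the connected simplex and is read off as negative definite at the regular ideal tetrahedron. Since $\ang(\tri)$ is an open convex subset of an affine subspace and $\mathcal V$ is a sum of such terms, $\mathcal V$ is strictly concave on $\ang(\tri)$, which already yields the uniqueness half: $\mathcal V$ admits at most one critical point on $\ang(\tri)$, and such a critical point is necessarily its global maximum.

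Next I would describe the tangent space and compute $d\mathcal V$. A tangent vector to $\ang(\tri)$ is an assignment $w=(w_1,\dots,w_{3n})$ of real numbers to the $3n$ edge slots that sums to zero on each tetrahedron and sums to zero around each ideal edge class; these span the linear subspace cut out by the defining equations of $\ang(\tri)$, encoded by a constraint matrix $A$ with one row per tetrahedron and one row per edge class. For such $w$ one has $d\mathcal V(\Theta)\cdot w=\sum_{k}\Lambda'(\theta_k)\,w_k=-\sum_k w_k\log|2\sin\theta_k|$. Hence $\Theta\in\ang(\tri)$ is a critical point of $\mathcal V$ if and only if the vector $\bigl(\log|2\sin\theta_k|\bigr)_k$ is orthogonal to $\ker A$, equivalently if and only if it lies in the row span of $A$.

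The heart of the proof is to match this with Thurston's equations. For an ideal tetrahedron with angles $(\theta,\theta',\theta'')$ summing to $\pi$ I would introduce the shape parameter $z=e^{i\theta}\,\sin\theta''/\sin\theta'$ and its cyclic companions $1/(1-z)$ and $(z-1)/z$, so that the \emph{argument} of each edge invariant is the corresponding dihedral angle while its \emph{log-modulus} is a difference $\log\sin\theta''-\log\sin\theta'$ of two of the $\log\sin$ values. Thurston's edge-gluing equation at an ideal edge $\mathfrak e$ reads $\sum\log z_\bullet=2\pi i$, summed over the invariants incident to $\mathfrak e$; its imaginary part is precisely condition (3) in the definition of an angle structure, hence automatically satisfied, and its real part is a $\Z$-linear combination of the $\log\sin\theta_k$ whose coefficient vector is exactly the $\mathfrak e$-row of $A$. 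Therefore ``all real parts of the edge equations vanish'' is literally the condition that $\bigl(\log|2\sin\theta_k|\bigr)_k$ lie in the row span of $A$, i.e. that $\Theta$ be a critical point of $\mathcal V$. Finally, since $\partial M$ is a disjoint union of tori, a Neumann--Zagier--type rank count (the imaginary parts of the cusp equations are forced by those of the edge equations via the linear relations among them, and their real parts by the real parts of the edge equations) shows that once the edge equations hold the completeness equations hold automatically; combined with Thurston's theorem that a solution of the gluing plus completeness equations with all shapes in the upper half-plane determines the complete hyperbolic metric, this closes the equivalence. Conversely, the complete structure manifestly gives a positive angle structure that is a critical point by the same identities.

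I expect the main obstacle to be precisely the combinatorial bookkeeping in the third step: fixing the shape-parameter and logarithm conventions so that the real parts of the edge equations become, on the nose, the equations $\langle(\log|2\sin\theta_k|)_k,\,w\rangle=0$ over admissible tangent vectors $w$, and invoking the Neumann--Zagier duality between edge and cusp equations with the correct signs and multiplicities. None of this is conceptually deep, but it is where all the care is needed, and I would lean on the normalizations in \cite{futer_angled_2011} and \cite{rivin_euclidean_1994} rather than re-deriving them from scratch.
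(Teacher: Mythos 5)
First, a remark on the comparison itself: the paper offers no proof of this statement. It is quoted as the Casson--Rivin theorem with a pointer to \cite{rivin_euclidean_1994} and to the expositions in \cite{futer_angled_2011} and \cite{purcell_hyperbolic_2020}, so you are reconstructing an argument the paper only cites. Your first two steps are sound and match the standard treatment: the Hessian of the single-tetrahedron volume restricted to $\theta_1+\theta_2+\theta_3=\pi$ has constant determinant $1$ by the cotangent identity and is negative definite at the regular shape, hence everywhere on the connected open triangle, so $\mathcal{V}$ is strictly concave on the open convex set $\ang(\tri)$ and has at most one critical point; and $d\mathcal{V}(\Theta)\cdot w=-\sum_k w_k\log|2\sin\theta_k|$ for tangent vectors $w\in\ker A$ is correct (with the standard sign convention for $\Lambda$, which the paper's displayed definition accidentally negates).

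The third step, which you yourself identify as the heart of the matter, is however stated incorrectly in a way that breaks the deduction. Writing $\log z=i\theta+\log\sin\theta''-\log\sin\theta'$, the real part of the gluing equation at an edge $\mathfrak{e}$ is $\sum_j\bigl(\log\sin\theta_{j''}-\log\sin\theta_{j'}\bigr)$ over the corners $j$ incident to $\mathfrak{e}$. Its coefficient vector is the signed \emph{leading--trailing} vector $w_{\mathfrak{e}}$, not the $\mathfrak{e}$-row of $A$ (which is the nonnegative incidence vector of angle slots around $\mathfrak{e}$). The point of the Casson--Rivin argument is that $w_{\mathfrak{e}}$ lies in $\ker A$ --- it is a \emph{tangent} direction --- so that the vanishing of the real part of the $\mathfrak{e}$-equation is exactly the statement $d\mathcal{V}(\Theta)\cdot w_{\mathfrak{e}}=0$. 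Your inference ``all real parts vanish, hence $(\log|2\sin\theta_k|)_k$ lies in the row span of $A$'' therefore does not parse: what you obtain is criticality only in the directions spanned by the $w_{\mathfrak{e}}$, and the missing ingredient is the Neumann--Zagier rank statement that the $w_{\mathfrak{e}}$ together with the leading--trailing deformations along the cusp curves span all of $\ker A$. This also exposes the second error: the real parts of the completeness equations are \emph{not} forced by the real parts of the edge equations --- if they were, every geometric solution of the edge equations would be complete and hyperbolic Dehn filling would not exist. Completeness is forced by criticality in the additional cusp directions, which is precisely why the theorem requires a critical point over all of $\ang(\tri)$ and not merely over the edge directions. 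You gesture at the correct formulation (pairing against admissible tangent vectors $w$) in your closing paragraph, but the argument as written in the body would fail at exactly this step.
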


\begin{corollary}\label{cor:volumeestimate}
	Let $M$ be a hyperbolic $3$-manifold with ideal triangulation $\tri$. Suppose $\ang(\tri)$ is non-empty. For each $\Theta\in\ang(\tri)$,
	\begin{equation*}
		\mathcal{V}(\Theta) \leq \Vol(M).
	\end{equation*}
\end{corollary}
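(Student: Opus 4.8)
The final statement to prove is \Cref{cor:volumeestimate}: if $M$ is a hyperbolic $3$-manifold with ideal triangulation $\tri$ and $\ang(\tri)\neq\emptyset$, then $\mathcal{V}(\Theta)\leq\Vol(M)$ for every $\Theta\in\ang(\tri)$.

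This is a classical consequence of the Casson–Rivin theorem stated just above. The key inputs are: (1) the volume functional $\mathcal{V}$ is concave (in fact strictly concave) on the convex polytope $\ang(\tri)$; (2) $\ang(\tri)$ is the interior of a compact convex polytope, so $\mathcal{V}$ extends continuously to the compact closure $\overline{\ang(\tri)}$ and attains its maximum there; (3) by a result of Casson–Rivin (or Guéritaud–Futer), when $M$ is hyperbolic the maximum of $\mathcal{V}$ is not attained on the boundary $\partial\ang(\tri)$, hence the maximum is attained at an interior critical point; and (4) by the theorem quoted above, this interior critical point is the unique one and corresponds to the complete hyperbolic structure, so its volume equals $\Vol(M)$.

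Here is how I would assemble the proof. First, recall that $\Lambda$ is continuous on $\R$ and that $\mathcal{V}(\Theta)=\sum_{i=1}^n\bigl(\Lambda(\theta_{3i-2})+\Lambda(\theta_{3i-1})+\Lambda(\theta_{3i})\bigr)$ is therefore continuous on all of $[0,\pi]^{3n}$; restricting to the compact set $\overline{\ang(\tri)}$, it attains a maximum at some point $\Theta_{\max}$. Next, invoke concavity of $\mathcal{V}$: since $\Lambda''(\theta)=-\cot\theta$ is negative on $(0,\pi/2)$ and the standard computation shows the Hessian of $\mathcal{V}$ restricted to the linear constraints defining $\ang(\tri)$ is negative definite on the interior (see \cite{rivin_euclidean_1994,futer_angled_2011}), any critical point in the interior is the global maximum. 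Now apply the quoted theorem: because $M$ is hyperbolic with $\ang(\tri)\neq\emptyset$, the complete hyperbolic structure provides a strict angle structure $\Theta_0\in\ang(\tri)$ that is the unique critical point of $\mathcal{V}$, and by concavity $\Theta_0=\Theta_{\max}$ with $\mathcal{V}(\Theta_0)=\Vol(M)$ (the hyperbolic structure realises $M$ as a union of positively-oriented ideal tetrahedra whose volumes sum to $\Vol(M)$). Therefore for any $\Theta\in\ang(\tri)$ we have $\mathcal{V}(\Theta)\leq\mathcal{V}(\Theta_{\max})=\Vol(M)$, which is the claim.

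The main obstacle — or rather the one point requiring care — is the concavity/critical-point bookkeeping: strictly speaking one must know both that $\mathcal{V}$ has a critical point in the \emph{interior} $\ang(\tri)$ (not merely in the closure) and that concavity upgrades "unique interior critical point" to "global maximum over the closure". The first is exactly what the hyperbolicity hypothesis buys us through the quoted Casson–Rivin theorem, and the second follows from strict concavity of $\mathcal{V}$ along any line segment in $\ang(\tri)$, which is immediate from $\Lambda''<0$ near the relevant angles together with the linear-constraint structure. One should also note the boundary subtlety: a priori $\Theta_{\max}$ could lie on $\partial\ang(\tri)$, but if the interior critical point $\Theta_0$ exists then strict concavity forces $\mathcal{V}(\Theta_0)>\mathcal{V}(\Theta')$ for all $\Theta'\neq\Theta_0$ in the closure, ruling this out. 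With these observations in place the inequality $\mathcal{V}(\Theta)\leq\Vol(M)$ follows directly, and combining it with \Cref{eq:volc} gives the lower bound $c(M)\geq\mathcal{V}(\Theta)/v_3$ that will be exploited in \Cref{sec:min}.
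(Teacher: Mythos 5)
Your overall strategy (continuity on the compact closure, concavity, and the Casson--Rivin characterisation of critical points) is the right circle of ideas, and the paper itself offers no argument beyond a citation here, so there is no in-house proof to compare against. However, your proof contains a genuine gap at its central step. You assert that hyperbolicity of $M$ forces the maximum of $\mathcal{V}$ to be attained at an \emph{interior} critical point, and that ``the complete hyperbolic structure provides a strict angle structure $\Theta_0\in\ang(\tri)$ that is the unique critical point of $\mathcal{V}$''. Neither claim follows from the hypotheses. The quoted Casson--Rivin theorem is an equivalence about critical points \emph{when they exist}; it does not assert existence. An interior critical point of $\mathcal{V}$ exists if and only if the given triangulation $\tri$ is \emph{geometric}, and a hyperbolic $M$ with $\ang(\tri)\neq\emptyset$ need not have $\tri$ geometric: the maximum of $\mathcal{V}$ over $\overline{\ang(\tri)}$ can sit on the boundary of the polytope, with some tetrahedra degenerating. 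If your step were correct, every ideal triangulation admitting a strict angle structure would be geometric, which would in particular resolve the geometric-triangulation question flagged as open in the introduction of this paper. As written, your argument establishes the corollary only under the additional hypothesis that $\tri$ is geometric.

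The inequality $\mathcal{V}(\Theta)\leq\Vol(M)$ is nevertheless true in the stated generality, but its proof needs input beyond the critical-point characterisation: one straightens the triangulation with respect to the holonomy of the complete structure and invokes volume rigidity for representations, or else analyses the possible boundary maxima of $\mathcal{V}$ directly; see \cite{futer_angled_2011} and the references therein. Alternatively, for the triangulations actually used in this paper the gap can be closed by a different route, since Gu\'eritaud and Futer \cite{gueritaud_canonical_2006} proved that the Sakuma--Weeks triangulations are geometric, so the interior critical point you need does exist for them; but that is a theorem about these particular triangulations, not a consequence of $\ang(\tri)\neq\emptyset$ alone.
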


\section{Sakuma-Weeks triangulations of 2-bridge link complements}
\label{sec:SW}

We now outline the construction of the Sakuma-Weeks triangulations for the complements of $2$-bridge links and detail the combinatorics needed for our results. We follow the setup described in Section 2.1. Further details of this construction can be found in \cite{gueritaud_canonical_2006,purcell_hyperbolic_2020,sakuma_examples_1995}. Throughout this section we define $\Omega = R^{a_1}L^{a_2}\cdots(R^{a_n}\mid L^{a_n})$ with $K(\Omega)$ denoting the associated $2$-bridge link, where $(R^{a_n}\mid L^{a_n})$ denotes the regular expression determining either the string $R^{a_n}$ or $L^{a_n}$ which we assume is done appropriately. We assume that $a_i>0$ for all $1\leq i\leq n$ and that $a_1,a_n\geq 1$. We set $\ell = \sum_{i=1}^n a_i$. Let $M = S^3\backslash K(\Omega)$ denote the link complement. It is vital to first note that we consider a 2-bridge link as being embedded in $S^3$; we suppress this for most of what follows. 

\subsection{Construction of the Sakuma-Weeks triangulations}
Each crossing of the four-string braid determined by $\Omega$ occurs inside a product region homeomorphic to $S\times I$. We denote the complement of these regions as $S_i\times I$ for $1\leq i \leq \ell$, where $S_i$ is a four-punctured sphere and $I=[0,1]$. The Sakuma-Weeks triangulation is constructed by first creating isotopic ideal triangulations of $S_i\times\{1\}$ and $S_i\times\{0\}$ for each $i$. Each such triangulation contains three pairs of edges - vertical, horizontal and diagonal which follow the perspective shown in \Cref{fig:spheretriangulation}.

\begin{figure}[htbp]
	\centering
	\includegraphics[width=\textwidth]{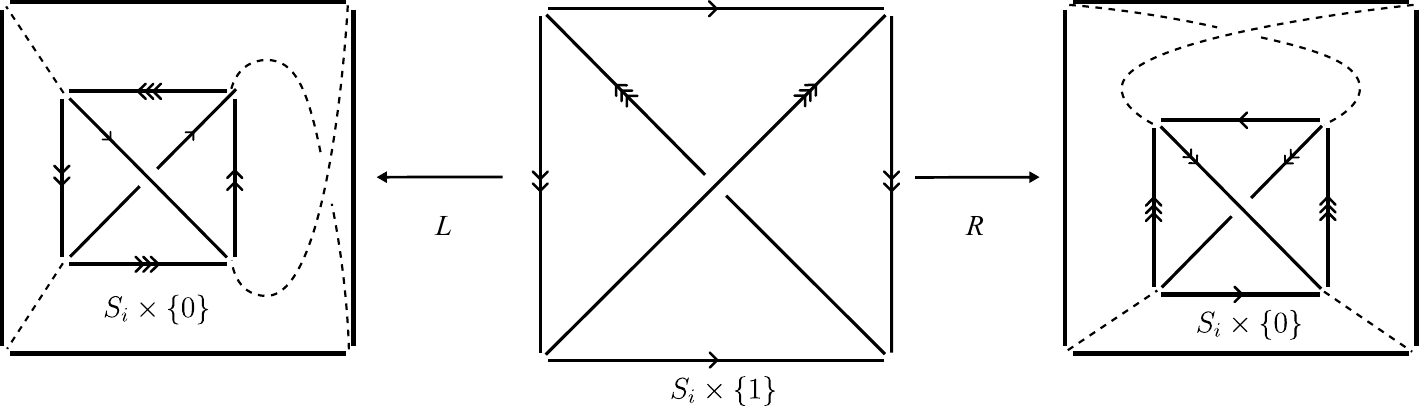}
	\caption{Ideal triangulation of $S_i\times\{1\}$ (outside) and $S_i\times\{0\}$ (inside) with orientations marked on each pair of edges. The complements of the four arcs define an isotopy between the ideal triangulations. Going from $S_i\times\{1\}$ to $S_i\times\{0\}$, a vertical twist $R$ exchanges vertical and diagonal edges and a horizontal twist $L$ exchanges horizontal and diagonal edges, as indicated by the braid arcs (dashed lines).}
	\label{fig:spheretriangulation}
\end{figure}

By gluing $S_i\times\{0\}$ and $S_{i+1}\times\{1\}$ along their horizontal and vertical edges we obtain two regions, each bounded by four ideal vertices, six ideal edges, and four triangular faces. This forms a pair of ideal tetrahedra for each $1\leq i\leq \ell-1$. We call each such pair of tetrahedra a \emph{layer} and denote this by $\widetilde{\Delta}_i$. These layers are glued together by identifying the diagonal edges in $S_{i+1}\times\{1\}$ to either the horizontal or vertical edges in $S_{i+1}\times\{0\}$ for $1\leq i\leq \ell-2$. This process is shown in \Cref{fig:tetrahedralayer}. There remains four unidentified faces in $\widetilde{\Delta}_1$ and four unidentified faces in $\widetilde{\Delta}_{\ell-1}$ coming from $S_{1}\times\{0\}$ and $S_{\ell}\times\{1\}$, respectively. In order to complete the triangulation of the complement of the $2$-bridge link $K(\Omega)$ we identify the four remaining ideal triangles on $S_1\times\{0\}$ in pairs following the isotopy defined by the crossing in $S_1$ and the added exterior crossing. Similarly, for the four remaining faces on $S_{\ell}\times\{1\}$, we follow the isotopy defined by the crossing in $S_{\ell}$ and the added interior crossing. The identification on $S_{\ell-1}\times\{0\}$ is shown in \Cref{fig:closingoff}. Since we assume that the $\Omega$ starts with $R^{a_1}$ we can note that the ideal triangles on $S_1\times\{1\}$ are identified similarly to those on $S_{\ell-1}\times\{0\}$ in the case where $\Omega$ ends with $R^{a_n}$. The resulting triangulation $\tri=\tri(M)$ consists of layers $\widetilde{\Delta}_1,\dots,\widetilde{\Delta}_{\ell-1}$, each consisting of two ideal tetrahedra $\Delta_{2i-1}$ and $\Delta_{2i}$ for $1\leq i\leq \ell-1$. This gives the size of the Sakuma-Weeks triangulation as $|\tri| = 2(\ell-1) = 2\sum_{i=1}^n a_i - 2.$

\begin{figure}[h]
	\centering
	\includegraphics[scale=0.9]{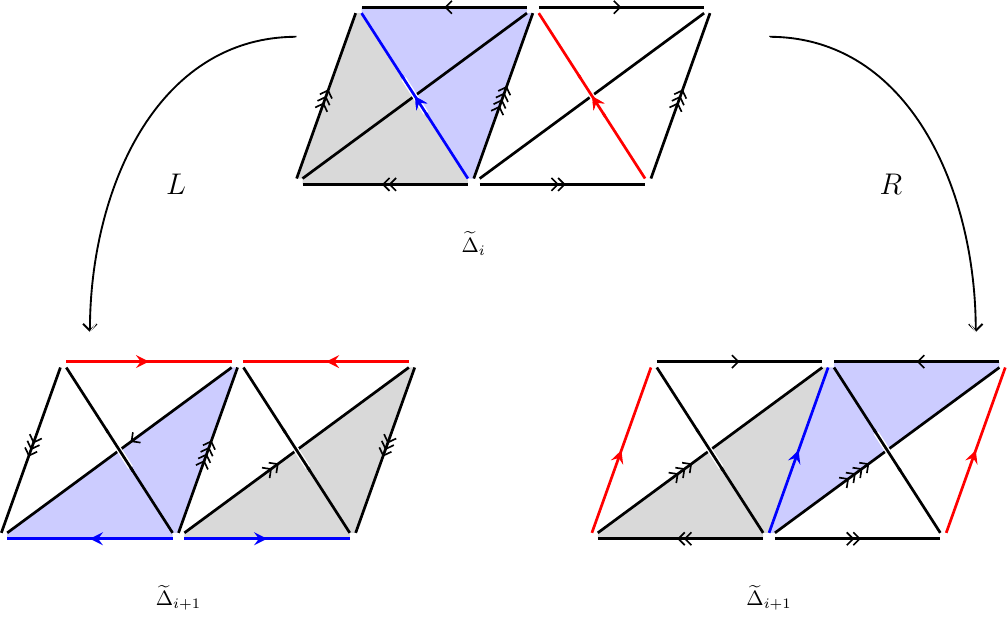}
	\caption{The process of gluing the layers of the Sakuma-Weeks triangulation together. Each layer consists of two ideal tetrahedra glued along their horizontal and vertical edges. The four faces on the back of $\widetilde{\Delta_i}$ lie on $S_i\times\{0\}$ and the four faces on the front lie on $S_{i+1}\times\{1\}$ and similarly for $\widetilde{\Delta_{i+1}}$. We have highlighted two faces on $S_{i+1}\times\{1\}$ and the faces they glue to in $S_{i+1}\times\{0\}$.}
	\label{fig:tetrahedralayer}
\end{figure}

\begin{figure}[h]
	\centering
	\begin{subfigure}[t]{\textwidth}
		\centering
		\includegraphics[scale=0.9]{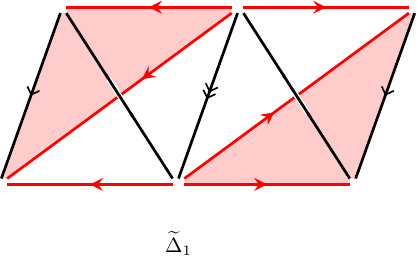}
		\caption{Identifications of ideal triangles on $S_{1}\times\{0\}$. The shaded faces are identified and the unshaded faces are identified.}
		\label{fig:closing-off-a}
	\end{subfigure}
	
	\bigskip
	
	\begin{subfigure}[t]{\textwidth}
		\centering
		\includegraphics[scale=0.9]{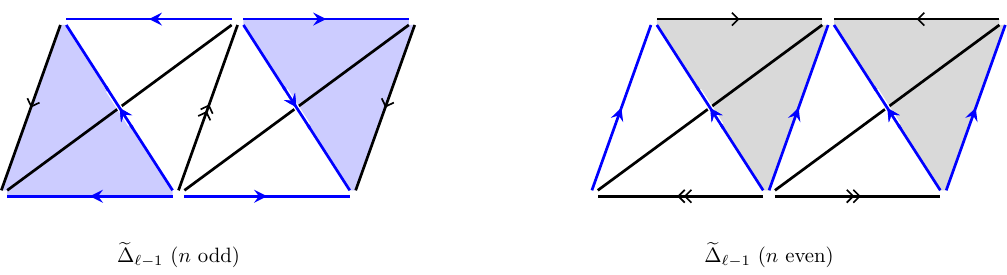}
		\caption{Identifications of ideal triangles on $S_{\ell}\times\{1\}$ when $\Omega$ ends with $R$ (left) and $L$ (right). For each case, the shaded faces are identified and the unshaded faces are identified.}
		\label{fig:closing-off-b}
	\end{subfigure}
	
	\caption{Identifications of remaining ideal triangles in $\widetilde{\Delta}_1$ and $\widetilde{\Delta}_2$. For each layer shown, the back faces belong to $S_i\times\{0\}$ and the front faces belong to $S_{i}\times\{1\}$. Note that no two faces of a single ideal tetrahedron are identified.}
	\label{fig:closingoff}
\end{figure}

As a final note on the construction of the Sakuma-Weeks triangulation, we can observe that the gluing of two layers together forms, in essence, a layered triangulation. In \Cref{fig:tetrahedralayer} we layer the blue and red diagonal edges in $\widetilde{\Delta}_i$ over opposite pairs of horizontal or vertical edges in $\widetilde{\Delta}_{i+1}$ for horizontal ($L$) and vertical ($R$) twists, respectively.

\subsection{Edge Degrees}
The Sakuma-Weeks triangulations of $2$-bridge link complements possess many attractive properties; the construction can be read directly from the link diagram or done algorithmically by reading the word $\Omega$. Our interest lies in computing the degree of each edge class in the triangulation $\tri=\tri(M)$.

The isotopy defined by the crossing between $S_i\times\{1\}$ and $S_i\times\{0\}$ combined with the gluing of $S_i\times\{0\}$ and $S_{i+1}\times\{1\}$ along vertical and horizontal edges allows us to determine each edge class of $\tri$ explicitly and hence compute their degrees. \Cref{fig:tetrahedralayer} shows the tracking of edges through the triangulation with \Cref{fig:closingoff} showing the edge identifications coming from the gluing of faces on $S_1\times\{0\}$ (\Cref{fig:closing-off-a}) and $S_\ell\times\{1\}$ (\Cref{fig:closing-off-b}). We make two immediate observations from these figures. 

First, there are no edges of degrees 1 or 2 in $\tri$. Each layer $\widetilde{\Delta}_i$ consists of two ideal tetrahedra glued along four edges. The corresponding edge classes must have degree of at least 2. Each such edge class also includes at least one diagonal edge. Hence the minimum edge degree is 3. Second, opposite edges in any layer (e.g. horizontal) belong to edge classes of the same degree. This follows since the isotopies are the same on opposite pairs of edges, up to preserving orientation. 

The following lemma provides the number of edges in $\tri$. This result is not new and can be found, for example, in \cite{futer_angled_2011}.

\begin{lemma}\label{lem:numberofedges}
	Let $M$ be the interior of a compact $3$-manifold $\widehat{M}$ with $\partial\widehat{M}\neq\emptyset$ consisting only of a finite number of tori. Let $\tri$ be a topologically ideal triangulation of $M$. Then $|\tri^{(1)}| = |\tri|$, where $\tri^{(1)}$ denotes the one-skeleton of $\tri$. 
\end{lemma}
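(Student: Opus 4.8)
The plan is a two-way Euler characteristic count, the only substantive geometric input being that every vertex link is a torus. Write $T=|\tri|$ for the number of tetrahedra, $F=|\tri^{(2)}|$ for the number of faces, $E=|\tri^{(1)}|$ for the number of edge classes, and $V=|\tri^{(0)}|$ for the number of ideal vertices; since $\tri$ is topologically ideal there is one vertex per cusp, so $V$ equals the number of boundary tori of $\widehat{M}$. First I would record the face count: because $M$ has empty boundary, the $4T$ tetrahedron-faces of $\tri$ are identified in pairs, so $2F=4T$, i.e.\ $F=2T$. Hence the Euler characteristic of the compact pseudomanifold underlying $\tri$ (with its vertices included) is
\[
\chi = V - E + F - T = V - E + T .
\]

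Next I would compute the same $\chi$ via the boundary. Since $\widehat{M}$ is a compact orientable $3$-manifold, $\chi(\widehat{M})=\tfrac12\chi(\partial\widehat{M})=0$ because $\partial\widehat{M}$ is a disjoint union of tori. The underlying space of $\tri$ is obtained from $\widehat{M}$ by coning each boundary torus to a point; as a cone is contractible and a torus has Euler characteristic $0$, inclusion--exclusion gives $\chi=\chi(\widehat{M})+\sum_{\text{tori}}(1-0)=V$. Comparing the two expressions for $\chi$ yields $V=V-E+T$, hence $E=T$, which is exactly $|\tri^{(1)}|=|\tri|$.

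A self-contained variant, avoiding the identity $\chi(\widehat M)=\tfrac12\chi(\partial\widehat M)$, argues directly with the triangulated vertex links. Each tetrahedron contributes one small triangle to each of its four corner links, so the vertex links contain $4T$ triangles in total; each face of $\tri$ meets three vertices and contributes one edge to each corresponding link, giving $3F=6T$ edges in total; and each edge class meets its two endpoints and contributes one vertex to the link at each end, giving $2E$ vertices in total. Therefore $\sum_{v}\chi(\mathrm{lk}(v))=2E-6T+4T=2E-2T$, and since every $\mathrm{lk}(v)$ is a torus this sum vanishes, so once more $E=T$.

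I do not expect a genuine obstacle here: the argument is elementary, and the only care required is in the bookkeeping conventions for generalised ideal triangulations — faces identified in pairs, edge classes versus individual tetrahedron edges, and vertex links as closed surfaces — together with making sure the ``one vertex per cusp'' and ``torus boundary'' hypotheses are invoked precisely where the Euler characteristic $0$ of the links enters.
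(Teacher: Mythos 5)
Your proof is correct, and your second variant is precisely the paper's argument: the vertex links are the boundary tori obtained by truncating the ideal tetrahedra, and the count $2E-6T+4T=0$ is exactly the paper's computation $\chi(\partial\widehat{M})=v-6|\tri|+4|\tri|=0$ followed by identifying link vertices with edge-ends. The first variant via the global Euler characteristic of the coned-off pseudomanifold is an equivalent repackaging of the same idea, so no substantive difference from the paper.
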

\begin{proof}
	The triangulation of the boundary tori is obtained by truncating the ideal tetrahedra in $M$, giving $4|\tri|$ triangles along with $12|\tri|$ edges identified in pairs for a total of $6|\tri|$ edges. From the Euler characteristic we have
	\begin{equation*}
		0 = \chi(\partial\widehat{M}) = v - 6|\tri| + 4|\tri|,
	\end{equation*}
	giving $2|\tri|$ vertices. Since each pair of vertices corresponds to a single edge in $\tri$ we obtain the desired equality.
\end{proof}

Minimal triangulations of $3$-manifolds often place restrictions on the possible anatomy of the triangulation. One key restriction is the presence of low-degree edges \cite{jaco_minimal_2020,jaco_minimal_2009,jaco_2thurston_2020,jaco_mathbb_2013}. We conclude this section with necessary and sufficient conditions for the Sakuma-Weeks triangulation to possess edges of degree 3 or 4.

\begin{lemma}\label{lem:degree3}
	Let $K(\Omega)$ be the $2$-bridge link generated by the word $\Omega=R^{a_1}L^{a_2}\cdots(R^{a_n}\mid L^{a_n})$. Let $\tri=\tri(M)$ be the Sakuma-Weeks triangulation of the complement $M=S^3\backslash K(\Omega)$. $\tri$ contains an edge of degree 3 if and only if $a_1>1$ or $a_n>1$.
\end{lemma}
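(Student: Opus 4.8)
The plan is to read the edge degrees directly off the combinatorics of the layered construction, as suggested by the two figures referenced in the text. First I would set up the bookkeeping: each layer $\widetilde{\Delta}_i$ ($1\le i\le \ell-1$) contributes two tetrahedra glued along a vertical pair and a horizontal pair of edges, and the diagonal pair of $\widetilde{\Delta}_i$ is layered onto a vertical or horizontal pair of $\widetilde{\Delta}_{i+1}$ depending on whether the corresponding twist in $\Omega$ is $R$ or $L$. The key local fact (already noted in the text) is that within a single layer an edge class has degree $\geq 3$, with equality exactly when the class consists of the two glued edges of the layer together with precisely one diagonal edge and nothing else — i.e.\ when the relevant diagonal edge is \emph{not} re-used as a gluing edge in an adjacent layer or by the closing-off identifications of \Cref{fig:closingoff}.

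The main step is then a case analysis on where ``fresh'' diagonal edges appear. Inside a single syllable $R^{a_i}$ (respectively $L^{a_i}$), consecutive twists of the same type repeatedly swap the diagonal with the vertical (resp.\ horizontal) edges, so a diagonal edge created in the interior of a long syllable is immediately consumed as a gluing edge of the next layer, forcing degree $\geq 4$ there; this is where the condition $a_i\in\{1,2\}$ versus $a_i>2$ will matter for \Cref{lem:degree3}'s companion statement, but for degree $3$ specifically the relevant place to look is the two ends of the word. I would show that when $a_1=1$ the opening twist together with the closing-off identification on $S_1\times\{1\}$ (\Cref{fig:closingoff}, which applies at the $R$-end by the remark preceding \Cref{lem:numberofedges}) forces every edge touching the first layer into a class of degree $\geq 4$, and symmetrically for $a_n=1$ at the other end; conversely, when $a_1>1$ the first two twists are both $R$, so the diagonal edge of $\widetilde{\Delta}_1$ is layered onto the vertical edges of $\widetilde{\Delta}_2$ while its \emph{own} two gluing edges (vertical and horizontal within the layer) pick up exactly one diagonal edge and close up after degree $3$ — producing an explicit degree-$3$ edge. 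The same argument with $R\leftrightarrow L$ handles $a_n>1$.

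The ``only if'' direction requires checking that if $a_1=a_n=1$ then no edge anywhere has degree $3$: interior syllables contribute their own gluing-edge classes which, by the layering, always absorb diagonal edges from both the preceding and following layers (pushing degree to $\geq 4$), and the boundary layers are handled by the closing-off identifications as above. The main obstacle I anticipate is purely organisational: making the ``an edge class is exactly \{two gluing edges\} $\cup$ \{one diagonal\}'' criterion precise and tracking, without the aid of the figures, exactly which diagonal edges get reused — this is a finite but fiddly chase through \Cref{fig:tetrahedralayer} and \Cref{fig:closingoff}, and the cleanest writeup is probably to label the six edge types per layer and give the explicit transition rule for how a degree count propagates from layer $i$ to layer $i+1$, then read off the two statements. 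Once that transition rule is stated, both directions of the biconditional follow by inspection of the first and last layers.
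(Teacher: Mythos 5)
Your proposal follows essentially the same route as the paper's proof: reduce the search for degree-$3$ classes to the two boundary layers, then case-analyse $a_1=1$ versus $a_1>1$ (and symmetrically $a_n$) by tracking whether the gluing edges of $\widetilde{\Delta}_1$ absorb a vertical pair (degree forced up) or a single diagonal (degree $3$) from $\widetilde{\Delta}_2$. The only substantive difference is in how the reduction to the boundary is justified: the paper observes that any edge class containing two diagonal edges has even degree, so an odd-degree (hence degree-$3$) class must involve the closing-off identifications, whereas you argue that every interior gluing-edge class absorbs a diagonal from both the preceding and the following layer and so has degree at least $4$; these are equivalent in content. One correction: when $a_1>1$ it is only the \emph{vertical} gluing-edge classes of $\widetilde{\Delta}_1$ that close up at degree $3$. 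The horizontal edges are fixed by the $R$-twists, so they persist as gluing edges through the whole syllable $R^{a_1}$ and are moreover merged by the closing-off identification on $S_1\times\{1\}$ into a class of considerably higher degree; your parenthetical claim that both the vertical and the horizontal gluing edges of the first layer yield degree-$3$ classes is therefore false. This does not affect the statement, since a single degree-$3$ edge suffices for the existence direction, but it should be removed from a final write-up.
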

\begin{proof}
	Any edge class containing two diagonal edges must have even degree. This can be determined from \Cref{fig:tetrahedralayer,fig:closingoff}. Hence any edge class with odd degree must contain either the vertical edges from $\Delta_1$ or the horizontal or vertical edges from $\widetilde{\Delta}_{\ell-1}$ if the last syllable of $\Omega$ is $L^{a_n}$ or $R^{a_n}$, respectively.
	
	If $a_1=1$, then the first two letters of $\Omega$ are $RL$ and we see that the vertical edges in $\widetilde{\Delta}_1$ are glued to vertical edges in $\widetilde{\Delta}_2$. If $a_1>1$, however, then the vertical edges in $\widetilde{\Delta}_1$ are glued to distinct diagonal edges in $\widetilde{\Delta}_2$ giving edge classes of degree 3. The argument for $a_n>1$ is analagous, noting that the last letter of $\Omega$ only determines how the remaining four faces are identified on the last layer.
\end{proof}

\begin{lemma}\label{lem:degree4}
	Let $K(\Omega)$ be the $2$-bridge link generated by the word $\Omega=R^{a_1}L^{a_2}\cdots(R^{a_n}\mid L^{a_n})$. Let $\tri=\tri(M)$ be the Sakuma-Weeks triangulation of the complement $M=S^3\backslash K(\Omega)$. $\tri$ contains an edge of degree 4 if and only if $a_i\geq 2$ for some $1< i< n$ or $a_i\geq 3$ for $i\in\{1,n\}$.
\end{lemma}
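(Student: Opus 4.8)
The plan is to imitate the proof of \Cref{lem:degree3}, now hunting for edge classes consisting of exactly four edges. I would organise the edge classes of $\tri$ according to the layered picture of \Cref{fig:tetrahedralayer,fig:closingoff}: call an edge class a \emph{fan edge} if it is supported inside a single twist region $R^{a_i}$ (resp.\ $L^{a_i}$), a \emph{hinge edge} if it straddles a transition $R^{a_i}L^{a_{i+1}}$ between two consecutive syllables, and an \emph{end edge} if it meets one of the two closing-off identifications of \Cref{fig:closingoff}. Every edge class is of one of these three types. I would then use the two facts established above --- opposite edges of a layer lie in edge classes of the same degree, and an edge class containing two diagonal edges has even degree --- to cut down the case analysis. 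The heart of the proof is to identify the degree-$4$ classes with the fan edges coming from the interior layers of a twist region that are \emph{not} adjacent to a closing-off.

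For the ``if'' direction I would exhibit a single degree-$4$ fan edge. A twist region of length $a\geq 2$ has $a-1$ interior layers, each glued to its neighbours in the homogeneous twist pattern of \Cref{fig:tetrahedralayer}; tracing the edge class through a diagonal edge of such a layer shows it contains two diagonal edges (so its degree is even) and a direct count of its representatives in the layers it meets gives exactly four. If instead this interior layer abuts the closing-off identification of \Cref{fig:closingoff} --- which happens only in the first or last twist region, and then only for the layer adjacent to the closing-off --- one representative is absorbed and the degree drops to $3$; this recovers the degree-$3$ edge of \Cref{lem:degree3}. Hence a degree-$4$ fan edge exists precisely when some interior syllable satisfies $a_i\geq 2$, or the first or last syllable satisfies $a_i\geq 3$, which is one implication.

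For the converse I would assume $a_i=1$ for all $1<i<n$ and $a_1,a_n\leq 2$ and rule out degree $4$ for every edge class. Under this hypothesis no interior twist region has length $\geq 2$, so every fan edge lies in the first or last twist region, occurs only when $a_1=2$ or $a_n=2$, and has degree $3$ by the computation above. For the remaining (hinge and end) classes I would trace them through the layers on both sides of the relevant transition, as in \Cref{fig:tetrahedralayer,fig:closingoff}: when the adjacent syllables are short (length at most $2$, or $a_1=1$ resp.\ $a_n=1$ at an end), the class runs through enough layers that a direct count of its representatives, combined with the evenness forced by its two diagonal edges, gives degree at least $6$ (in any case it is at least $5$). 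So every edge class of $\tri$ has degree $3$ or degree $\geq 5$, never $4$.

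I expect the converse to be the main obstacle, specifically the claim that the hinge edges and end edges have degree at least $5$ whenever the neighbouring syllables are small. This is a finite local check around each transition $R^{a_i}L^{a_{i+1}}$ and around the two closing-offs, but it requires carefully following the orientations placed on the three pairs of edges of each four-punctured sphere in \Cref{fig:spheretriangulation} --- these determine whether a diagonal edge is layered over a vertical or over a horizontal edge in the next layer --- and then tracking each class through several layers without miscounting its representatives. The ``if'' direction is comparatively easy, since it only asks for one short fan edge whose length can be read directly off \Cref{fig:tetrahedralayer}.
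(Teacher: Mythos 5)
Your proposal is correct and follows essentially the same route as the paper: the paper's proof likewise reads off from \Cref{fig:tetrahedralayer,fig:closingoff} that a degree-4 class must realise the pattern diagonal--(vertical/horizontal pair)--diagonal, which forces the same crossing to be repeated, with the closing-off identifications supplying no extra diagonal at the two ends (whence the threshold $a_i\geq 3$ for $i\in\{1,n\}$). The only difference is presentational: where you set up a fan/hinge/end trichotomy and an exhaustive converse check (whose finite local verification you rightly flag as the remaining work), the paper disposes of the converse with the single observation that no other gluing pattern can produce degree $4$.
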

\begin{proof}
	From \Cref{fig:tetrahedralayer,fig:closingoff} we observe the only way to form an edge class of degree 4 is to glue edges in the sequence diagonal--vertical/horizontal--diagonal. This sequence is only obtainable by performing the same crossing at least twice in succession, with the first diagonal coming from an ideal tetrahedron in the previous layer. Hence we require $a_i\geq 2$ for some $1<i<n$. The first letter of $\Omega$ introduces the first diagonal edges to start this sequence whilst the last letter does not introduce any new ideal tetrahedra. Hence we require that $a_1\geq 3$ or $a_n\geq 3$ for an edge class of degree 4 to contain an edge in $\widetilde{\Delta}_1$ or $\widetilde{\Delta}_{\ell-1}$.
\end{proof}

\section{Non-minimality via elementary moves}
\label{sec:nonmin}

\subsection{A necessary condition for minimality}
In order to show that a triangulation is non-minimal one could simply find a different triangulation possessing fewer tetrahedra. This is often hard. A common approach is to locally change the given triangulation in such a way that we decrease the number of tetrahedra whilst simultaneously preserving the manifold. We are particularly interested in the local modifications called the Pachner 3--2 move and the 4--4 move.

For the Pachner 3--2 move consider a triangulation of a triangular bipyramid consisting of three distinct tetrahedra glued along an internal edge of degree 3. The Pachner 3--2 move replaces this triangulation with one consisting of two distinct tetrahedra glued along an internal triangle (\Cref{subfig:pachner32move}). For the 4--4 move, consider an octahedron triangulated with four distinct tetrahedra glued along an internal edge of degree 4. This edge is realised as a main diagonal of the octahedron which can occur in three distinct ways giving rise to three distinct triangulations of the octahedron. The 4--4 move replaces one of these triangulations with one of the other two \cite{burton_computational_2013}. Crucially, the 4--4 move adjusts the edge degrees of each edge class incident to the tetrahedra by either $\pm1$ or $0$. The edges whose degree decreases are indicated in \Cref{subfig:aggregate44move}.

\begin{figure}[h]
	\centering
	\begin{subfigure}[t]{0.49\textwidth}
		\centering
		\includegraphics[width=0.9\textwidth]{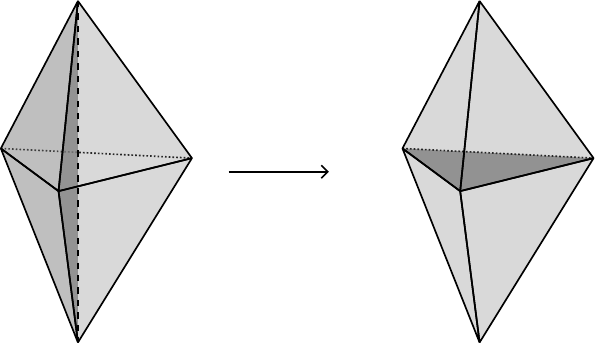}
		\caption{The Pachner 3-2 move.}
		\label{subfig:pachner32move}
	\end{subfigure}
	\begin{subfigure}[t]{0.49\textwidth}
		\centering
		\includegraphics[width=0.9\textwidth]{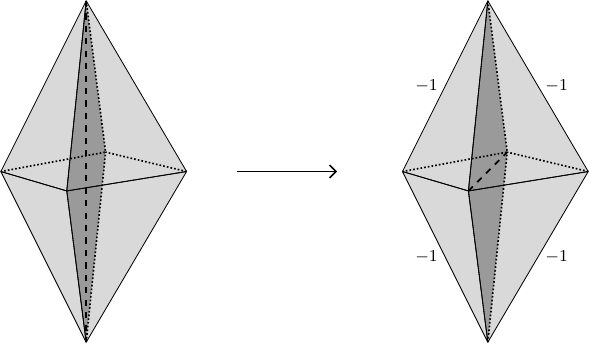}
		\caption{The 4-4 move with negative changes in edge degrees marked.}
		\label{subfig:aggregate44move}
	\end{subfigure}
	\caption{Two elementary moves.}
\end{figure}

\begin{theorem}\label{thm:nonminimal}
	Let $K(\Omega)$ be the $2$-bridge link generated by the word $\Omega = R^{a_1}L^{a_2}\cdots(R^{a_n}\mid L^{a_n})$. Let $\tri = \tri(M)$ be the Sakuma-Weeks triangulation of the complements $S^3\backslash K(\Omega)$. $\tri$ is minimal only if $a_1=a_n = 1$ and $a_i\in\{1,2\}$ for $1<i<n$ with $n\geq 2$.
\end{theorem}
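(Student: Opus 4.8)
The plan is to prove the contrapositive. Since $K(\Omega)$ is hyperbolic we have $n\geq 2$ (Menasco), so if the stated condition fails then at least one of the following holds: \textbf{(a)} $a_1>1$, \textbf{(b)} $a_n>1$, or \textbf{(c)} $a_i\geq 3$ for some $1<i<n$. In each case I want to exhibit a finite sequence of Pachner/elementary moves taking $\tri$ to a triangulation of $M=S^3\backslash K(\Omega)$ with strictly fewer tetrahedra; since every such move preserves $M$, this forces $c(M)<|\tri|$, i.e. $\tri$ is not minimal.

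Cases (a) and (b) are quick. By \Cref{lem:degree3}, $\tri$ has an edge $e$ of degree $3$, and the first step is to read off from \Cref{fig:tetrahedralayer} and \Cref{fig:closingoff} that the three tetrahedra incident to $e$ are pairwise distinct: when $a_1>1$ the edge class $e$ consists of a vertical edge of the layer $\widetilde{\Delta}_1$ together with a diagonal edge of $\widetilde{\Delta}_2$, so its three incidences are distributed over the two distinct layers $\widetilde{\Delta}_1,\widetilde{\Delta}_2$; the case $a_n>1$ is symmetric via the closing identifications on $\widetilde{\Delta}_{\ell-1}$. Having verified distinctness, the Pachner $3$--$2$ move of \Cref{subfig:pachner32move} applies at $e$ and produces a triangulation of $M$ with $|\tri|-1$ tetrahedra.

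Case (c) is the one that genuinely needs the $4$--$4$ move, since we may assume $a_1=a_n=1$ (otherwise (a) or (b) applies) and then, by \Cref{lem:degree3}, $\tri$ has no degree-$3$ edge at all. The plan here has three steps. First, using \Cref{lem:degree4} and a closer look at how edges are tracked through \Cref{fig:tetrahedralayer}, locate in the run of $a_i\geq 3$ identical twists a degree-$4$ edge $e$ with embedded octahedral neighbourhood such that one of the four edges of this octahedron whose degree a $4$--$4$ move decreases (the edges marked in \Cref{subfig:aggregate44move}) is itself an edge of degree $4$; call it $e'$. Second, perform the $4$--$4$ move at $e$ that decreases $e'$: this leaves $|\tri|$ unchanged but turns $e'$ into an edge of degree $3$. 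Third, check that the three tetrahedra now incident to $e'$ are pairwise distinct and apply the $3$--$2$ move at $e'$, arriving at a triangulation of $M$ with $|\tri|-1$ tetrahedra. (For larger $a_i$ one can instead chain several $4$--$4$ moves, an ``aggregate'' $4$--$4$ move, before reducing; but a single $4$--$4$ move followed by a single $3$--$2$ move already suffices.)

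The main obstacle is the local combinatorial analysis underlying step (c): one has to show that a run $a_i\geq 3$ of identical twists (all $R$, or all $L$) creates two suitably positioned degree-$4$ edges $e,e'$ — as opposed to a single, ``isolated'' degree-$4$ edge, which is precisely what $a_i=2$ yields and which \Cref{conj:minimal} predicts is still compatible with minimality — and one then has to confirm that all of the subcomplexes used are genuinely embedded: the triangular bipyramids for the $3$--$2$ moves, the octahedron for the $4$--$4$ move, and the bipyramid that the $4$--$4$ move creates. Embeddedness is where one must watch out for small triangulations, but in case (c) the hypothesis already forces $n\geq 3$ and $\ell\geq 5$, hence $|\tri|=2(\ell-1)\geq 8$, which is comfortably large; in cases (a) and (b) the bound $n\geq 2$ gives $\ell\geq 3$ and $|\tri|\geq 4$, and the split-over-two-layers description above pins down distinctness directly.
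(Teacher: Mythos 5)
Your proposal is correct and follows essentially the same route as the paper: degree-$3$ edges from $a_1>1$ or $a_n>1$ are removed by a Pachner $3$--$2$ move, and a syllable with $a_i\geq 3$ is handled by locating two incident degree-$4$ edges, performing a $4$--$4$ move to drop one of them to degree $3$, and then applying a $3$--$2$ move. The ``main obstacle'' you identify in case (c) — exhibiting the two suitably positioned degree-$4$ edges and verifying that the octahedron and the resulting three tetrahedra are genuinely distinct — is precisely what the paper supplies explicitly via \Cref{fig:layersfor44move}, so your plan is sound as stated.
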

Ishikawa and Nemoto \cite{ishikawa_construction_2016} provide an upper bound on the complexity of the complements of hyperbolic $2$-bridge links as 
\begin{equation*}
	c(M) \leq \sum_{i=1}^n a_i + 2(n-1) - \#\{a_i = 1\},
\end{equation*}
where we have adjusted the formula for our notation of $\Omega$. This result agrees with \Cref{thm:nonminimal} and quantifies a lower bound on the amount of ideal tetrahedra that can be removed.
\begin{proof}
	If $a_1>1$ or $a_n>1$, then there is a degree 3 edge by \Cref{lem:degree3}. By the construction of the Sakuma-Weeks triangulation we know that no two faces of a single ideal tetrahedron are identified and there must be three distinct ideal tetrahedra glued around this edge. Applying a Pachner 3--2 move reduces the size of the triangulation by one and thus $\tri$ is not minimal. 
	
	Suppose now that $a_i\geq 3$ for some $1\leq i\leq n$ and that the corresponding syllable in $\Omega$ is $R^{a_i}$. There are two degree 4 edges, $\edge$ and $\edge'$, incident to each other. \Cref{fig:layersfor44move} shows the four layers of $\tri$ containing $\edge$ (shown in red) and $\edge'$ (shown in blue) with the four ideal tetrahedra glued around $\edge$ shaded. We see that $\edge'$ is contained in this octahedron. Applying a 4--4 move replacing $\edge$ with $\widehat{\edge}$ in the octahedron so that $\widehat{\edge}$ and $\edge'$ are not incident reduces the degree of $\edge'$ by one giving $\operatorname{deg}(\edge') = 3$. Observe that the two faces incident to $\edge'$ now belong to the same ideal tetrahedron and, as we can deduce from \Cref{fig:layersfor44move}, there are two ideal tetrahedra glued around $\edge'$ not contained in the octahedron. Hence there are three distinct tetrahedra glued around $\edge'$ after performing the 4--4 move and we can apply a Pachner 3--2 move to reduce the size of the triangulation by one and thus $\tri$ is not minimal.
	
	Finally, we note that if $a_i\in\{1,2\}$ for all $1\leq i\leq n$ then no two degree 4 edges will be incident to each other in the triangulation and thus we cannot perform a sequence of 4--4 moves to create a degree 3 edge in order to simplify the triangulation.
	\begin{figure}[h]
		\centering
		\includegraphics[scale=0.8]{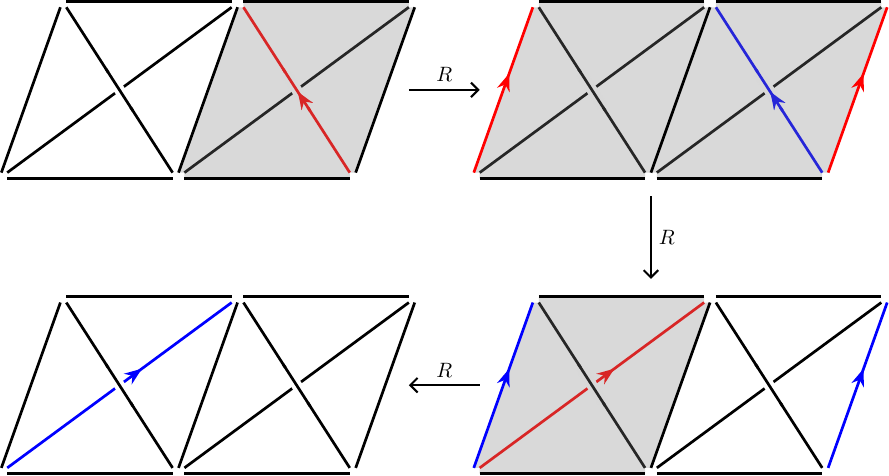}
		\caption{The four layers of $\tri$ containing incident degree 4 edges corresponding to $R^3$ in $\Omega$. The four shaded ideal tetrahedra are glued around the red edge $\edge$ to form an octahedron which contains the blue edge $\edge'$.}
		\label{fig:layersfor44move}
	\end{figure}
\end{proof}

\subsection{Example of a simplification}

We conclude this section with two examples of a $2$-bridge links whose Sakuma-Weeks triangulation is non-minimal, demonstrating the use of 4--4 and Pachner 3--2 moves to simplify this using \emph{Regina} \cite{regina}.

Consider $K(\Omega)$ generated by the word $\Omega=R^2LR$. The Sakuma-Weeks triangulation contains 6 ideal tetrahedra with gluings described by \Cref{tbl:gluing1}. All numbers referenced in this section are given with respect to the numbering provided by Regina given this gluing table.

The triangulation contains two edges of degree 3, listed as Edge 1 and Edge 3. Performing a Pachner 3--2 move along Edge 1 returns a triangulation with 5 ideal tetrahedra. The isomorphism signature of the simplified triangulation is {\tt{fLLQcbcdeeetsfxxh}}.

Now consider $K(\Omega')$ generated by the word $\Omega' = RL^3R$. The Sakuma-Weeks triangulation contains 8 ideal tetrahedra with gluings described by \Cref{tbl:gluing2}. To perform the 4--4 move we use Edge 4 and perform the 4--4 move along Axis 0 to ensure the new edge is not incident to the edge whose degree we want to decrease. The new triangulation obtained has isomorphism signature {\tt{iLLMLQcbcdefhghhmvftgafqa}}. This triangulation contains one degree 3 edge with 3 distinct ideal tetrahedra glued around it. Selecting this and performing a Pachner 3--2 move produces a triangulation with 7 idea tetrahedra and isomorphism signature {\tt{hLLMPkbcdfggfgmvfafwkf}}.

{\renewcommand{\arraystretch}{1.1}
\begin{table}[h]
	\centering\footnotesize
	\parbox{0.48\textwidth}{\begin{tabular}{c|c|c|c|c}
			\hline
			Tetrahedron & Face 012 & Face 013 & Face 023 & Face 123 \\
			\hline
			0           & 3 (102)  & 1 (213)  & 1 (021)  & 2 (023)  \\ \hline
			1           & 0 (032)  & 2 (103)  & 3 (123)  & 0 (103)  \\ \hline
			2           & 4 (032)  & 1 (103)  & 0 (123)  & 5 (321)  \\ \hline
			3           & 0 (102)  & 4 (031)  & 5 (021)  & 1 (023)  \\ \hline
			4           & 5 (032)  & 3 (031)  & 2 (021)  & 5 (103)  \\ \hline
			5           & 3 (032)  & 4 (213)  & 4 (021)  & 2 (321)
		\end{tabular}
		\caption{\small Regina gluing table for the Sakuma-Weeks triangulation of $K(\Omega)$ generated by $\Omega = R^2LR$.}
		\label{tbl:gluing1}}
	\quad
	\parbox{0.48\textwidth}{\begin{tabular}{c|c|c|c|c}
			\hline
			Tetrahedron & Face 012 & Face 013 & Face 023 & Face 123 \\
			\hline
			0           & 2 (032)  & 1 (213)  & 1 (021)  & 3 (321)  \\ \hline
			1           & 0 (032)  & 2 (031)  & 3 (021)  & 0 (103)  \\ \hline
			2           & 4 (032)  & 1 (031)  & 0 (021)  & 5 (321)  \\ \hline
			3           & 1 (032)  & 4 (031)  & 5 (021)  & 0 (321)  \\ \hline
			4           & 6 (032)  & 3 (031)  & 2 (021)  & 7 (321)  \\ \hline
			5           & 3 (032)  & 6 (031)  & 7 (021)  & 2 (321)  \\ \hline
			6           & 7 (032)  & 5 (031)  & 4 (021)  & 7 (103)  \\ \hline
			7           & 5 (032)  & 6 (213)  & 6 (021)  & 4 (321)
		\end{tabular}
		\caption{\small Regina gluing table for the Sakuma-Weeks triangulation of $K(\Omega')$ generated by $\Omega'=RL^3R$.}
		\label{tbl:gluing2}
	}
\end{table}
}

\section{Complexity bounds via angle structures}
\label{sec:min}

In the previous section we provide necessary conditions for the Sakuma-Weeks triangulations of $2$-bridge link complements to be minimal. In this section we provide complexity bounds on the remaining triangulations.

Given a triangulation $\tri$ of a hyperbolic $3$-manifold $M$, Thurston's bound on complexity \cite{thurston_geometry_1978} can be extended to provide both an upper- and lower-bound as
\begin{equation}\label{eq:full-bound}
	\frac{\Vol(M)}{v_3}\leq c(M) \leq |\tri|,
\end{equation}

where $v_3\approx 1.0149\dots$ is the volume of a regular ideal tetrahedron. 

The construction of the Sakuma-Weeks triangulations is symmetric. The layerings used when adding the two ideal tetrahedra from the next layer are the same with the only difference being that we layer each tetrahedron on opposite edges. It is also known that these triangulations have a non-empty space of angle structures. Combining these leads to the following result.

\begin{lemma}[\cite{purcell_hyperbolic_2020}, Lemma 10.24]\label{lem:anglesymmetry}
	Let $\tri$ be a Sakuma-Weeks triangulation of a $2$-bridge link complement with $2n$ tetrahedra. Let $\Delta_{2i-1}$ and $\Delta_{2i}$ be the two tetrahedra in the $i$-th layer $\widetilde{\Delta}_i$ with angles $(\theta_{3i-2}^1,\theta_{3i-1}^1,\theta_{3i}^1)$ and $(\theta_{3i-2}^2,\theta_{3i-1}^2,\theta_{3i}^2)$, respectively. The volume function $\mathcal{V}:\ang(\tri)\to\R$ obtains a maximum when the angles for $\Delta_{2i-i}$ agree with those for $\Delta_{2i}$ for all $1\leq i\leq n$.
\end{lemma}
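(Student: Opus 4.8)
The plan is to exploit the symmetry of the Sakuma-Weeks construction together with the concavity of the Lobachevsky function to show that any angle structure can be symmetrized without decreasing volume. First I would make precise the symmetry alluded to in the statement: because each layer $\widetilde{\Delta}_i$ is built from $S_i\times\{0\}$ glued to $S_{i+1}\times\{1\}$ by an isotopy that treats the two tetrahedra $\Delta_{2i-1},\Delta_{2i}$ of the layer interchangeably (layering over opposite pairs of horizontal/vertical edges), there is a combinatorial involution $\sigma$ of $\tri$ that swaps $\Delta_{2i-1}\leftrightarrow\Delta_{2i}$ within every layer, carrying edge classes to edge classes and preserving the correspondence of dihedral-angle positions. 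I would verify, using the edge-tracking in \Cref{fig:tetrahedralayer} and \Cref{fig:closingoff}, that $\sigma$ is a genuine automorphism of the triangulation, so it acts on the polytope $\ang(\tri)$ by permuting coordinates; in particular $\sigma$ preserves both the linear edge equations and the affine tetrahedron equations, hence maps $\ang(\tri)$ to itself.

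Next I would use that $\ang(\tri)$ is convex (stated after \Cref{defn:angle-structure}) and that $\mathcal{V}$ is concave on it — this follows because $\Lambda$ is concave on $(0,\pi/2)$ and, via the identity $\Lambda(\theta)=\frac12\Lambda(2\theta)$-type relations used in \cite{rivin_euclidean_1994,futer_angled_2011}, the single-tetrahedron volume $\Lambda(\theta_{3i-2})+\Lambda(\theta_{3i-1})+\Lambda(\theta_{3i})$ is strictly concave on the $2$-simplex $\{\theta_{3i-2}+\theta_{3i-1}+\theta_{3i}=\pi\}$; summing gives concavity of $\mathcal{V}$. Since $\mathcal{V}$ is also clearly $\sigma$-invariant (it is a symmetric sum over tetrahedra and $\sigma$ just permutes the summands), for any $\Theta\in\ang(\tri)$ the symmetrized point $\overline{\Theta}=\tfrac12(\Theta+\sigma\Theta)$ lies in $\ang(\tri)$ and satisfies $\mathcal{V}(\overline{\Theta})\ge\tfrac12\mathcal{V}(\Theta)+\tfrac12\mathcal{V}(\sigma\Theta)=\mathcal{V}(\Theta)$ by concavity and invariance. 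Because $\overline{\Theta}$ is by construction fixed by $\sigma$, its angles on $\Delta_{2i-1}$ agree with those on $\Delta_{2i}$ for every $i$. Applying this to a point where $\mathcal{V}$ attains its maximum on $\ang(\tri)$ (the maximum exists since $\mathcal{V}$ extends continuously to the compact polytope $\overline{\ang(\tri)}$, and the maximizer lies in the interior by the Casson–Rivin theorem, the complement being hyperbolic) shows the maximum is attained at a symmetric angle structure; strict concavity then forces the maximizer itself to be symmetric.

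The main obstacle I anticipate is the first step: pinning down the involution $\sigma$ carefully enough to be sure it is an automorphism of $\tri$ respecting the cyclic/dihedral labelling of each tetrahedron's three opposite edge pairs, so that "the angles agree" is a statement about matching coordinates and not merely an unordered equality of triples. This requires tracking the layering conventions of \Cref{fig:tetrahedralayer} through the closing-off identifications of \Cref{fig:closingoff} and checking the two boundary layers $\widetilde{\Delta}_1,\widetilde{\Delta}_{\ell-1}$ as special cases; the interior layers are uniform and should be routine once the convention is fixed. Everything after that — convexity, concavity, $\sigma$-invariance, the averaging argument — is standard and short. An alternative to invoking an explicit $\sigma$ would be to argue directly that the maximizer is unique (Casson–Rivin) and $\sigma$-invariant because $\sigma$ fixes the unique critical point of a strictly concave $\sigma$-invariant functional; I would present whichever is cleaner, but the averaging argument has the advantage of also yielding the inequality $\mathcal{V}(\overline{\Theta})\ge\mathcal{V}(\Theta)$ for \emph{every} $\Theta$, which is convenient for the explicit constructions in \Cref{sec:min}.
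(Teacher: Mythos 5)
The paper does not prove this lemma at all---it is imported verbatim from Purcell's book (\cite{purcell_hyperbolic_2020}, Lemma 10.24)---so there is no in-paper proof to compare against; your symmetrization argument (average $\Theta$ with its image under the layer-swapping involution $\sigma$, using convexity of $\ang(\tri)$ and concavity of $\mathcal{V}$) is exactly the standard proof given in that reference, and it is correct. One small caution: the concavity of the single-tetrahedron volume $\Lambda(\theta_{3i-2})+\Lambda(\theta_{3i-1})+\Lambda(\theta_{3i})$ on the simplex $\{\theta_{3i-2}+\theta_{3i-1}+\theta_{3i}=\pi\}$ does \emph{not} follow merely from concavity of $\Lambda$ on $(0,\pi/2)$, since one dihedral angle may be obtuse (where $\Lambda''(\theta)=-\cot\theta>0$); you need the Hessian computation of Rivin \cite{rivin_euclidean_1994} (see also Futer--Gu\'eritaud \cite{futer_angled_2011}, Lemma 5.3), which you do cite, so this is a matter of phrasing rather than a gap.
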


We utilise this lemma to simplify the construction of explicit angle structures. For the remainder of this paper, in an abuse of notation, we refer to both the $i$-th layer and the tetrahedra it contains as $\Delta_i$. Throughout this section we assume the conditions from \Cref{thm:nonminimal}.
\subsection{Block decomposition of 2-bridge links}
Let $\Omega'$ denote the subword of $\Omega$ obtained by removing the first and last letters. Then for $n\geq 1$,
\begin{equation}\label{eq:subwordOmega}
	\Omega' \vcentcolon= \begin{cases}
		L^{a_1}R^{a_2}\cdots L^{a_n}, & \text{ for odd } n,  \\
		L^{a_1}R^{a_2}\cdots R^{a_n}, & \text{ for even } n.
	\end{cases}
\end{equation}
We define a decomposition of $\Omega'$ via regular expressions to characterise subwords through the use of two operations:
\begin{itemize}
	\item $(\omega)^m$ determines $m$ copies of the string $\omega$,
	\item $(\alpha|\omega)$ determines either the string $\alpha$ or the string $\omega$.
\end{itemize}
These operations allow us, for instance, to write (\ref{eq:subwordOmega}) simply as $\Omega' = L^{a_1}R^{a_2}\cdots(L^{a_n}|R^{a_n})$.
\begin{definition}\label{defn:blocks}
	A \emph{block} is a collection of consecutive syllables matching one of the following types:
	\begin{itemize}
		\item[(B1)] $(L^p(RL)^m\mid R^p(LR)^m)$ for $m\geq 0$ and $p\in\{0,1\}$;
		\item[(B2)] $((L^2)^p(R^2L^2)^m\mid (R^2)^p(L^2R^2)^m)$ for $m\geq 0$ and $p\in\{0,1\}$;
		\item[(B3)] A sequence of $(k+1)$ blocks of type (B1) of length one separated by $k$ blocks of type (B2).
	\end{itemize}
	The \emph{length} of a block is the number of syllables it contains.
\end{definition}
The above blocks can be described simply. (B1) describes strings of alternating $L$'s and $R$'s and (B2) describes strings of alternating $L^2$'s and $R^2$'s. A (B3) block may look like $R(L^2)R(L^2R^2L^2R^2)L(R^2L^2)R$.

The length of a block is not defined in terms of the number of letters the block contains since we aim to decompose words by their syllables. This means that no syllable is split between two different blocks. By our definition we can see that (B1) and (B2) blocks have length $2m+p$ and a (B3) block consisting of $k$ (B2) blocks has length $(k+1) + \sum_{i=1}^k (2m_i+p_i)$ where $2m_i+p_i$ is the length of the $i$-th (B2) block. Given a fixed word $\Omega'$, we call a block of $\Omega'$ {\em maximal} if it cannot be extended to a larger block (possibly of a different type).

\begin{lemma}\label{lem:decomposition}
	Let $\Omega' = L^{a_1}R^{a_2}\cdots(R^{a_n}\mid L^{a_n})$ with $a_i\in\{1,2\}$ for all $1\leq i \leq n$. Then $\Omega'$ can be decomposed into maximal sequences of (B3) blocks, possibly separated by (B1) blocks, and such that (B2) blocks occur at most at the start or end of $\Omega'$. 
\end{lemma}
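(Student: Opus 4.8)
The plan is to read the decomposition off directly from the run-length structure of $\Omega'$ and then check that the pieces have the shapes prescribed in \Cref{defn:blocks}. First I would pass from $\Omega'$ to its exponent sequence $(a_1,\dots,a_n)\in\{1,2\}^n$: since consecutive syllables of $\Omega'$ alternate between $L$ and $R$, any stretch of consecutive syllables automatically reads $LRLR\cdots$ or $RLRL\cdots$, which is exactly the alternating-letter shape of the templates (B1), (B2), (B3) (up to the global choice of which letter comes first). So the letters are forced and only the exponent sequence matters for deciding which template a stretch of syllables fits. Decompose $(a_1,\dots,a_n)$ into its maximal constant runs $X_1X_2\cdots X_r$; these alternate between runs of $1$'s (each already a block of type (B1)) and runs of $2$'s (each already a block of type (B2)).

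Next I would split off the ends. If $X_1$ is a run of $2$'s, set it aside as the leading (B2) block permitted by the statement, and likewise set aside $X_r$ if it is a run of $2$'s. Deleting these one or two boundary runs leaves a word $\Omega''$ that is either empty, a single run of $1$'s, or a word beginning and ending with a run of $1$'s in which \emph{every} run of $2$'s is flanked on both sides by a run of $1$'s --- the last point being automatic, since the run types alternate. Then I would build the (B3) blocks greedily inside $\Omega''$: each maximal run of $2$'s, lying between runs of $1$'s $Y$ on its left and $Y'$ on its right, is assigned one $1$ borrowed from the right end of $Y$ and one from the left end of $Y'$, producing a fragment $1\,2^{\ge 1}\,1$. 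Whenever two consecutive runs of $2$'s are separated by a run of $1$'s of length exactly one, that single $1$ is taken to be the common middle (B1) block of length one and the two fragments are amalgamated; iterating, each maximal chain of runs of $2$'s whose consecutive members are separated by length-one runs of $1$'s yields exactly one block of type (B3), its interior length-one (B1) blocks being those shared $1$'s and its two outer (B1) blocks of length one being the borrowed ones. A run of $1$'s of length $\ge 2$ donates one $1$ to the (B3) block adjacent on each side (to only one side if it is the first or last run of $\Omega''$) and its remaining $\ge 1$ ones form a (B1) block; a length-one run of $1$'s interior to $\Omega''$ is absorbed entirely into a (B3) block. If $\Omega''$ is empty or a single run of $1$'s the construction is vacuous and we are left with at most one (B1) block together with the recorded boundary (B2) blocks.

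Finally I would verify the three assertions. Every syllable lies in exactly one block: the $2$'s of $\Omega''$ are all consumed by (B3) blocks, the boundary $2$'s by the recorded (B2) blocks, and the $1$'s are partitioned into the borrowed ones inside (B3) blocks and the leftover ones inside (B1) blocks. No (B2) block occurs away from the two ends, because any run of $2$'s strictly inside $\Omega'$ survives into $\Omega''$, is flanked there by runs of $1$'s, and is therefore swallowed by a (B3) block. And the (B3) blocks produced occur in maximal consecutive strings separated by (B1) blocks: two consecutive (B3) blocks abut precisely when the run of $1$'s between them has length $2$ (one $1$ taken by each), whereas a run of length $\ge 3$ leaves a nonempty (B1) block between them, which is exactly a separator of the allowed kind; grouping abutting (B3) blocks then gives the maximal sequences.

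I expect the one substantive observation to be that alternation of run types forces every interior run of $2$'s to have $1$'s on both sides, and hence to be absorbable into a (B3) block --- this is what makes the ``(B2) only at the ends'' clause true. The rest is bookkeeping, and I anticipate the only genuinely fiddly part of a careful write-up to be the behaviour at the two ends of $\Omega''$, where a run of $1$'s borrows on one side only and where a leftover (B1) block can end up adjacent to a boundary (B2) block; the degenerate cases ($\Omega'$ a single run of $1$'s or of $2$'s, so that there are no (B3) blocks at all) should be handled separately at the outset.
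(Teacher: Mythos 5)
Your proposal is correct and follows essentially the same route as the paper: both arguments reduce everything to the run-length structure of the exponent sequence $(a_1,\dots,a_n)$, peel off a leading or trailing run of $2$'s as the permitted boundary (B2) blocks, and observe that every interior run of $2$'s is flanked by $1$'s and hence absorbed into a (B3) block, with the leftover $1$'s forming the separating (B1) blocks (the paper phrases this dually, via the maximal runs of $1$'s being the start, middle, or end of (B3) blocks). Your write-up is if anything more careful about the endpoint bookkeeping; the only slip is the phrase ``its remaining $\geq 1$ ones form a (B1) block'' applied to an interior run of $1$'s of length exactly two, which in fact leaves nothing behind --- but you correct this yourself when you later note that such runs are precisely where consecutive (B3) blocks abut.
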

\begin{proof}
	If $a_1=2$ or $a_n=2$, then the decomposition consists of a (B2) block at the start or end $\Omega'$, respectively. Assume this is not the case and that $a_1=a_n=1$. We show that the decomposition of $\Omega'$ consists only of (B1) and (B3) blocks.
	
	Let $(a_i,\dots,a_{i+k})$ be a maximal sequence of exponents such that $a_j=1$ for $i\leq j\leq i+k$ with $k\geq 0$. This sequence has length $k+1$. If $n=k+1$, then $i=1$ and the sequence corresponds to a (B1) block. Hence let $n>k+1$. If $i=1$, then $a_{i+k+1}=2$ and the (possibly empty) subsequence $(a_1,\dots,a_{i+k-1})$ corresponds to a (B1) block with $(a_{i+k},a_{i+k+1})$ corresponding to the start of a (B3) block.  Similarly, if $i+k=n$, then $a_{i-1}=2$ and $(a_{i-1},a_i)$ corresponds to the end of a (B3) block followed by a (possibly empty) (B1) block. Otherwise $a_{i-1}=a_{i+k+1}=2$. If $k=0$, then the sequence is contained in a (B3) block; if $k\geqq1$, then the sequence corresponds to the end of a (B3) block followed by a (possibly empty) (B1) block followed by the start of another (B3) block.
	
	From this, the maximal sequences of exponents equal to one determine the decomposition of $\Omega'$ with (B2) blocks occurring only at the start or end of $\Omega'$.
\end{proof}

\subsection{Angle structures on blocks}

We show that a decomposition of the Sakuma-Weeks triangulation $\tri$ associated to a word $\Omega$ can be endowed with an angle structure  $\Theta\in\ang(\tri)$ which can be computed directly from the decomposition of $\Omega'$ into blocks of types (B1), (B2) and (B3). Given a block of $\Omega'$ corresponding to one of the types in \Cref{defn:blocks} we can find the corresponding subcomplex $\mathcal{S}\subseteq\tri$. The edge classes which glue to adjacent subcomplexes are incomplete. From condition (2) of \Cref{defn:angle-structure}, the angle sum around any edge class in $\tri$ adds to $2\pi$. We define the \emph{angle deficit} as the remaining angle required to achieve $2\pi$ on such incomplete edge classes. We call an edge an \emph{interior} edge if its edge class is entirely contained in a single block, otherwise we call it a \emph{boundary} edge. Each block has three boundary edge classes at both the start and end of the corresponding subcomplex, each with an angle deficit. We call the triple of angle deficits at the start or end of a block a $\delta$- or $\epsilon$-\emph{boundary deficit}, respectively. Recall from \Cref{defn:angle-structure} that $\Theta = (\theta_1,\dots,\theta_{3\ell}) \in \ang(\tri)$ where $\ell = |\Omega|-1$ and the angles assigned to the $i$-th tetrahedron are $\Theta^{(i)}=(\theta_{3i-2},\theta_{3i-1},\theta_{3i})$. We define the boundary deficits on a block $\delta = (\delta_0,\delta_1,\delta_2)$ and $\epsilon = (\epsilon_0,\epsilon_1,\epsilon_2)$ in the same way (\Cref{fig:boundarypattern}).

\begin{figure}[ht]
	\centering
	\includegraphics[width=0.6\textwidth]{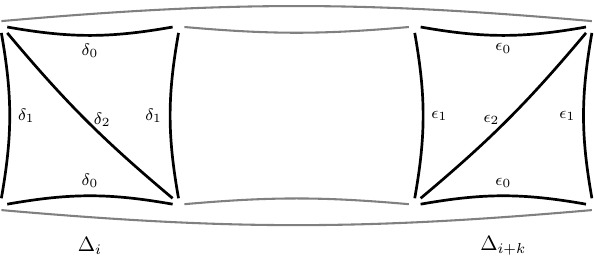}
	\caption{Assignment of $\delta = (\delta_0,\delta_1,\delta_2)$ and $\epsilon=(\epsilon_0,\epsilon_1,\epsilon_2)$ boundary deficits on a block containing $k+1$ layers, starting from layer $\Delta_i$ and containing all layers up to (and including) $\Delta_{i+k}$.}
	\label{fig:boundarypattern}
\end{figure}

Ishikawa and Nemoto \cite{ishikawa_construction_2016} prove that the Sakuma-Weeks triangulation of the $2$-bridge link complement $S^3\backslash L(\Omega)$ is minimal when the word $\Omega$ can be decomposed as a single (B1) block. Their proof shows that the volume of the link complement in this case is almost $|\tri|\cdot v_3$, with a deficit of approximately $0.66$ in total. We generalise their work by assigning a dihedral angle of $\pi/3$ to each edge in a (B1) block ensuring these tetrahedra are regular ideal.

\begin{lemma}\label{lem:angleB1}
	Let $\tri$ be the Sakuma-Weeks triangulation of a $2$-bridge link complement. A subcomplex $\mathcal{S}\subset\tri$ corresponding to a (B1) block can be constructed from only regular ideal tetrahedra. The first and last letter of the block determine the $\delta-$ and $\epsilon-$boundary deficits, respectively as
	\begin{itemize}
		\item $L$ at start: $\delta = (\pi/3,\;\pi,\;5\pi/3)$; $L$ at end: $\epsilon = (\pi,\;\pi/3,\;5\pi/3)$
		\item $R$ at start: $\delta = (\pi,\;\pi/3,\;5\pi/3)$; $R$ at end: $\epsilon = (\pi/3,\;\pi,\;5\pi/3)$
	\end{itemize}
\end{lemma}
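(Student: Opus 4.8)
The plan is to work directly with the combinatorial description of the Sakuma-Weeks triangulation for a (B1) block, assign the angle $\pi/3$ to every edge of every tetrahedron in the corresponding subcomplex $\mathcal{S}$, and then verify that the three defining conditions of \Cref{defn:angle-structure} hold for all \emph{interior} edge classes, while reading off the angle deficits at the boundary edge classes. Since a (B1) block of the form $L^p(RL)^m$ (or its mirror) has length $2m+p$, it contributes $2m+p-1$ layers, hence $2(2m+p-1)$ tetrahedra; setting every $\theta = \pi/3$ makes each tetrahedron regular ideal, so conditions (1) and (2) are immediate, and only condition (3) at interior edges and the computation of $\delta,\epsilon$ at the boundary need genuine work.

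First I would set up the bookkeeping: using \Cref{fig:tetrahedralayer} and the alternating pattern of $R$'s and $L$'s inside a (B1) block, I would track how the diagonal edges of layer $\Delta_i$ are layered onto the horizontal/vertical edges of layer $\Delta_{i+1}$, alternating between the two types at each step (since consecutive syllables alternate $R,L,R,\dots$). The key structural fact — already used implicitly in the proofs of \Cref{lem:degree3,lem:degree4} — is that in an alternating block every interior edge class picks up exactly six incident edge-ends: a diagonal edge contributes two angle-copies (its two opposite edges in the same tetrahedron carry the same angle, and a diagonal edge is shared by the two tetrahedra of a layer), and a horizontal/vertical edge class that is the image of a single diagonal is degree $6$ after the alternating layering closes it up. With every angle equal to $\pi/3$, an edge class of degree $6$ receives $6\cdot\pi/3 = 2\pi$, so condition (3) holds. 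I would present this as a short case analysis over the edge types (horizontal, vertical, diagonal) in a generic interior layer, together with the two ends of the block where the layering pattern is inherited from the adjacent block rather than closed off internally.

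Next I would compute the boundary deficits. At the start of the block, three edge classes are ``incomplete'' because they continue into the previous block; their partial angle sums come only from tetrahedra inside $\mathcal{S}$. The first letter of the block ($L$ or $R$) dictates which of the three starting edge classes (indexed $0,1,2$ in \Cref{fig:boundarypattern}) meets how many $\pi/3$-angles from $\Delta_1$: one class meets a single diagonal-type contribution (deficit $2\pi - \pi/3 = 5\pi/3$), one meets a pair (deficit $2\pi - 2\cdot\pi/3 = 4\pi/3$... ) — here I would carefully match the figure so that the stated values $(\pi/3,\pi,5\pi/3)$ etc.\ come out, i.e.\ the three deficits are $2\pi$ minus the within-block partial sums, which for the $L$-at-start case are $5\pi/3$, $\pi$, $\pi/3$ in the appropriate order. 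The $\epsilon$-deficits at the end are obtained identically by reading \Cref{fig:closingoff}-style identifications in reverse, and the $L\leftrightarrow R$ and start$\leftrightarrow$end symmetry of the construction (\Cref{lem:anglesymmetry} and the mirror symmetry of the layering) swaps the first two coordinates, producing exactly the four listed formulas.

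The main obstacle I expect is the edge-tracking at the two ends of the block: an interior edge class well inside the block is unambiguously degree $6$, but the first and last edge classes of $\mathcal{S}$ straddle block boundaries, and one must be careful that the ``$k+1$ layers'' picture in \Cref{fig:boundarypattern} correctly accounts for the diagonal edge entering from the previous layer (the same subtlety flagged in the proof of \Cref{lem:degree4}). Getting the indexing of $(\delta_0,\delta_1,\delta_2)$ consistent with the figure — so that gluing a (B1) block onto another block later produces matching deficits that sum to $2\pi$ — is the delicate point; everything else is a direct verification that regular ideal tetrahedra satisfy the edge equations on a triangulation whose interior edges all have degree $6$.
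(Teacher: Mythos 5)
Your approach---set every dihedral angle to $\pi/3$, observe that every interior edge class of a (B1) block has degree six so the edge equations $6\cdot\pi/3=2\pi$ hold, and read the boundary deficits off as $2\pi$ minus the within-block partial sums---is exactly the paper's argument, which organises the same verification as an induction on the block length with explicit base cases. Two small points to fix when writing it out: a (B1) block of length $2m+p$ occupies $2m+p$ layers (one per letter), not $2m+p-1$; and for blocks of length one or two a single boundary edge class carries both a $\delta$- and an $\epsilon$-deficit, so the edge equations only determine sums such as $\delta_0+\epsilon_0=4\pi/3$, and you must apportion these consistently with the stated values (the paper does this explicitly, e.g.\ setting $\delta_1=\epsilon_1=\pi/3$ in the length-two case), rather than reading each deficit off independently as your recipe suggests.
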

\begin{proof}
	Let $\Omega' = L^p(RL)^m$ be the (B1) block. When $m=0, p=1$ we have the following angle equations associated with the boundary edges of $\mathcal{S}$,
	\begin{align*}
		\delta_0 + 2\theta_{3i-2} + \epsilon_0 & = 2\pi, \\
		\delta_1 + 2\theta_{3i-1} + \epsilon_1 & = 2\pi, \\
		\delta_2 + \theta_{3i}                 & = 2\pi, \\
		\theta_{3i} + \epsilon_2               & = 2\pi,
	\end{align*}
	where $\delta_i$ and $\epsilon_i$ denote the boundary deficits at the start and end of the block, respectively. Setting $\theta_j=\pi/3$ for each $3i-2\leq j\leq 3i$ gives,
	\begin{align*}
		\delta_0 + \epsilon_0 = \delta_1 + \epsilon_1 = \frac{4\pi}{3} \qquad\text{and}\qquad \delta_2 = \epsilon_2 = \frac{5\pi}{3}.
	\end{align*}
	This satisfies the claimed boundary deficits.
	When $m=1,p=0$ we obtain the following angle equations associated with the boundary edges of $\mathcal{S}$,
	\begin{align*}
		\delta_0 + 2\theta_{3i-2} + \theta_{3(i+1)}                 & = 2\pi, \\
		\delta_1 + 2\theta_{3i-1} + 2\theta_{3(i+1)-1} + \epsilon_1 & = 2\pi, \\
		\delta_2 + \theta_{3i}                                      & = 2\pi, \\
		\theta_{3i} + 2\theta_{3(i+1)-2} + \epsilon_0               & = 2\pi, \\
		\theta_{3(i+1)} + \epsilon_2                                & =2\pi.
	\end{align*}
	Setting $\theta_j=\pi/3$ for each $3i-2\leq j\leq 3(i+1)$ gives
	\begin{align*}
		\delta_0 = \epsilon_0 = \pi,\qquad \delta_1 + \epsilon_1 = \frac{2\pi}{3} \qquad\text{and}\qquad \delta_2 = \epsilon_2 = \frac{5\pi}{3}.
	\end{align*}
	Setting $\delta_1 = \epsilon_1 = \pi/3$ gives the desired boundary deficit.
	
	Suppose that for $\Omega'$ we achieve the desired boundary deficits with each tetrahedron being regular ideal for some $m\geq 1$ and $p\in\{0,1\}$. The angle equation an interior edge of the block first appearing in $\Delta_{i+k}$, $k\geq 0$, is either
	\begin{align*}
		\theta_{3(i+k)} + 2\theta_{3(i+k+1)-2} + 2\theta_{3(i+k+2)-2} + \theta_{3(i+k+3)} & =2\pi,\qquad\text{ or } \\
		\theta_{3(i+k)} + 2\theta_{3(i+k+1)-1} + 2\theta_{3(i+k+2)-1} + \theta_{3(i+k+3)} & =2\pi,
	\end{align*}
	In each case, setting each angle to be $\pi/3$ satisfies these equations. The angle equations for boundary edge classes at the start of the block are,
	\begin{equation*}
		\begin{aligned}
			\delta_0 + 2\theta_{3i-2} + 2\theta_{3(i+1)-2} + \theta_{3(i+2)} & = 2\pi, \\
			\delta_1 + 2\theta_{3i-1} + \theta_{3(i+1)}                      & =2\pi,  \\
			\delta_2 + \theta_{3i}                                           & =2\pi,
		\end{aligned}\qquad\text{ or }\qquad\begin{aligned}
			\delta_0 + 2\theta_{3i-2} + \theta_{3(i+1)}                      & = 2\pi, \\
			\delta_1 + 2\theta_{3i-1} + 2\theta_{3(i+1)-1} + \theta_{3(i+2)} & = 2\pi, \\
			\delta_2 + \theta_{3i}                                           & = 2\pi.
		\end{aligned}
	\end{equation*}
	for $p=1$ and $p=0$, respectively. Setting $\theta_j=\pi/3$ for each $3i-2\leq j\leq 3(i+1)$ gives the desired boundary deficit. The angle equations for the boundary edge classes at the end of the block are then,
	\begin{align*}
		\theta_{3(i+m-1)} + 2\theta_{3(i+m)-2} + \epsilon_0                        & = 2\pi \\
		\theta_{3(i+m-2)} + 2\theta_{3(i+m-1)-1} + 2\theta_{3(i+m)-1} + \epsilon_1 & = 2\pi \\
		\theta_{3(i+m)} + \epsilon_2                                               & =2\pi.
	\end{align*}
	Setting $\theta_j=\pi/3$ for each $3(i+m-2)\leq j\leq 3(i+m)$ gives the desired boundary deficit with $\epsilon_0 = \pi$, $\epsilon_1=\pi/3$ and $\epsilon_2 = 5\pi/3$.
	
	The proof follows similarly if we assume instead that $\Omega' = R^p(LR)^m$.
\end{proof}

It remains to show that we can assign angles to the remaining block types which are compatible with the boundary deficits on the (B1) blocks. As there are many ways to achieve this, we endeavour to assign angles which achieve a larger volume for the block. We restrict to the following shapes of ideal tetrahedra,
\begin{equation*}
	\arraycolsep=15pt\def\arraystretch{1.7}
	\begin{array}{lll}
		\text{(0) } \Delta(\pi/3,\pi/3,\pi/3)          & \text{(V) } \Delta(\pi/6,\; \pi/2,\; \pi/3)      \\
		\text{(I) } \Delta(\pi/3,\;3\pi/8,\;7\pi/24)   & \text{(VI) } \Delta(\pi/6,\; \pi/4,\; 7\pi/12)   \\
		\text{(II) } \Delta(\pi/3,\;\pi/4,\;5\pi/12)   & \text{(VII) } \Delta(\pi/8,3\pi/8,\pi/2)         \\
		\text{(III) } \Delta(\pi/4,\; \pi/4,\; \pi/2)  & \text{(VIII) } \Delta(\pi/8,\; \pi/4,\; 5\pi/8)  \\
		\text{(IV) } \Delta(5\pi/24,\;7\pi/24,\;\pi/2) & \text{(IX) } \Delta(\pi/12,\; 7\pi/12,\; \pi/3).
	\end{array}
\end{equation*}
\begin{remark}
	The above shapes can be refined further to obtain slightly better volume estimates for our main result. However, such a refinement introduces more shapes and more complicated arguments decreasing the overall readability of the remainder of this section. At the same time the increase in volume estimates is not sufficient to sharpen our main result significantly enough to justify the extra burden put on the reader.
\end{remark}

\begin{lemma}\label{lem:angleB3}
	Let $\tri$ be the Sakuma-Weeks triangulation of a $2$-bridge link complement. A subcomplex $\mathcal{S}\subset\tri$ corresponding to a (B3) block can be constructed using only ideal tetrahedra of shape types (I), (II), (III), (IV), (VI) and (VIII) assigned depending on the number of (B2) blocks as:
	\begin{itemize}
		\item a single (B2) block of length $k$
		      \begin{center}
			      \begin{tikzpicture}
				      \node (I) {(I)};
				      \node[right=of I] (VI) {(VI)};
				      \node[right=of VI] (III) {\big[\,(III)\,\big]$\,\big.^{2(k-1)}$};
				      \node[right=of III] (IV) {(IV)};
				      \node[right=of IV] (II) {(II)};
				      
				      \draw[->] (I) -- (VI);
				      \draw[->] (VI) -- (III);
				      \draw[->] (III) -- (IV);
				      \draw[->] (IV) -- (II);
			      \end{tikzpicture}
		      \end{center}
		\item $k$ (B2) blocks of length 1,
		      \begin{center}
			      \begin{tikzpicture}
				      \node (I) {(I)};
				      \node[right=of I] (VI) {(VI)};
				      \node[right=of VI] (IIIi) {\Big[\,(III)};
				      \node[right=of IIIi] (III) {(III)};
				      \node[right=of III] (VIII) {(VIII)\,\Big]$\,\Big.^{k-1}$};
				      \node[right=of VIII] (IV) {(IV)};
				      \node[right=of IV] (II) {(II)};
				      
				      \draw[->] (I) -- (VI);
				      \draw[->] (VI) -- (IIIi);
				      \draw[->] (IIIi) -- (III);
				      \draw[->] (III) -- (VIII);
				      \draw[->] (VIII) -- (IV);
				      \draw[->] (IV) -- (II);
			      \end{tikzpicture}
		      \end{center}
		\item $k$ (B2) blocks of arbitrary lengths combines the above two by adding more layers of shape (III).
	\end{itemize}
	
	The first and last letter of the block determines the $\delta-$ and $\epsilon-$boundary deficits, respectively as
	\begin{itemize}
		\item $L$ at start: $\delta = (\pi/3,\;\pi,\;5\pi/3)$; $L$ at end: $\epsilon = (\pi,\;\pi/3,\;5\pi/3)$
		\item $R$ at start: $\delta = (\pi,\;\pi/3,\;5\pi/3)$; $R$ at end: $\epsilon = (\pi/3,\;\pi,\;5\pi/3)$
	\end{itemize}
\end{lemma}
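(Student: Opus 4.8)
The plan is to prove \Cref{lem:angleB3} by the same inductive bookkeeping used for \Cref{lem:angleB1}, but now tracking the angle deficits through the more elaborate internal structure of a (B3) block. First I would set up the combinatorial skeleton: a (B3) block is a chain of $(k+1)$ length-one (B1) subblocks joined by $k$ (B2) subblocks, so the subcomplex $\mathcal{S}\subset\tri$ is an explicit layered stack of tetrahedra whose gluing pattern is dictated by \Cref{fig:tetrahedralayer,fig:closingoff} applied to the relevant syllables. For each such layer I would write down the angle equation of every interior edge class and every boundary edge class — exactly as in the proof of \Cref{lem:angleB1}, where boundary edges give equations of the form $\delta_j + (\text{sum of interior angles}) + \epsilon_j = 2\pi$ and interior edges give $(\text{sum of interior angles}) = 2\pi$. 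The claim then amounts to checking that the prescribed shape assignment (the sequences (I)$\to$(VI)$\to$(III)$^{\cdots}$$\to$(IV)$\to$(II), with (VIII)-interludes when the (B2) blocks have length one) satisfies all interior edge equations and produces precisely the stated $\delta$ and $\epsilon$ boundary deficits.

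The key steps, in order, are: (1) Handle the base case of a single (B2) block of length $k$. Here $\mathcal{S}$ has $k+3$ layers; assign shape (I) to the first, (VI) to the second, (III) to the middle $2(k-1)$ layers, (IV) to the penultimate, and (II) to the last. Verify that every interior edge — these come in the two families appearing in \Cref{lem:angleB1}, namely $\theta+2\theta'+2\theta''+\theta'''=2\pi$ for horizontal-type and vertical-type edges — closes up with these angles; the (III) shapes $\Delta(\pi/4,\pi/4,\pi/2)$ are chosen precisely so that a run of them is self-consistent, and the transition shapes (I),(VI),(IV),(II) are tuned to bridge from the $\pi/3$-values inherited at the $\delta$-boundary to the $\pi/2$-values in the (III)-core and back. (2) Handle the base case of $k$ (B2) blocks each of length one, where the chain is (I)$\to$(VI)$\to$[(III)(III)(VIII)]$^{k-1}\to$(IV)$\to$(II); check the interior edge equations for the repeated block, in particular that the (VIII) shape $\Delta(\pi/8,\pi/4,5\pi/8)$ absorbs the angle discrepancy introduced each time a new length-one (B2) block begins. (3) Do the induction/interpolation for (B2) blocks of mixed lengths by splicing in extra (III)-layers, observing that adding a (III)-layer to a run of (III)-layers preserves all interior equations. (4) In each case, read off the two boundary equations at the start and the two at the end, substitute the assigned angles for (I)/(II) (resp. (VI)/(IV)), and confirm the four cases of $L$/$R$ at start/end give exactly $\delta=(\pi/3,\pi,5\pi/3)$ etc., matching \Cref{lem:angleB1} so that (B3) blocks are interchangeable with (B1) blocks when assembling $\tri$.

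I would also need to confirm \emph{strictness} — that every assigned angle lies in $(0,\pi)$ and sums to $\pi$ per tetrahedron — but this is immediate from inspecting the listed shapes (0)–(IX), each of which is already written as a triple of positive angles summing to $\pi$. The substantive content is entirely the edge-equation verification in steps (1)–(3).

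The main obstacle I expect is step (2): the repeated unit [(III)(III)(VIII)] for length-one (B2) blocks forces a careful accounting of \emph{which} pair of opposite edges of each tetrahedron receives which of the three angles, since the Sakuma-Weeks layering alternates between layering over horizontal versus vertical edges (the $L$ versus $R$ distinction in \Cref{fig:tetrahedralayer}), and a single sign error in matching an interior angle to an edge class will break the $2\pi$ condition. Concretely, the difficulty is that (III) is \emph{not} regular, so unlike in \Cref{lem:angleB1} one cannot simply set every angle to $\pi/3$ and be done; one must verify that the three distinct values $\{\pi/4,\pi/4,\pi/2\}$ are distributed across the two (III)-layers in a way consistent with the edge identifications coming from the specific syllable pattern $R^2L^2$ (or $L^2R^2$) inside the (B2) block. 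Managing this bookkeeping uniformly in $k$ — rather than case-by-case — is where the real work lies, and where I would lean most heavily on the explicit edge-tracking pictures \Cref{fig:tetrahedralayer,fig:closingoff} already established in \Cref{sec:SW}.
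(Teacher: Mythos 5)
Your proposal matches the paper's proof in essentially every respect: the paper likewise proceeds by writing down the explicit angle equations layer by layer, handling the three regimes (one (B2) block, several length-one (B2) blocks, mixed lengths) via base cases plus induction, tuning the transition shapes (I), (VI), (IV), (II) and the (VIII) interludes to the boundary deficits, and tracking the $L$/$R$ swap of the first two angle coordinates exactly as you anticipate in your "main obstacle" paragraph. The only slip is your layer count "$k+3$" for a single (B2) block of length $k$ (it is $2k+2$, consistent with the assignment you actually describe), which does not affect the argument.
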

\begin{proof}
	Recall that a (B3) block consists of at least two (B1) blocks of length one separated by nonempty (B2) blocks. The shapes which appear are determined by both the number of (B1) blocks, and the lengths of the (B2) blocks. We assume throughout that our block starts with $L$, however all of the following arguments still hold if it instead starts with $R$ by switching $\theta_{3j-2}$ and $\theta_{3j-1}$ in each $\Delta_j$.
	
	\textbf{One (B2) block -- }
	Let $\Omega' = L(R^2L^2)^m(R^2L)^pR^{1-p}$ with $m\geq 0$, $p\in\{0,1\}$. When $m=0$, $p=1$ we have $\Omega'=LR^2L$ giving the following angle equations,
	\begin{align}\label{eq:121base}
		\begin{split}
			\delta_0 + 2\theta_{3i-2} + 2\theta_{3(i+1)-2} + 2\theta_{3(i+2)-2} + \theta_{3(i+3)} &= 2\pi,\\
			\delta_1 + 2\theta_{3i-1} + \theta_{3(i+1)} &= 2\pi,\\
			\delta_2 + \theta_{3i} &= 2\pi,\\
			\theta_{3i} + 2\theta_{3(i+1)-1} + \theta_{3(i+2)} &= 2\pi,\\
			\theta_{3(i+1)} + 2\theta_{3(i+2)-1} + 2\theta_{3(i+3)-1} + \epsilon_1 &=2\pi,\\
			\theta_{3(i+2)} + 2\theta_{3(i+3)-2} + \epsilon_0 &= 2\pi,\\
			\theta_{3(i+3)} + \epsilon_2 &= 2\pi,
		\end{split}
	\end{align}
	where $\delta_j$ and $\epsilon_j$ denote the boundary deficits at the start and end of the block, respectively. We obtain tetrahedra of types (I), (II), (IV) and (VI) by setting,
	\begin{equation}\label{eq:121open-and-close}
		\begin{alignedat}{9}
			\text{(I)} & \quad & \angT{i} &= \angles{\frac{7\pi}{24}}{\frac{3\pi}{8}}{\frac{\pi}{3}}, & \qquad & 
			\text{(VI)} & \quad & \angT{i+1} &= \angles{\frac{\pi}{6}}{\frac{7\pi}{12}}{\frac{\pi}{4}}, \\
			\text{(IV)} & \quad & \angT{i+2} &= \angles{\frac{5\pi}{24}}{\frac{7\pi}{24}}{\frac{\pi}{2}}, & \qquad & 
			\text{(II)} & \quad & \angT{i+3} &= \angles{\frac{\pi}{4}}{\frac{5\pi}{12}}{\frac{\pi}{3}}.
		\end{alignedat}
	\end{equation}
	Substituting these values into \Cref{eq:121base} results in the following boundary deficits,
	\begin{equation*}
		\delta_0 = \epsilon_1 = \frac{\pi}{3}, \qquad
		\delta_1 = \epsilon_0 = \pi\qquad\text{and}\qquad
		\delta_2 = \epsilon_2 = \frac{5\pi}{3}.
	\end{equation*}
	Proceeding to the case when $m=1$, $p=0$ so that $\Omega' = LR^2L^2R$, we can note that the first four angle equations of \Cref{eq:121base} remain the same and are solved by the same values as above, also preserving the $\delta$-boundary deficit. The remaining angle equations are as follows,
	\begin{align*}
		\begin{split}
			\theta_{3(i+1)} + 2\theta_{3(i+2)-1} + 2\theta_{3(i+3)-1} + 2\theta_{3(i+4)-1} + \theta_{3(i+5)} &=2\pi,\\
			\theta_{3(i+2)} + 2\theta_{3(i+3)-2} + \theta_{3(i+4)} &= 2\pi,\\
			\theta_{3(i+3)} + 2\theta_{3(i+4)-2} + 2\theta_{3(i+5)-2} + \epsilon_0 &= 2\pi,\\
			\theta_{3(i+4)} + 2\theta_{3(i+5)-1} + \epsilon_1 &= 2\pi,\\
			\theta_{3(i+5)} + \epsilon_2 &= 2\pi.
		\end{split}
	\end{align*}
	We set $\angT{i}$ and $\angT{i+1}$ as in \Cref{eq:121open-and-close}. To obtain tetrahedra of shapes (IV) and (II) we set,
	\begin{equation}\label{eq:121closeR}
		\begin{alignedat}{9}
			\text{(IV)} & \quad & \angT{i+4} &=\angles{\frac{7\pi}{24}}{\frac{5\pi}{24}}{\frac{\pi}{2}}, &\qquad& 
			\text{(II)} & \quad & \angT{i+5} &= \angles{\frac{5\pi}{12}}{\frac{\pi}{4}}{\frac{\pi}{3}}.
		\end{alignedat}
	\end{equation}
	This assignment of angles yields $\epsilon_1=\pi$ and $\epsilon_2=5\pi/3$. To satisfy the remaining equations, we choose $\Delta_{i+2}$ and $\Delta_{i+3}$ to be have shape (III) as,
	\begin{equation}\label{eq:121shape3}
		\angT{i+2} = \angles{\frac{\pi}{4}}{\frac{\pi}{4}}{\frac{\pi}{2}} \qquad\text{and}\qquad \angT{i+3} =\angles{\frac{\pi}{2}}{\frac{\pi}{4}}{\frac{\pi}{4}},
	\end{equation}
	yielding $\epsilon_0 = \pi/3$. Observe that in the preceding two cases the first two angle vectors are the same and the last two angle vectors are the same with the first two angles swapped. The order in which these angles are assigned can be determined by the last letter of the block. If $\Omega'$ ends on $L$, then we assign these angles as in \Cref{eq:121open-and-close}, otherwise we set them as in \Cref{eq:121closeR}. For this assignment of angles we always obtain the desired boundary deficits. Similarly, when increasing $m$, if the added pair of syllables is $L^2R^2$ we repeat the assignment of angles in \Cref{eq:121shape3} with the first two angles swapped in each vector.
	
	Suppose now that $m\geq 2$ and $p\in\{0,1\}$. We only need to verify the claim that all tetrahedra not in the first or last two layers are of type (III). The angle equations for the interior edge classes alternate between,
	\begin{align}\label{eq:b3interior1}
		\begin{split}
			\theta_{3(i+k)} + 2\theta_{3(i+k+1)-1} + 2\theta_{3(i+k+2)-1} + 2\theta_{3(i+k+3)-1} + \theta_{3(i+k+4)} &=2\pi,\\
			\theta_{3(i+k+1)} + 2\theta_{3(i+k+2)-2} + \theta_{3(i+k+3)} &= 2\pi.
		\end{split}
	\end{align}
	and
	\begin{align}\label{eq:b3interior2}
		\begin{split}
			\theta_{3(i+k)} + 2\theta_{3(i+k+1)-2} + 2\theta_{3(i+k+2)-2} + 2\theta_{3(i+k+3)-2} + \theta_{3(i+k+4)} &=2\pi,\\
			\theta_{3(i+k+1)} + 2\theta_{3(i+k+2)-1} + \theta_{3(i+k+3)} &= 2\pi.
		\end{split}
	\end{align}
	for $k\geq 1$. In the case that \Cref{eq:b3interior1} is followed by \Cref{eq:b3interior2}, obtained by replacing $(i+k)$ in the latter equation with $(i+k+2)$, we can solve the equations by setting the relevant tetrahedra to have shape (III) as
	\begin{equation*}
		\begin{aligned}
			\angT{i+k+1} & = \angles{\frac{\pi}{4}}{\frac{\pi}{4}}{\frac{\pi}{2}} \\
			\angT{i+k+3} & = \angles{\frac{\pi}{4}}{\frac{\pi}{4}}{\frac{\pi}{2}} \\
		\end{aligned} \qquad \begin{aligned}
			\angT{i+k+2} & = \angles{\frac{\pi}{2}}{\frac{\pi}{4}}{\frac{\pi}{4}} \\
			\angT{i+k+4} & = \angles{\frac{\pi}{4}}{\frac{\pi}{2}}{\frac{\pi}{4}},
		\end{aligned}
	\end{equation*}
	where this sequence of angle vectors is repeated as necessary. If instead \Cref{eq:b3interior2} is followed by \Cref{eq:b3interior1} we simply swap $\angT{i+k+2}$ and $\angT{i+k+4}$ above. 
	
	\textbf{Multiple (B2) blocks of length one -- }
	Let $\Omega'=L(R^2L)^m$. When $m=2$ we have $\Omega'=LR^2LR^2L$ giving the following angle equations,
	\begin{align}\label{eq:121case2}
		\begin{split}
			\delta_0 + 2\theta_{3i-2} + 2\theta_{3(i+1)-2} + 2\theta_{3(i+2)-2} + \theta_{3(i+3)} &= 2\pi,\\
			\delta_1 + 2\theta_{3i-1} + \theta_{3(i+1)} &= 2\pi,\\
			\delta_2 + \theta_{3i} &= 2\pi,\\
			\theta_{3i} + 2\theta_{3(i+1)-1} + \theta_{3(i+2)} &= 2\pi,\\
			\theta_{3(i+1)} + 2\theta_{3(i+2)-1} + 2\theta_{3(i+3)-1} + \theta_{3(i+4)} &= 2\pi,\\
			\theta_{3(i+2)} + 2\theta_{3(i+3)-2} + 2\theta_{3(i+4)-2} + 2\theta_{3(i+5)-2} + \theta_{3(i+6)} &= 2\pi,\\
			\theta_{3(i+3)} + 2\theta_{3(i+4)-1} + \theta_{3(i+5)} &= 2\pi,\\
			\theta_{3(i+4)} + 2\theta_{3(i+5)-1} + 2\theta_{3(i+6)-1} + \epsilon_1 &= 2\pi,\\
			\theta_{3(i+5)} + 2\theta_{3(i+6)-2} + \epsilon_0 &= 2\pi,\\
			\theta_{3(i+6)} + \epsilon_2 &=2\pi.
		\end{split}
	\end{align}
	
	Noting that $\Omega'$ ends on $L$, we set $\Theta^{(i)}$, $\Theta^{(i+1)}$, $\Theta^{(i+5)}$ and $\Theta^{(i+6)}$ as in \Cref{eq:121open-and-close} in order to obtain tetrahedra of types (I), (VI), (IV) and (II), respectively. To obtain tetrahedra of type (III) we set,
	\begin{equation}
		\angT{i+2} = \angles{\frac{\pi}{4}}{\frac{\pi}{4}}{\frac{\pi}{2}} \qquad \text{and} \qquad
		\angT{i+3} = \angles{\frac{\pi}{4}}{\frac{\pi}{2}}{\frac{\pi}{4}}.
	\end{equation}
	Note that the part of $\Omega'$ these angles are associated to is the first $R^2L$ and hence the $\pi/2$ appears as the second angle in $\Theta^{(i+3)}$, rather than as the first angle. Solving the remaining equations gives us a layer of shape (VIII) as
	\begin{equation}\label{eq:121shapeVIII}
		\angT{i+4} = \angles{\frac{\pi}{8}}{\frac{5\pi}{8}}{\frac{\pi}{4}}.
	\end{equation}
	Substituting these angles into \Cref{eq:121case2} results in the following boundary deficits,
	\begin{equation*}
		\delta_0 = \epsilon_1 =\frac{\pi}{3}, \qquad
		\delta_1 = \epsilon_0 =\pi\qquad\text{and}\qquad
		\delta_2 = \epsilon_2 = \frac{5\pi}{3}.
	\end{equation*}
	Suppose that for some $m>2$ we can assign angles as above with tetrahedra shapes (I) and (VI) appearing at the start of the block, shapes (IV) and (II) appearing at the end of the block and all other tetrahedra having shapes (III) and (VIII) following the pattern of $\Theta^{(i+2)},\Theta^{(i+3)}$ and $\Theta^{(i+4)}$ above. Consider $\Omega' = L(R^2L)^{m+1}$. The modified and new angle equations are
	\begin{align}\label{eq:modandnew}
		\begin{split}
			\theta_{3(i+3m-1)} + 2\theta_{3(i+3m)-1} + 2\theta_{3(i+3m+1)-1} + \theta_{3(i+3m+2)} &= 2\pi,\\
			\theta_{3(i+3m)} + 2\theta_{3(i+3m+1)-2} + 2\theta_{3(i+3m+2)-2} + 2\theta_{3(i+3m+3)-2} + \theta_{3(i+3m+4)} &= 2\pi,\\
			\theta_{3(i+3m+1)} + 2\theta_{3(i+3m+2)-1} + \theta_{3(i+3m+3)} &= 2\pi,\\
			\theta_{3(i+3m+2)} + 2\theta_{3(i+3m+3)-1} + 2\theta_{3(i+3m+4)-1} + \epsilon_1 &= 2\pi,\\
			\theta_{3(i+3m+3)} + 2\theta_{3(i+3m+4)-2} + \epsilon_0 &=2\pi,\\
			\theta_{3(i+3m+4)} + \epsilon_2 &= 2\pi.
		\end{split}
	\end{align}
	By our assumption $\Delta_{i+3m-1}$ has shape (VIII). We set $\Delta_{i+3m+3}$ and $\Delta_{i+3m+4}$ to have shapes (IV) and (II), respectively, as in \Cref{eq:121open-and-close}. Substituting these values into the above equations we find
	\begin{equation*}
		\begin{alignedat}{9}
			\text{(III)} & \quad & \angT{i+3m} &= \angles{\frac{\pi}{4}}{\frac{\pi}{4}}{\frac{\pi}{2}}, &\qquad&
			\text{(III)} & \quad & \angT{i+3m+1} &= \angles{\frac{\pi}{4}}{\frac{\pi}{2}}{\frac{\pi}{4}}, \\
			\text{(VIII)} & \quad & \angT{i+3m+2} &= \angles{\frac{\pi}{8}}{\frac{5\pi}{8}}{\frac{\pi}{4}}. & {} & {} & {} & {} & {}
		\end{alignedat}
	\end{equation*}
	This assignment of angles gives the desired $\epsilon$-boundary deficit of $\epsilon_0=\pi$, $\epsilon_1=\pi/3$ and $\epsilon_2 = 5\pi/3.$
	
	\textbf{At least two (B2) blocks with arbitrary length --}
	The final case combines the previous two cases. Starting from the second case, we claim that increasing the length of any (B2) block adds additional tetrahedra of shape (III) whilst preserving the shapes of the remaining tetrahedra. Note that it follows from the preceding arguments that if there are at least two (B2) blocks, then increasing the length of the first or last (B2) block does not affect the equations containing the first or last boundary deficit, respectively, up to permuting the first two angles.
	
	Suppose $\Omega' = L(R^2L)^m$ with $m\geq 2$ with shapes described above. Consider the subword $\Omega''=LR^2$ consisting of the first two syllables of $\Omega'$ and the relevant subcomplex $\mathcal{S}''\subset\mathcal{S}$. Then $\mathcal{S}''$ has boundary deficits along the edges which glue to $\mathcal{S}\backslash\mathcal{S}''$. We show that these deficits are preserved up to permutation of the first two angles if we extend the (B2) block in $\Omega''$.
	
	By assumption $\Delta_i$, $\Delta_{i+1}$ and $\Delta_{i+2}$ have shapes (I), (VI) and (III), respectively, assigned as in the previous case. Substituting into lines 1, 5 and 6 of \Cref{eq:121case2}, which are the angle equations containing the deficits in $\mathcal{S}''$, we have,
	\begin{align*}
		\delta_0 + \frac{14\pi}{24} + \frac{2\pi}{6} + \frac{\pi}{2} + \mu_0 & = 2\pi, \\
		\frac{\pi}{4} + \frac{2\pi}{4} + \mu_1                               & = 2\pi, \\
		\frac{\pi}{2} + \mu_2                                                & = 2\pi,
	\end{align*}
	where $\mu_0,\mu_1,\mu_2$ denote the angle deficits on $\mathcal{S}''$, replacing the angles from $\angT{k}$ for $k\geq i+3$ in \Cref{eq:121case2}. Since $\delta_0 = \pi/3$ we solve the above equations as
	\begin{equation*}
		\mu_0 = \frac{\pi}{4},\qquad \mu_1 = \frac{5\pi}{4} \qquad\text{and}\qquad \mu_2 = \frac{3\pi}{2}.
	\end{equation*}
	Suppose we now extend the length of the (B2) block in $\Omega''$ by one. This corresponds to a subword of $\Omega' = (LR^2L^2)(RL^2)^q R$ with $q = m-1$. By assumption we have $\angT{i}$ and $\angT{i+1}$ as above and further that $\angT{i+2},\angT{i+3}$ and $\angT{i+4}$ have shape (III) assigned as
	\begin{equation*}
		\angT{i+2} = \left(\frac{\pi}{4}, \frac{\pi}{4}, \frac{\pi}{2}\right),\quad \angT{i+3} = \left(\frac{\pi}{2},\frac{\pi}{4},\frac{\pi}{4}\right),\quad\text{and}\quad \angT{i+4} = \left(\frac{\pi}{4},\frac{\pi}{4},\frac{\pi}{2}\right).
	\end{equation*}
	The relevant equations containing the new deficits of $\mathcal{S}''$ are 
	\begin{align*}
		\theta_{3(i+1)} + 2\theta_{3(i+2)-1} + 2\theta_{3(i+3)-1} + 2\theta_{3(i+4)-1} + \eta_0 &= 2\pi, \\
		\theta_{3(i+3)} + 2\theta_{3(i+4)-2} + \eta_1 &= 2\pi, \\
		\theta_{3(i+4)} + \eta_2 &=2\pi,
	\end{align*}
	where $\eta_0,\eta_1,\eta_2$ denote the new deficits on $\mathcal{S}''$. Substituting the angles in we have
	\begin{align*}
		\frac{\pi}{4} + \frac{2\pi}{4} + \frac{2\pi}{4} + \frac{2\pi}{4}+ \eta_0 & = 2\pi, \\
		\frac{\pi}{4} + \frac{2\pi}{4} + \eta_1                                  & = 2\pi, \\
		\frac{\pi}{2} + \eta_2                                                   & = 2\pi,
	\end{align*}
	Solving the above equations we see that
	\begin{equation*}
		\eta_0 = \frac{\pi}{4},\qquad \eta_1 = \frac{5\pi}{4} \qquad\text{and}\qquad \eta_2 = \frac{3\pi}{2}.
	\end{equation*}
	Since $\mu_i=\eta_i$ for each $i=0,1,2$, extending any (B2) block in $\Omega'$ must introduce ideal tetrahedra with shape (III). The remaining shapes of the tetrahedra in both instances remain the same. However, after we extend the (B2) block we must swap the first two angles in each remaining $\angT{j}$ as described in Cases 1 and 2 above. We also observe that the angle deficits seen here are the same if we follow this procedure at the end of any (B2) block in $\Omega'$.
\end{proof}

Given the angle structures on (B1) and (B3) blocks we now check that these angle structures are valid when we glue a (B1) block to a (B3) block.

\begin{lemma}\label{lem:B1andB3}
	The angle structures on (B1) and (B3) blocks are compatible. More precisely, a (B1) block can always be glued to either the start or the end of a (B3) block.
\end{lemma}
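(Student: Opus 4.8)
The plan is to reduce the lemma to a short arithmetic check on boundary deficits. By \Cref{lem:angleB1} and \Cref{lem:angleB3}, the $\delta$- and $\epsilon$-boundary deficit of a block depends only on the first, respectively the last, letter of the block, and the two lemmas produce \emph{identical} triples:
\begin{align*}
\text{first/last letter }L:\quad &\delta=\angles{\tfrac{\pi}{3}}{\pi}{\tfrac{5\pi}{3}},\quad \epsilon=\angles{\pi}{\tfrac{\pi}{3}}{\tfrac{5\pi}{3}},\\
\text{first/last letter }R:\quad &\delta=\angles{\pi}{\tfrac{\pi}{3}}{\tfrac{5\pi}{3}},\quad \epsilon=\angles{\tfrac{\pi}{3}}{\pi}{\tfrac{5\pi}{3}}.
\end{align*}
Hence, from the point of view of the boundary data a (B3) block is indistinguishable from a (B1) block, and it suffices to analyse one generic junction between two consecutive blocks in a decomposition as in \Cref{lem:decomposition}. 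Gluing a (B1) block onto the start of a (B3) block is such a junction (with the (B1) block on the left), and gluing it onto the end is the same junction with the roles of the two blocks reversed, so both assertions in the statement are covered at once.

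Next I would determine how the three boundary edge classes at the end of the first block get identified with the three at the start of the second. Since the syllables of $\Omega'$ alternate between $L$ and $R$, a block ending in $L$ is always followed by a block beginning with $R$, and conversely. The last crossing of the first block is the twist through which its final layer is layered onto the initial layer of the next block, and by \Cref{fig:spheretriangulation} and \Cref{fig:tetrahedralayer} this twist acts on the three boundary classes by the transposition $\sigma$ that exchanges the diagonal class with its twist partner (the horizontal class at an $L$-junction, the vertical class at an $R$-junction) and fixes the third class. Along the merged edge class obtained by identifying the $j$-th boundary class of the first block with the $\sigma(j)$-th boundary class of the second, the total interior dihedral angle equals $(2\pi-\epsilon_j)+(2\pi-\delta_{\sigma(j)})$, so condition (3) of \Cref{defn:angle-structure} on that class is exactly the identity $\epsilon_j+\delta_{\sigma(j)}=2\pi$.

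I would then carry out the check. At an $L$-junction the triple $\angles{\pi}{\tfrac{\pi}{3}}{\tfrac{5\pi}{3}}$ plays both the $\epsilon$- and the $\delta$-role, $\sigma$ fixes the first coordinate and swaps the last two, and $\pi+\pi=2\pi$, $\tfrac{\pi}{3}+\tfrac{5\pi}{3}=2\pi$, $\tfrac{5\pi}{3}+\tfrac{\pi}{3}=2\pi$; the $R$-junction is identical after the corresponding relabelling of coordinates. Thus every merged boundary class satisfies condition (3). Conditions (1) and (2) hold on every tetrahedron because all shapes appearing in \Cref{lem:angleB1} and \Cref{lem:angleB3} have their three dihedral angles in $(0,\pi)$ and summing to $\pi$, and every edge class interior to one of the two blocks still satisfies condition (3) by the respective lemma. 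Finally, the three boundary classes at the free end of the (B1) block and at the free end of the (B3) block are untouched by the gluing, so the amalgamated subcomplex still presents the boundary deficits predicted by \Cref{lem:angleB1} and \Cref{lem:angleB3} and may in turn be glued to further blocks. This yields the required strict angle structure and proves the lemma.

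The one genuinely delicate point is pinning down $\sigma$, i.e.\ reading off from \Cref{fig:spheretriangulation} and \Cref{fig:tetrahedralayer} precisely which boundary edge class of the first block is identified with which of the second; the naive coordinate-wise identification already fails (since $\tfrac{\pi}{3}+\tfrac{\pi}{3}\neq 2\pi$), so the transposition coming from the twist at the junction is essential. Everything else is either a direct appeal to \Cref{lem:angleB1} or \Cref{lem:angleB3} or the arithmetic above.
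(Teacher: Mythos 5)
Your proposal is correct and follows essentially the same route as the paper's proof: both reduce the lemma to checking that, because consecutive blocks begin with opposite letters, the $\epsilon$- and $\delta$-deficit triples agree, and that under the edge identification induced by the twist at the junction (which fixes one boundary class and transposes the diagonal class with its twist partner) the paired deficits sum to $2\pi$. The only quibble is a labelling wobble in your general description of $\sigma$ (whether the junction is named after the last letter of the first block or the first letter of the second), but your explicit arithmetic check uses the correct transposition -- fixing the first coordinate and swapping the last two -- and matches the paper's identities $\epsilon_0+\delta_0=\epsilon_2+\delta_1=\epsilon_1+\delta_2=2\pi$ exactly.
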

\begin{proof}
	Denote the first and last letters of a (B1) and (B3) block by $\alpha_{(B1)}$, $\alpha_{(B3)}$, and $\omega_{(B1)}$, $\omega_{(B3)}$, respectively.
	Let $\epsilon_0$, $\epsilon_1$ and $\epsilon_2$ denote the boundary deficits at the end of the (B1) block as in \Cref{fig:boundarypattern}. Similarly, let $\delta_0$, $\delta_1$ and $\delta_2$ be the boundary deficits at the start of the (B3) block. One key take away from \Cref{lem:angleB1,lem:angleB3} is that the $\epsilon$-boundary deficit of a (B1) block equals the $\delta$-boundary deficit of a block of (B3) as long as $\omega_{(B1)} \neq \alpha_{(B3)}$. In particular, we have $\delta_i = \epsilon_i$, $0 \leq i \leq 2$ whenever $\omega_{(B1)} \neq \alpha_{(B3)}$. The same is true for the end of a (B3) block and the start of a (B1) block.
	
	For the angle structures from the blocks to extend to a valid angle structure after gluing, the boundary angles of the identified edges must sum to $2\pi$. If $\omega_{(B1)} = L$ and $\alpha_{(B3)}=R$ we have identifications leading to $\epsilon_0 + \delta_0 = \epsilon_2 + \delta_1 = \epsilon_1 + \delta_2 = 2 \pi$. To see this note that, since $\alpha_{(B3)}=R$, this layering swaps vertical and diagonal edges (indices $1$ and $2$). Since, following \Cref{lem:angleB1,lem:angleB3} we have $\epsilon_0 = \delta_0 = \pi$, $\epsilon_1 = \delta_1 = \pi/3$, and $\epsilon_2 = \delta_2 = 5\pi/3$, these identities hold. If instead we have  $\omega_{(B1)} = R$ and $\alpha_{(B3)} = L$, we have $\epsilon_0 + \delta_2 = \epsilon_1 + \delta_1 = \epsilon_2 + \delta_0 = 2 \pi$. Again, following \Cref{lem:angleB1,lem:angleB3}, we have $\epsilon_0 = \delta_0 = \pi/3$, $\epsilon_1 = \delta_1 = \pi$, $\epsilon_2 = \delta_2 = 5\pi/3$, and the equations are satisfied.
	
	An analogous argument shows that the angle structures are compatible when gluing the end of a (B3) block to the start of a (B1) block.
\end{proof}

\begin{lemma}\label{lem:B1B3Closure}
	Let $\Omega = RL^{a_1}\cdots(R^{a_n}L\mid L^{a_n}R)$ with $n\geq1$ be a word such that $\Omega' = L^{a_1}\cdots(R^{a_n}\mid L^{a_n})$ can be decomposed into only (B1) and (B3) blocks. Let $\tri$ be the Sakuma-Weeks triangulation of the associated $2$-bridge link complement consisting of $\Sigma=1+\sum a_i$ layers. There is an angle structure on $\tri$ such that each block has the angles assigned as in \Cref{lem:angleB1} and \Cref{lem:angleB3} and $\Delta_1$ has shape (V). If the last block in $\Omega'$ is of type (B1), then $\Delta_\Sigma$ has shape (V). If the last block of $\Omega'$ is of type (B3), then $\Delta_{\Sigma-1}$ has shape (III) and $\Delta_\Sigma$ has shape (VIII). In both cases, $\Delta_\Sigma$ replaces the last tetrahedron in the final block.
\end{lemma}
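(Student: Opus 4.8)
The plan is to produce a strict angle structure $\Theta\in\ang(\tri)$ by pasting together the block angle structures of \Cref{lem:angleB1,lem:angleB3} on the interior of $\tri$ with an ad hoc assignment on the two layers adjacent to the closing crossings, and then to verify condition (3) of \Cref{defn:angle-structure} on the edge classes that are not already handled by those lemmas. First I would record the block decomposition. Since $\Omega'$ decomposes into (B1) and (B3) blocks only, \Cref{lem:decomposition} forces $a_1=a_n=1$; write $\Omega'=B_1B_2\cdots B_r$ for its maximal blocks, each of type (B1) or (B3) with consecutive ones of different type, and note that $B_1$ begins with $L$ (as $\Omega$ begins $RL\cdots$). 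The $\Sigma-1=\sum a_i$ crossings of $\Omega'$ correspond to the layers $\Delta_2,\dots,\Delta_\Sigma$; assigning to each $B_j$ the angles of \Cref{lem:angleB1} or \Cref{lem:angleB3} gives a consistent assignment on this subcomplex, and by \Cref{lem:B1andB3} the $\epsilon$-deficit of $B_j$ cancels the $\delta$-deficit of $B_{j+1}$ for $j<r$. Thus every interior edge class of a block, and every inter-block edge class, already sums to $2\pi$; the only incomplete edge classes are the three $\delta$-edges of $B_1$, with deficit $(\pi/3,\pi,5\pi/3)$ by \Cref{lem:angleB1,lem:angleB3}, and the three $\epsilon$-edges of $B_r$ (notation as in \Cref{fig:boundarypattern}).

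Next I would handle $\Delta_1$, the remaining layer. It sits between the leading $R$-crossing of $\Omega$ and the first crossing of $\Omega'$, and its outward faces are folded together by the exterior closing crossing, the resulting pattern being that of the innermost closing for a word ending in $R$ (\Cref{fig:closingoff}, left), as recorded in \Cref{sec:SW}. Reading off the identifications, the edge classes meeting $\Delta_1$ split into three that are completed internally by the fold and three that $\Delta_1$ shares with $B_1$. Writing the six corresponding edge equations and substituting shape (V) $=\Delta(\pi/6,\pi/2,\pi/3)$ for $\Delta_1$, one checks that each sums to $2\pi$: the deficit $(\pi/3,\pi,5\pi/3)$ recorded above is exactly what is needed on the three shared edges, and the folded edges close up. Hence the $\delta$-edges of $B_1$ become complete.

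It remains to close off the three $\epsilon$-edges of $B_r$. The same phenomenon occurs at the innermost closing crossing: these are completed in $\tri$ by the fold, not by a further block, so the terminal tetrahedra of $B_r$ must be re-chosen. If $B_r$ is of type (B1), \Cref{lem:angleB1} gave $\Delta_\Sigma$ the regular shape (0); I would replace it by shape (V) and verify, by symmetry with the $\Delta_1$ computation, that the folded $\epsilon$-edges and the interior edge of $B_r$ incident to $\Delta_\Sigma$ now sum to $2\pi$. If $B_r$ is of type (B3), \Cref{lem:angleB3} gave its last two layers shapes (IV) and (II); I would replace them by shape (III) $=\Delta(\pi/4,\pi/4,\pi/2)$ on $\Delta_{\Sigma-1}$ and shape (VIII) $=\Delta(\pi/8,\pi/4,5\pi/8)$ on $\Delta_\Sigma$, then verify by direct substitution into the few interior and folded edge equations of $B_r$ that all sum to $2\pi$. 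Since no tetrahedron before $\Delta_{\Sigma-1}$ is altered, every previously verified edge equation persists; hence every edge class of $\tri$ has angle sum $2\pi$, and as each shape used has its three angles in $(0,\pi)$ with sum $\pi$, this gives $\Theta\in\ang(\tri)$, which is the assertion.

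The main obstacle is the combinatorial bookkeeping at the two ends: extracting from the closing-off pictures exactly which edges of $\Delta_1$, $\Delta_{\Sigma-1}$ and $\Delta_\Sigma$ are identified by the exterior and interior closing crossings, and how these match the boundary edges of the adjacent block. Once those edge equations are written down, the remaining verification is a short arithmetic check, and the two end-cases for $B_r$ are the only places where genuinely new shapes ((V) and (VIII), together with (III)) must be introduced.
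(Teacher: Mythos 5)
Your proposal follows essentially the same route as the paper: paste the block angle structures together using the matching boundary deficits of \Cref{lem:B1andB3}, close off the start by reading the exterior-crossing identifications from \Cref{fig:closingoff} and assigning shape (V) to $\Delta_1$ against the deficit $(\pi/3,\pi,5\pi/3)$, and close off the end by replacing the terminal tetrahedra of the last block with shape (V) in the (B1) case and with shapes (III) and (VIII) in the (B3) case, checking the few affected edge equations (including the one linking $\Delta_{\Sigma-1}$ back to $\Delta_{\Sigma-2}$). The arithmetic you defer is exactly the substitution the paper carries out, so the argument is correct and matches the paper's proof.
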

\begin{proof}
	From \Cref{lem:angleB1,lem:angleB3} we have the $\delta$-boundary deficit at the start of the first block
	\begin{equation*}
		\delta_0 = \frac{\pi}{3},\qquad \delta_1 = \pi \quad\text{and}\quad \delta_2 = \frac{5\pi}{3}.
	\end{equation*}
	From \Cref{fig:closing-off-a} we equate this boundary deficit to edges in $\Delta_1$ as,
	\begin{equation*}
		\delta_0 = \theta_{3(1)}, \qquad
		\delta_1 = 2\theta_{3(1)-1}\qquad\text{and}\qquad
		\delta_2 = 4\theta_{3(1)-2} + 2\theta_{3(1)} + \theta_{3(2)}.
	\end{equation*}
	As $\Delta_2$ is the first layer in a (B1) or (B3) block, we have $\theta_{3(2)} = \pi/3$ (see \Cref{eq:121open-and-close} for the case of (B3)). Assigning shape (V) to $\Delta_1$ as
	\begin{equation*}
		\Theta^{(1)} = \left(\theta_1,\theta_2,\theta_3\right) = \left(\frac{\pi}{6},\frac{\pi}{2},\frac{\pi}{3}\right)
	\end{equation*}
	solves the boundary angle equations. Note that extending the length of the block in the case of (B1) replaces the $\delta$-boundary deficit with multiples of $\pi/3$ leaving the above assignment of $\Theta^{(1)}$ to satisfy the equations. 
	
	We now consider the shape of $\Delta_\Sigma$. Suppose that $\Omega=R\cdots R^{a_{n-1}}L^{a_n}R$ so that $n$ is odd. From \Cref{fig:tetrahedralayer,fig:closing-off-b} (using the case of $n$ odd),  the angle equations for edges in $\Delta_\Sigma$ are
	\begin{align}\label{eq:lastlayereqns}
		\begin{split}
			4\theta_{3\Sigma-2} + 2\theta_{3\Sigma} + 2\theta_{3(\Sigma-1)} &= 2\pi,\\
			x + 2\theta_{3\Sigma-1} &= 2\pi,
		\end{split}
	\end{align}
	where $x$ denotes the angle deficit on the vertical edge class of $\Delta_\Sigma$. To verify this, we note that the four horizontal edges and two diagonal edges in $\Delta_{\Sigma}$ are identified and, given $\Omega$, must also include both diagonal edges from the previous layer. This gives the first equation. To obtain the second equation, note that we have two distinct edge classes for the four vertical edges in $\Delta_\Sigma$ which also include some edges from previous layers. We consider two cases for the values of $a_{n-1}$ and $a_n$.
	
	\textbf{Case 1: }
	Suppose that $a_{n} = a_{n-1} = 1$ so that $\Omega'$ ends on a (B1) block. Note that $R^{a_{n-1}}$ either corresponds to a part of the same (B1) block or to the last syllable of a (B3) block. From \Cref{lem:angleB1,lem:angleB3} we know that we have $\theta_{3(\Sigma-1)} = \pi/3$ in both of these cases. Using this, the first line of \Cref{eq:lastlayereqns} above is satisfied by $\theta_{3\Sigma-2} = \pi/6$ and $\theta_{3\Sigma} = \pi/3$.
	
	We now verify that $\theta_{3\Sigma-1}=\pi/2$. This angle corresponds to the vertical edge class of $\Delta_\Sigma$. Remove $L^{a_n}$ from $\Omega'$ and consider it as a (B1) block of length one. Then $R^{a_{n-1}}$ is the end of either a (B1) or (B3) block. We know that the vertical edge class of $\Delta_\Sigma$ glues to the vertical edge class of $\Delta_{\Sigma-1}$ and hence it suffices to check that the angle deficit in $\Delta_{\Sigma-1}$ is $\pi$. It follows from \Cref{lem:angleB1,lem:angleB3} that the angle deficit of the vertical edge class for a (B1) or (B3) block ending on $R$ is indeed $\pi$, and hence $\angT{\Sigma}$ has shape (V).
	
	\textbf{Case 2: }
	Suppose now that $\Omega$ has the same form as above with $a_{n-1} = 2$ and $a_n=1$ so that $\Omega'$ ends on a (B3) block. The angles in the first equation from \Cref{eq:lastlayereqns} remain the same, however we can determine $x$ explicitly from \Cref{fig:tetrahedralayer} as
	\begin{equation*}
		x = \theta_{3(\Sigma-2)} + 2\theta_{3(\Sigma-1)-1}.
	\end{equation*}
	It must be that $\Delta_{\Sigma-2}$ has shape (VI) or (VIII). From \Cref{eq:121open-and-close,eq:121shapeVIII} we must have that $\theta_{3(\Sigma-2)} = \pi/4$. Substituting in the second line of \Cref{eq:lastlayereqns} yields
	\begin{equation*}
		2\theta_{3(\Sigma-1)-1} + 2\theta_{3\Sigma-1} = \frac{7\pi}{4}.
	\end{equation*}
	This is solved by setting $\theta_{3(\Sigma-1)-1} = \pi/4$ and $\theta_{3\Sigma-1} = 5\pi/8$. Solving the remaining equation gives $\theta_{3\Sigma-2} = \pi/8$, $\theta_{3\Sigma} = \pi/4$ and $\theta_{3(\Sigma-1)} = \pi/2$. This gives shapes (III) and (VIII) as
	\begin{equation*}
		\angT{\Sigma-1} = \angles{\frac{\pi}{4}}{\frac{\pi}{4}}{\frac{\pi}{2}} \quad\text{ and }\quad \angT{\Sigma} = \angles{\frac{\pi}{8}}{\frac{5\pi}{8}}{\frac{\pi}{4}}.
	\end{equation*}
	To check compatability we consider the one remaining angle equation which interacts with these shapes,
	\begin{equation*}
		\theta_{3(\Sigma-3)} + 2\theta_{3(\Sigma-2)-1} + \theta_{3(\Sigma-1)} = 2\pi.
	\end{equation*}
	For these shapes to be compatible we require $\theta_{3(\Sigma-3)} + 2\theta_{3(\Sigma-2)-1} = 7\pi/4$. If $\Delta_{\Sigma-2}$ has shape (VI), then \Cref{eq:121open-and-close} gives us $\theta_{3(\Sigma-2)-1} = 7\pi/12$ and $\theta_{3(\Sigma-3)} = \pi/3$. Otherwise both $\Delta_{\Sigma-2}$ and $\Delta_{\Sigma-3}$ have shape (III) and we have $\theta_{3(\Sigma-2)-1} = \pi/2$ and $\theta_{3(\Sigma-3)} = \pi/2$ and conclude that the shapes are compatible.
\end{proof}
We now have valid angle structures for the Sakuma-Weeks triangulation of any $2$-bridge link described by a word $\Omega = RLR^{a_2}\cdots(LR\mid RL)$ such that $a_2,\dots,a_{n-1}\in\{1,2\}$ -- that is, the subword obtained by removing the first and last letter can be decomposed into only (B1) and (B3) blocks.

We now examine the angle structures on (B2) blocks. Recall from \Cref{lem:decomposition} that a (B2) block must occur as the first or last block in the word. We find an angle structure on these blocks which is compatible with (B1) and (B3) blocks on one boundary and satisfies the face identifications corresponding to the first or last letter of $\Omega$ on the other boundary. For reference, we record the $\delta-$ and $\epsilon-$boundary deficits of (B1) and (B3) blocks here.
\begin{equation*}
	\arraycolsep=7pt\def\arraystretch{1.6}
	\begin{array}{|ccc|c|ccc|}
		\hline
		\delta_0 & \delta_1 & \delta_2 &   & \epsilon_0 & \epsilon_1 & \epsilon_2 \\ \hline
		\pi/3    & \pi      & 5\pi/3   & L & \pi        & \pi/3      & 5\pi/3     \\
		\pi      & \pi/3    & 5\pi/3   & R & \pi/3      & \pi        & 5\pi/3     \\ \hline
	\end{array}
\end{equation*}

\begin{lemma}\label{lem:angleB2start}
	Let $\tri$ be the Sakuma-Weeks triangulation of a $2$-bridge link complement associated to the word $\Omega=RL^{a_1}\cdots(L^{a_n}R\mid R^{a_n}L)$ with $a_i=2$ for $1\leq i\leq k$. The subcomplex $\mathcal{S}\subset\tri$ corresponding to the (B2) block at the start of $\Omega'$ can be constructed using only ideal tetrahedra of shape types (I), (III) and (VI) arranged as
	\begin{center}
		\begin{tikzpicture}
			\node (III) {\big[(III)\big]$\big.^{2(k-1)}$};
			\node[right=of III] (VI) {(VI)};
			\node[right=of VI] (I) {(I)};
			
			\draw[->] (III) -- (VI);
			\draw[->] (VI) -- (I);
		\end{tikzpicture}
	\end{center}
	The block is compatible with $\Delta_1$ having shape (VII) and the end of the block is compatible with (B1) and (B3) blocks.
\end{lemma}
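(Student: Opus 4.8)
The plan is to run the same machine used in the proofs of \Cref{lem:angleB1,lem:angleB3,lem:B1B3Closure}: locate the layers of $\tri$ carrying the subcomplex $\mathcal{S}$, read off the angle equations of its interior and boundary edge classes from \Cref{fig:tetrahedralayer,fig:closingoff}, and present the shape assignment in the statement as an explicit solution, recording the $\delta$- and $\epsilon$-boundary deficits it produces. For the indexing: with $\Omega=RL^{a_1}\cdots$ and $a_1=\dots=a_k=2$, the subword $\Omega'$ opens with the (B2) block $L^2R^2L^2\cdots$ of length $k$, whose syllables occupy crossings $2,\dots,2k+1$ of $\Omega$ and hence carry the layers $\Delta_2,\dots,\Delta_{2k+1}$, while $\Delta_1$ records the face identifications on $S_1\times\{1\}$ forced by the leading $R$ of $\Omega$ and is dealt with separately, exactly as $\Delta_1$ is in \Cref{lem:B1B3Closure}. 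As elsewhere in \Cref{sec:min}, I would fix the alternation pattern by assuming the first syllable of the block is $L^2$; since the syllables of a (B2) block alternate between $L^2$ and $R^2$, a syllable $R^2$ merely swaps $\theta_{3j-2}\leftrightarrow\theta_{3j-1}$ on the layers it carries, and (as in the proof of \Cref{lem:angleB3}) this swap propagates the boundary deficits correctly, so describing one pattern suffices.

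The main step is an induction on $k$. For the base case $k=1$ the block is the single syllable $L^2$ carrying $\Delta_2,\Delta_3$; then there are no interior (III)-layers and the claim is that $\Delta_2$ has shape (VI) and $\Delta_3$ shape (I). I would write the edge equations for $\Delta_2,\Delta_3$ off \Cref{fig:tetrahedralayer}, substitute $\angT{2}=\angles{\pi/6}{7\pi/12}{\pi/4}$ and $\angT{3}=\angles{7\pi/24}{3\pi/8}{\pi/3}$ (the vectors already used for shapes (VI) and (I) in \Cref{eq:121open-and-close}), and read off the $\epsilon$-deficit; it should equal the $\epsilon$-deficit of an $L$-terminated (B1) or (B3) block. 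For the inductive step $k\to k+1$ the block gains one further syllable, of the opposite letter, hence two more layers; mimicking the ``increasing $m$'' passages of \Cref{lem:angleB3}, the two new layers take shape (III) with vectors $\angles{\pi/4}{\pi/4}{\pi/2}$ and $\angles{\pi/4}{\pi/2}{\pi/4}$ (or with their first two entries swapped, per parity), slotted in after the existing run of (III)-layers while the (VI) and (I) layers move outward. The point to verify — the analogue of the $\mu_i=\eta_i$ check in \Cref{lem:angleB3} — is that the two new interior edge equations have the form $\theta+2\theta'+2\theta''+\theta'''=2\pi$ and close up with every angle in $\{\pi/4,\pi/2\}$, and that the deficits handed to $\Delta_1$ on the inner side and to the next block on the outer side are unchanged up to a swap of their first two entries.

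It remains to check the two interface claims. For the outer boundary I would reuse the argument of \Cref{lem:B1andB3}: the $\epsilon$-deficit of the (B2) block just computed is complementary — after the index permutation the layering induces — to the $\delta$-deficit of any (B1) or (B3) block, and since the last syllable of the (B2) block is $L^2$ or $R^2$ while the next block (which is of type (B1) or (B3) by \Cref{lem:decomposition}) begins with the opposite letter, the ``distinct abutting letters'' hypothesis there holds automatically, so the two angle structures glue. For the inner boundary I would feed the angles of $\Delta_2$ into the closing-off equations of \Cref{fig:closingoff}, which by \Cref{lem:B1B3Closure} read $\delta_0=\theta_3$, $\delta_1=2\theta_2$, $\delta_2=4\theta_1+2\theta_3+\theta_6$ with $(\theta_1,\theta_2,\theta_3)=\angT{1}$ and $\theta_6$ the third entry of $\angT{2}$ — arranged to equal $\pi/4$ whether $\Delta_2$ has shape (VI) (when $k=1$) or shape (III) (when $k\ge 2$) — and verify that the unique solution is $\angT{1}=\angles{\pi/8}{3\pi/8}{\pi/2}$, so $\Delta_1$ has shape (VII).

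The difficulty is not conceptual — the statement is of exactly the same type as \Cref{lem:angleB3,lem:B1B3Closure} — but lies in two pieces of bookkeeping. First, one must transcribe the interior and boundary edge equations for the alternating $L^2R^2\cdots$ pattern correctly from \Cref{fig:tetrahedralayer} and carry the alternating first-two-angle swap faithfully through the induction, including the degenerate case $k=1$ where the ``head'' of the block and its (VI)/(I) ``tail'' coincide. Second, one must check that the value of $\theta_6$ forced by the chosen shape of $\Delta_2$ is exactly the value that makes the three closing-off equations simultaneously consistent and pins $\Delta_1$ to shape (VII) and to no other tetrahedron — the one place at which the numerics of shapes (VI), (III) and (VII) have to agree.
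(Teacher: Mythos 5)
Your proposal is correct and follows essentially the same route as the paper: the paper likewise pins shapes (VI) and (I) on the two outermost layers to match the $\epsilon$-deficit $(\pi,\pi/3,5\pi/3)$ of the following block, fills the interior with (III)-layers alternating $(\pi/4,\pi/4,\pi/2)$ and $(\pi/4,\pi/2,\pi/4)$, and solves the closing-off equations $x_0=\theta_{3}$, $x_1=2\theta_{2}$, $x_2=4\theta_{1}+2\theta_{3}+\theta_{3(2)}$ with $\theta_{3(2)}=\pi/4$ to force shape (VII) on $\Delta_1$ — exactly your interface checks. The only (cosmetic) difference is direction: the paper works from the outer boundary deficit inward rather than inducting on $k$ from the base case, and note that the correct parity for the outermost layers is the first-two-entries-swapped version of the vectors in \Cref{eq:121open-and-close}, as your hedge anticipates.
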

\begin{proof}
	Let $\Omega' = (L^2R^2)^{m-1}L^2$ for $m\geq 1$ be the (B2) block. Denote the layers of this block as $\Delta_2,\dots,\Delta_{4m-1}$. The angle equations for the edges of $\Delta_{4m-2}$ and $\Delta_{4m-1}$ are,
	\begin{align}
		\begin{split}
			x_0 + 2\theta_{3(4m-2)-2} + \theta_{3(4m-1)} &= 2\pi,\\
			x_1 + 2\theta_{3(4m-2)-1} + 2\theta_{3(4m-1)-1} + \epsilon_1 &= 2\pi,\\
			x_2 + \theta_{3(4m-2)} &= 2\pi,\\
			\theta_{3(4m-2)} + 2\theta_{3(4m-1)-2} + \epsilon_0 &= 2\pi,\\
			\theta_{3(4m-1)} + \epsilon_2 &= 2\pi,
		\end{split}
	\end{align}
	where $x_0$, $x_1$ and $x_2$ denote angle deficits on the respective edge classes. For any angle structure on this block to be compatible with a (B1) or (B3) block we require that the $\epsilon$-boundary deficit on $\Delta_{4m-1}$ agrees with the $\delta$-boundary deficit of the (B1) or (B3) blocks - which must start on $R$. That is $\epsilon_i = \delta_i$ for $0\leq i \leq 2$. Hence $\epsilon_0 = \pi$, $\epsilon_1 = \pi/3$ and $\epsilon_2=5\pi/3$.
	
	Setting 
	\begin{equation*}
		\text{(VI)} \quad \angT{4m-2}=\angles{\frac{7\pi}{12}}{\frac{\pi}{6}}{\frac{\pi}{4}} \qquad\text{and}\qquad
		\text{(I)} \quad \angT{4m-1}= \angles{\frac{3\pi}{8}}{\frac{7\pi}{24}}{\frac{\pi}{3}}.
	\end{equation*}
	solves the last two equations and determines the angle deficits as
	\begin{equation}\label{eq:B2startdeficits}
		x_0 = \frac{\pi}{2},\qquad x_1=\frac{3\pi}{4}\quad\text{and}\quad x_2 =\frac{7\pi}{4}
	\end{equation}
	Suppose that $m=1$ so that the (B2) block is of length one. The angle deficits above must be satisfied by the angle equations from $\Delta_1$. From \Cref{fig:tetrahedralayer,fig:closing-off-a} we have,
	\begin{align}\label{eq:B2xdeficit}
		\begin{split}
			x_0 &= \theta_{3(1)},\\
			x_1 &= 2\theta_{3(1)-1},\\
			x_2 &= 4\theta_{3(1)-2} + 2\theta_{3(1)} + \theta_{3(2)}.
		\end{split}
	\end{align}
	Setting $\angT{1} = (\pi/8, 3\pi/8, \pi/2)$ gives $\Delta_1$ shape (VII) and satisfies the deficits in \Cref{eq:B2startdeficits}.
	
	Suppose now that $m>1$. We claim that the additional layers between $\Delta_1$ and $\Delta_{4m-2}$ all have shape (III). The angle equations are the same as those seen in the proof of \Cref{lem:angleB3} and so it suffices to check that this assignment is compatible with \Cref{eq:B2startdeficits} and assigning shape (VII) to $\Delta_1$.
	
	We first expand the angle deficits above as,
	\begin{align*}
		x_0 & = \theta_{3(4m-3)},                              \\
		x_1 & = \theta_{3(4m-4)} + 2\theta_{3(4m-3)-1},        \\
		x_2 & = y + 2\theta_{3(4m-4)-2} + 2\theta_{3(4m-3)-2}.
	\end{align*}
	We have $\pi/2 = x_0 = \theta_{3(4m-3)}$ and assume $\Delta_{4m-3}$ has shape (III), hence we assign $\angT{4m-3} = (\pi/4,\pi/4,\pi/2)$. By construction, $y$ must correspond to the horizontal edge class of the previous layer. Observe that $\Omega'$ repeats strings of $(L^2R^2)$. This corresponds to \Cref{eq:b3interior2} followed by \Cref{eq:b3interior1} and hence, from the reasoning following these equations, we have $\angT{4m-4} = (\pi/4,\pi/2,\pi/4)$. This forces $y = 3\pi/4$. From the proof of \Cref{lem:angleB3} (the case of one (B2) block) we know that $y=2\pi/4 + \pi/4$ and thus conclude that $\Delta_{4m-4}$ and $\Delta_{4m-3}$ have shape (III).
	
	To check that this assignment is compatible with $\Delta_1$ having shape (VII), we consider the following angle equations from $\Delta_2$ and $\Delta_3$, assuming $m>1$,
	\begin{align*}
		x_0 + 2\theta_{3(2)-2} + \theta_{3(3)}        & = 2\pi, \\
		x_1 + 2\theta_{3(2)-1} + 2\theta_{3(3)-1} + z & = 2\pi, \\
		x_2 + \theta_{3(2)}                           & = 2\pi,
	\end{align*}
	where the deficits $x_i$ are as in \Cref{eq:B2xdeficit}. Assigning $\angT{2} = (\pi/2,\pi/4,\pi/4)$ and $\angT{3} = (\pi/4,\pi/4,\pi/2)$ gives $\Delta_1$ shape (VII) as above if $z = \pi/4$. Note that $\Delta_4$ must have either shape (III) or shape (VI) and $z$ must correspond to the diagonal edge class. In either case, we have $\theta_{3(4)} = \pi/4$ and thus the shape assignment is compatible.
	
	An analogous argument applies if $\Omega' = (L^2R^2)^m$ for $m\geq 1$. Here, the first two angles in each layer are swapped.    
\end{proof}

\begin{lemma}\label{lem:angleB2end}
	Let $\tri$ be the Sakuma-Weeks triangulation of a $2$-bridge link complement associated to the word $\Omega=RL^{a_1}\cdots(L^{a_n}R\mid R^{a_n}L)$ with $a_i=2$ for $n-k< i\leq n$, $k\geq 2$, and let $\Sigma=1+\sum a_i$. The subcomplex $\mathcal{S}\subset\tri$ corresponding to the (B2) block at the end of $\Omega$ can be constructed using only ideal tetrahedra of shape types (III), (V) and (IX) arranged as
	\begin{center}
		\begin{tikzpicture}
			\node (X) {(IX)};
			\node[right=of X] (V) {\big[(V)\big]$\,^{3}$};
			
			\draw[->] (X) -- (V);
		\end{tikzpicture}
	\end{center}
	for $k=2$ and,
	\begin{center}
		\begin{tikzpicture}
			\node (X) {(IX)};
			\node[right=of X] (III) {\big[(III)\big]$\,^{2k-5}$};
			\node[right=of III] (V) {\big[(V)\big]$\,^4$};
			
			\draw[->] (X) -- (III);
			\draw[->] (III) -- (V);
		\end{tikzpicture}
	\end{center}
	for $k\geq 3$.
	The start of the block is compatible with blocks of types (B1) and (B3) and the end of the block is compatible with the face identifications on $\Delta_\Sigma$.
\end{lemma}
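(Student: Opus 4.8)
The plan is to adapt the argument of Lemma~\ref{lem:angleB2start}, reading the block from the inside out: here the (B2) block must meet a (B1) or (B3) block on its outer boundary and absorb the closing face identifications of the Sakuma-Weeks construction at the layer $\Delta_\Sigma$ on its inner boundary. Since the last $k$ syllables of $\Omega'$ have exponent $2$ and the syllables of a (B2) block alternate between $L^2$ and $R^2$, the terminal (B2) block, read in the direction of increasing layer index, alternates $R^2$ and $L^2$; as in the proof of Lemma~\ref{lem:angleB3} I would treat only the case in which the block ends on $L^2$ followed by the closing letter $R$ of $\Omega$, the other cases following by transposing $\theta_{3j-2}\leftrightarrow\theta_{3j-1}$ in every layer $\Delta_j$. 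Writing the $2k$ layers of the block as $\Delta_{\Sigma-2k+1},\dots,\Delta_\Sigma$, the proof then splits into: (i) solving the closing identification at $\Delta_\Sigma$; (ii) matching the outer $\delta$-boundary deficit to the $\epsilon$-boundary deficit of a (B1) or (B3) block; and (iii) filling the interior with layers of shape (III).

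For (i) I would use Figure~\ref{fig:closingoff}: the face identifications closing off the triangulation give equations relating the angles of $\Delta_\Sigma$ to those of the last few layers, of the same form as~\eqref{eq:lastlayereqns}. Solving these together with the interior edge equations of the final $L^2R^2$ portion of the block, I expect to be forced into shape (V)$=\Delta(\pi/6,\pi/2,\pi/3)$ for $\Delta_\Sigma$ and for the three layers before it (with the next layer of shape (III)) when $k\geq 3$; running the same computation on the shorter configuration available when $k=2$ should instead leave only two more (V) layers before $\Delta_\Sigma$, with the (IX) layer adjacent, which accounts for the dichotomy in the statement. For (ii) I would compute the $\delta$-boundary deficit of the block along its three outer edge classes after assigning shape (IX)$=\Delta(\pi/12,7\pi/12,\pi/3)$ to the outermost layer $\Delta_{\Sigma-2k+1}$, and check it equals the $\epsilon$-boundary deficit --- one of $(\pi,\pi/3,5\pi/3)$ or $(\pi/3,\pi,5\pi/3)$ from the $\delta$- and $\epsilon$-boundary deficit table recorded above --- of a (B1) or (B3) block ending on the letter opposite the first syllable of the (B2) block, in the order induced by the connecting twist, exactly as in the computation of Lemma~\ref{lem:B1andB3}; the point of shape (IX) is that it converts the incoming deficit into multiples of $\pi/3$ that are absorbed by the interior (III) layers.

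For (iii), between the (IX) layer and the terminal (V) layers the block is a repeated $L^2R^2$ pattern, whose interior edge equations are precisely~\eqref{eq:b3interior1}--\eqref{eq:b3interior2}; these are satisfied by alternating the shape-(III) assignments $(\pi/4,\pi/4,\pi/2)$ and $(\pi/2,\pi/4,\pi/4)$, with the first two entries transposed after each $L^2R^2$. An induction on $k$ --- inserting one further $L^2R^2$, adding two layers of shape (III), and checking that the boundary deficit triples on both sides are preserved up to transposing the first two angles, as in Lemmas~\ref{lem:angleB3} and~\ref{lem:angleB2start} --- then produces exactly $2k-5$ interior layers of shape (III) for $k\geq 3$. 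A final check confirms that the two interfaces, (IX) against the preceding (B1)/(B3) block and $\Delta_\Sigma$ against its closing faces, are simultaneously consistent.

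The step I expect to be the main obstacle is the edge-class bookkeeping rather than any individual equation: each $L$ or $R$ twist transposes two of the three labels (vertical, horizontal, diagonal), so comparing the computed $\delta$-deficit with the prescribed $\epsilon$-deficit of the preceding block, and comparing the angles of $\Delta_\Sigma$ against the closing identification, both require tracking these transpositions through the whole block. Once the indexing is pinned down, every verification is a short linear computation; the genuinely new content beyond Lemmas~\ref{lem:angleB3} and~\ref{lem:angleB2start} is the behaviour at the closing layer $\Delta_\Sigma$, which is what forces the shape-(V) layers and the $k=2$ versus $k\geq 3$ split.
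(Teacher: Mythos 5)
Your proposal is correct and follows essentially the same route as the paper's proof: fix the orientation case, write down the angle equations for the base cases $k=2$ and $k=3$ using the known $\delta$-boundary deficit of a block starting on $R$ (or $L$), assign shape (IX) to the outermost layer, shape (V) to the three or four innermost layers including $\Delta_\Sigma$, fill the interior with shape (III) via the equations \eqref{eq:b3interior1}--\eqref{eq:b3interior2}, and handle the remaining letter patterns by transposing the first two angles in each layer. The only cosmetic difference is that you phrase the shape assignments as being forced by the equations while the paper posits them and verifies; the content is the same.
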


\begin{proof}
	Assume the last letter of $\Omega$ is $R$. Suppose that $k=2$ and the (B2) block is given by $R^2L^2$. The block contains the layers $\Delta_{\Sigma-3}$ to $\Delta_\Sigma$, inclusive. The angle equations of this block are now,
	\begin{align*}
		\delta_0 + 2\theta_{3(\Sigma-3)-2} + 2\theta_{3(\Sigma-2)-2} + \theta_{3(\Sigma-1)}            & = 2\pi, \\
		\delta_1 + 2\theta_{3(\Sigma-3)-1} + \theta_{3(\Sigma-2)}                                      & = 2\pi, \\
		\delta_2 + \theta_{3(\Sigma-3)}                                                                & = 2\pi, \\
		\theta_{3(\Sigma-3)} + 2\theta_{3(\Sigma-2)-1} + 2\theta_{3(\Sigma-1)-1} + 2\theta_{3\Sigma-1} & = 2\pi, \\
		\theta_{3(\Sigma-2)} + 2\theta_{3(\Sigma-1)-2} + \theta_{3\Sigma}                              & = 2\pi, \\
		2\theta_{3(\Sigma-1)} + 4\theta_{3\Sigma-2} + 2\theta_{3\Sigma}                                & = 2\pi.
	\end{align*}
	The block starts on $R$ hence the $\delta$-boundary deficit is given by $\delta_0 = \pi$, $\delta_1 = \pi/3$ and $\delta_2 = 5\pi/3$. We assign shape (IX) to $\Delta_{\Sigma-3}$ as
	\begin{equation*}
		\angT{\Sigma-3} = \angles{\frac{\pi}{12}}{\frac{7\pi}{12}}{\frac{\pi}{3}}.    
	\end{equation*}
	We then assign shape (V) to the remaining shapes as,
	\begin{equation*}
		\angT{\Sigma-2} = \angles{\frac{\pi}{3}}{\frac{\pi}{6}}{\frac{\pi}{2}},\qquad \angT{\Sigma-1} = \angles{\frac{\pi}{2}}{\frac{\pi}{3}}{\frac{\pi}{6}}\quad\text{and}\quad\angT{\Sigma} = \angles{\frac{\pi}{6}}{\frac{\pi}{3}}{\frac{\pi}{2}}.    
	\end{equation*}
	Increasing to $k=3$ the (B2) block is now given by $L^2R^2L^2$. We can observe that the last three angle equations above remain the same. The three boundary angle equations are the same as above, however the angles assigned to vertical and horizontal edges are swapped. From this we assign shape (IX) to $\Delta_{\Sigma-5}$ as was done for $\Delta_{\Sigma-3}$ above, but swapping the first two angles and shape (V) to layers $\Delta_{\Sigma-2}$, $\Delta_{\Sigma-1}$ and $\Delta_{\Sigma}$ exactly as above. The angle equations involving layers $\Delta_{\Sigma-4}$ and $\Delta_{\Sigma-3}$ are,
	\begin{align*}
		\delta_0 + 2\theta_{3(\Sigma-5)-2} + \theta_{3(\Sigma-4)}                                                                 & = 2\pi, \\
		\delta_1 + 2\theta_{3(\Sigma-5)-1} + 2\theta_{3(\Sigma-4)-1} + \theta_{3(\Sigma-3)}                                       & = 2\pi, \\
		\theta_{3(\Sigma-5)} + 2\theta_{3(\Sigma-4)-2} + 2\theta_{3(\Sigma-3)-2} + 2\theta_{3(\Sigma-2)-2} + \theta_{3(\Sigma-1)} & = 2\pi, \\
		\theta_{3(\Sigma-4)} + 2\theta_{3(\Sigma-3)-1} + \theta_{3(\Sigma-2)}                                                     & = 2\pi, \\
		\theta_{3(\Sigma-3)} + 2\theta_{3(\Sigma-2)-1} + 2\theta_{3(\Sigma-1)-1} + 2\theta_{3\Sigma-1}                            & = 2\pi.
	\end{align*}
	Substituting in $\delta_0 = \pi/3$, $\delta_1=\pi$ and the angles from the shape assignments to $\Delta_{\Sigma-5},\Delta_{\Sigma-2},\Delta_{\Sigma-1}$ and $\Delta_{\Sigma}$ allows us to assign shape (III) to $\Delta_{\Sigma-4}$ and (V) to $\Delta_{\Sigma-3}$ as,
	\[
		\angT{\Sigma-4} = \angles{\frac{\pi}{4}}{\frac{\pi}{4}}{\frac{\pi}{2}}\qquad\text{and}\qquad\angT{\Sigma-3}=\angles{\frac{\pi}{6}}{\frac{\pi}{2}}{\frac{\pi}{3}}.
	\]
	Extending the block to $k>3$ adds angle equations as in \Cref{eq:b3interior1,eq:b3interior2} and can be solved by adding additional layers of shape (III).
	
	An analogous argument shows that this assignment of shapes is valid if the (B2) block ends on $R^2$ (giving the last letter of $\Omega$ to be $L$) or starts on $L^2$ with the first two angles of each shape vector swapped.
\end{proof}

We now address the case of $a_{n-1}=1$ and $a_{n}=2$, which is neglected in the above lemma. Extending this block to contain the final letter of $\Omega$ we obtain a (B3) block in which the final (B2) block contained within it has length one. In this situation it is more convenient to treat the subcomplex corresponding to this block as an `unfinished' (B3) block. 
\begin{lemma}\label{lem:unfinishedB3}
	Let $\tri$ be the Sakuma-Weeks triangulation of a $2$-bridge link complement associated to the word $\Omega=RL^{a_1}\cdots(L^{a_n}R\mid R^{a_n}L)$ with $a_{n-1}=1$, $a_{n}=2$, $n\geq 2$, and $\Sigma = 1+\sum a_i$. The subcomplex $\mathcal{S}\subset\tri$ corresponding to an unfinished (B3) block at the end of $\Omega$ containing the layers $\Delta_{\Sigma-2}$, $\Delta_{\Sigma-1}$ and $\Delta_{\Sigma}$ can be constructed as in \Cref{lem:angleB3} with the two layers containing ideal tetrahedra of shapes (IV) and (II) replaced with a single layer containing ideal tetrahedra of shape (VII) or (III) if the block contains one or more (B2) blocks, respectively.
\end{lemma}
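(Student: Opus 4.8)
The plan is to follow the templates of \Cref{lem:angleB3} and, especially, of Case~2 in the proof of \Cref{lem:B1B3Closure}: keep the (B3)--type shape assignment of \Cref{lem:angleB3} on every layer before $\Delta_\Sigma$, and exhibit a single ``cap'' layer $\Delta_\Sigma$ -- of shape (VII) when the block reduces to a (B1) block followed by the trailing length--one (B2), and of shape (III) when it contains a genuine (B2) subblock -- that meets both the self--identification forced on $\Delta_\Sigma$ by the final letter of $\Omega$ and the boundary deficit shared with the preceding block.

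First I would set up notation. Without loss of generality take the last letter of $\Omega$ to be $R$, so $\Omega$ ends in $R^{a_{n-1}}L^{a_n}R = R\,L^2\,R$; the remaining case follows by interchanging $\theta_{3j-2}\leftrightarrow\theta_{3j-1}$ throughout, as in \Cref{lem:angleB3}. Appending this trailing $R$ would complete the last syllables to a genuine (B3) block whose final (B2) subblock has length one, but the closing--off identification of \Cref{fig:closingoff} absorbs that letter; hence $\mathcal S$ is one layer shorter, which is what ``unfinished'' means. In particular every layer of $\mathcal S$ strictly before $\Delta_\Sigma$ can be given the shape it carries in the (B3) assignment of \Cref{lem:angleB3} -- so the layer $\Delta_{\Sigma-1}$ just before the cap carries whichever of (VI), (III) or (VIII) precedes the pair (IV),(II) in the relevant (B3) pattern -- and only the would--be layers of shapes (IV) and (II) are merged into $\Delta_\Sigma$.

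Next I would record, from \Cref{fig:tetrahedralayer,fig:closingoff}, the edge equations touching $\Delta_{\Sigma-2},\Delta_{\Sigma-1},\Delta_\Sigma$: the interior edge equations of the (B3) block, already satisfied by the shapes of \Cref{lem:angleB3}; the equations on the boundary glued to the preceding (B1) or (B3) block, which must reproduce the $\delta$--boundary deficit $(\pi,\pi/3,5\pi/3)$ of a block beginning on $R$ (as ours does, its first syllable being $R^{a_{n-1}}$); and the two closing--off equations on $\Delta_\Sigma$, of the form
\begin{equation*}
    4\theta_{3\Sigma-2} + 2\theta_{3\Sigma} + 2\theta_{3(\Sigma-1)} = 2\pi,
    \qquad
    x + 2\theta_{3\Sigma-1} = 2\pi,
\end{equation*}
exactly as in \Cref{eq:lastlayereqns}, where $x$ is the angle deficit accumulated on the vertical edge class of $\Delta_\Sigma$ by the layers before it. I would then take $\angT{\Sigma}$ to be an ordering of $\{\pi/8,3\pi/8,\pi/2\}$ (shape (VII)) or of $\{\pi/4,\pi/4,\pi/2\}$ (shape (III)) according to the case, with the ordering chosen so that the first displayed equation is satisfied given the shape of $\Delta_{\Sigma-1}$; it then remains to evaluate $x$ from the (B3) shapes on the earlier layers -- tracing the vertical edge class of $\Delta_\Sigma$ backwards through $\Delta_{\Sigma-1},\Delta_{\Sigma-2},\dots$, exactly as in Case~2 of the proof of \Cref{lem:B1B3Closure} -- and to check $x + 2\theta_{3\Sigma-1} = 2\pi$. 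Compatibility on the other boundary is then automatic, since every (B1) or (B3) block ends with an $\epsilon$--boundary deficit equal to the $\delta$--deficit of a block starting on the opposite letter (\Cref{lem:angleB1,lem:angleB3,lem:B1andB3}) and our block starts on $R$.

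The main obstacle is the second closing--off equation: routing the vertical edge class of $\Delta_\Sigma$ correctly through the preceding layers, with the right permutation of angles in each, so that $x$ comes out to the value that matches shape (VII) in the one case and shape (III) in the other. This is the only genuinely new computation -- it is the analogue of Case~2 of \Cref{lem:B1B3Closure} -- and everything else is substitution into equations already verified in \Cref{lem:angleB3}. A secondary point to keep straight is the $a_{n-1}=1,\,a_n=2$ bookkeeping, which is exactly what ensures the decomposition leaves an unfinished (B3) block of this shape at the end of $\Omega$ rather than a full (B2) block of length $\geq 2$ as handled by \Cref{lem:angleB2end}.
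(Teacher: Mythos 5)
Your proposal follows the paper's proof essentially verbatim (up to the mirror choice of the final letter of $\Omega$): the same two cases, the same retention of the \Cref{lem:angleB3} shapes on the pre-cap layers, and the same two closing-off equations solved by a single cap layer $\Delta_\Sigma$ of shape (VII) or (III). The one small bookkeeping point is that the edge class gluing the diagonal edges of $\Delta_{\Sigma-1}$ to the horizontal/vertical edges of $\Delta_\Sigma$ yields a third equation containing $\theta_{3\Sigma}$ in place of an angle of the deleted (IV) layer -- so it is not ``already satisfied by \Cref{lem:angleB3}'' as you file it -- but it is met by the proposed angles, and the paper verifies it explicitly in both cases.
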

\begin{proof}
	Suppose that $\Omega'=LR^2(L)$ so that $\mathcal{S}$ contains only three layers, with $(L)$ denoting the final letter of $\Omega$. The corresponding angle equations are,
	\begin{align*}
		\delta_0 + 2\theta_{3(\Sigma-2)-2} + 2\theta_{3(\Sigma-1)-2} + 2\theta_{3\Sigma-2} & = 2\pi, \\
		\delta_1 + 2\theta_{3(\Sigma-2)-1} + \theta_{3(\Sigma-1)}                          & = 2\pi, \\
		\delta_2 + \theta_{3(\Sigma-2)}                                                    & = 2\pi, \\
		\theta_{3(\Sigma-2)} + 2\theta_{3(\Sigma-1)-1} + \theta_{3\Sigma}                  & = 2\pi, \\
		2\theta_{3(\Sigma-1)} + 4\theta_{3\Sigma-1} + 2\theta_{3\Sigma}                    & = 2\pi.
	\end{align*}
	The block starts on $L$ hence the $\delta$-boundary deficit is given by $\delta_0 = \pi/3$, $\delta_1 = \pi$ and $\delta_2 = 5\pi/3$. We obtain shapes (I), (VI) and (VII) by assigning the angles,
	\begin{equation*}
		\angT{\Sigma-2} = \angles{\frac{7\pi}{24}}{\frac{3\pi}{8}}{\frac{\pi}{3}},\quad \angT{\Sigma-1} = \angles{\frac{\pi}{6}}{\frac{7\pi}{12}}{\frac{\pi}{4}}\quad\text{and}\quad\angT{\Sigma} = \angles{\frac{3\pi}{8}}{\frac{\pi}{8}}{\frac{\pi}{2}}.
	\end{equation*}
	If instead the (B3) block is longer, then we only need to consider an additional two layers preceding $\Delta_{\Sigma-2}$. Suppose that the tail of $\Omega'$ has the form $R^2LR^2(L)$, where $(L)$ denotes the final letter of $\Omega$. The corresponding angle equations are,
	\begin{align*}
		\theta_{3(\Sigma-4)} + 2\theta_{3(\Sigma-3)-1} + 2\theta_{3(\Sigma-2)-1} + \theta_{3(\Sigma-1)} & = 2\pi, \\
		\theta_{3(\Sigma-3)} + 2\theta_{3(\Sigma-2)-2} + 2\theta_{3(\Sigma-1)-2} + 2\theta_{3\Sigma-2}  & = 2\pi, \\
		\theta_{3(\Sigma-2)} + 2\theta_{3(\Sigma-1)-1} + \theta_{3\Sigma}                               & = 2\pi, \\
		2\theta_{3(\Sigma-1)} + 4\theta_{3\Sigma-1} + 2\theta_{3\Sigma}                                 & = 2\pi.
	\end{align*}
	Following \Cref{lem:angleB3} we assign shape (VIII) as 
	\begin{equation*}
		\angT{\Sigma-1} = \angles{\frac{\pi}{8}}{\frac{5\pi}{8}}{\frac{\pi}{4}},
	\end{equation*}
	and shape (III) as
	\begin{equation*}
		\angT{\Sigma-2} = \angles{\frac{\pi}{4}}{\frac{\pi}{4}}{\frac{\pi}{2}}\quad\text{ and }\quad\angT{\Sigma-3} = \angles{\frac{\pi}{4}}{\frac{\pi}{2}}{\frac{\pi}{4}}.
	\end{equation*}
	This assignment yields $\Delta_\Sigma$ having shape (III) as $\angT{\Sigma} = (\pi/2,\pi/4,\pi/4).$ Finally, we require $\theta_{3(\Sigma-4)} = \pi/4$. We must have the ideal tetrahedra in $\Delta_{\Sigma-4}$ having either shape (VI) or shape (III). In both cases we can set $\theta_{3(\Sigma-4)} = \pi/4$. All solutions to any remaining angle equations can be set as in \Cref{lem:angleB3}.
\end{proof}

\begin{lemma}\label{lem:onlyB2}
	Let $\tri$ be the Sakuma-Weeks triangulation of a $2$-bridge link complement associated to the word $\Omega=RL^{a_1}\cdots(L^{a_n}R\mid R^{a_n}L)$ with $a_i=2$ for all $1\leq i\leq n$. That is, the subword $\Omega' = L^{a_1}R^{a_2}\cdots(L^{a_n}\mid R^{a_n})$ consists of a single (B2) block. Then $\tri$ admits an angle structure consisting only of shapes (III) and (VII) arranged as
	\begin{center}
		\begin{tikzpicture}
			\node (VII) {(VII)};
			\node[right=of VII] (III) {\big[(III)\big]$\,^{2n-1}$};
			\node[right=of III] (VIIi) {(VII)};
			
			\draw[->] (VII) -- (III);
			\draw[->] (III) -- (VIIi);
		\end{tikzpicture}
	\end{center}
\end{lemma}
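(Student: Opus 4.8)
\section*{Proof proposal}

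The plan is to write the angle structure down explicitly and verify conditions (1)--(3) of \Cref{defn:angle-structure}, leaning on the computations already made for (B2)- and (B3)-blocks. By \Cref{lem:anglesymmetry} it suffices to assign a single triple $\angT{i}$ to each layer $\Delta_i$, $1\leq i\leq\Sigma$, where $\Sigma = 1+\sum a_i = 2n+1$ since all $a_i=2$. The proposed assignment sets $\angT{1}$ and $\angT{\Sigma}$ to shape (VII), i.e.\ $\angles{\frac{\pi}{8}}{\frac{3\pi}{8}}{\frac{\pi}{2}}$ up to swapping the first two entries, and $\angT{i}$ to shape (III), i.e.\ $\angles{\frac{\pi}{4}}{\frac{\pi}{4}}{\frac{\pi}{2}}$ with the $\pi/2$-entry placed in the slot forced by the $L^2/R^2$ pattern of $\Omega'$, for $2\leq i\leq\Sigma-1$. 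Conditions (1) and (2) are immediate, since $\tfrac{\pi}{8}+\tfrac{3\pi}{8}+\tfrac{\pi}{2}=\tfrac{\pi}{4}+\tfrac{\pi}{4}+\tfrac{\pi}{2}=\pi$ and all entries lie in $(0,\pi)$; the work is in condition (3), the $|\tri^{(1)}|=|\tri|=4n+2$ edge equations (\Cref{lem:numberofedges}).

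These edge classes fall into three families. The interior edge classes of the block are governed by exactly \Cref{eq:b3interior1,eq:b3interior2} together with the permuted variants used in the proof of \Cref{lem:angleB3}, and assigning shape (III) with the $\pi/2$-entry rotating as in \Cref{eq:121shape3} solves them, so this part is inherited verbatim. The three edge classes touching $\Delta_1$ arise from the closing-off identifications of \Cref{fig:closingoff}; read off as in \Cref{eq:B2xdeficit} they become $x_0=\theta_{3(1)}$, $x_1=2\theta_{3(1)-1}$, $x_2=4\theta_{3(1)-2}+2\theta_{3(1)}+\theta_{3(2)}$ together with the three equations tying $x_0,x_1,x_2$ to $\angT{2}$ and $\angT{3}$, and setting $\angT{1}=\angles{\frac{\pi}{8}}{\frac{3\pi}{8}}{\frac{\pi}{2}}$, $\angT{2}=\angles{\frac{\pi}{2}}{\frac{\pi}{4}}{\frac{\pi}{4}}$, $\angT{3}=\angles{\frac{\pi}{4}}{\frac{\pi}{4}}{\frac{\pi}{2}}$ satisfies them. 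This is precisely the ``$m>1$'' compatibility check at the end of the proof of \Cref{lem:angleB2start}, which I would cite.

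It remains to treat the three edge classes touching $\Delta_\Sigma$, which come from the closing-off on the innermost four-punctured sphere. Because $\Omega$ starts with $R$, this inner closing-off coincides with the outer one at $\Delta_1$ when $\Omega$ ends with $R$ (i.e.\ $n$ odd), and is its mirror image (exchange of the first two angle slots) when $\Omega$ ends with $L$ (i.e.\ $n$ even); in either case the three equations at $\Delta_\Sigma$ are the (possibly slot-swapped) counterparts of those at $\Delta_1$, so that $\angT{\Sigma}$ of shape (VII) with $\angT{\Sigma-1},\angT{\Sigma-2}$ of shape (III) satisfies them exactly as before -- equivalently, one writes these equations out in the style of \Cref{eq:lastlayereqns} and checks them directly. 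The one genuinely new point, and the step I expect to be the main obstacle, is confirming that the $\pi/2$-rotation of the $2n-1$ interior (III)-layers is \emph{globally} consistent: the rotation pattern prescribed at the $\Delta_1$-end must arrive at the $\Delta_\Sigma$-end in exactly the state that a (VII)-layer can close off. This I would establish by induction on $n$ (equivalently, on the length of the (B2)-block), simultaneously tracking the two cases arising from the parity of $n$, which controls whether the last syllable of $\Omega'$ is $R^2$ or $L^2$ and hence which slot-swap appears at $\Delta_\Sigma$.
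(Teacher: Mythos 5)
Your proposal is correct and follows essentially the same route as the paper's proof: assign shape (VII) to the two end layers (with the slot-swap at $\Delta_\Sigma$ governed by the last letter of $\Omega$), fill the interior with shape (III) whose $\pi/2$-entry cycles through the four permutations dictated by the $L^2/R^2$ pattern, and verify the edge equations by an explicit base case plus the observation that each added syllable only introduces equations of the interior type. The paper simply writes out the $n=1$ case and states the cyclic order of (III)-vectors, whereas you delegate the interior and $\Delta_1$-end checks to Lemmas \ref{lem:angleB3} and \ref{lem:angleB2start} and flag the global consistency of the rotation as the inductive step -- a fair reorganisation of the same argument.
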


\begin{proof}
	Suppose $n=1$ so that $\Omega=RL^2R$. The angle equations describing $\tri$ are,
	\begin{align*}
		4\theta_{3(1)-2} + 2\theta_{3(1)} + 2\theta_{3(2)}     & = 2\pi, \\
		2\theta_{3(1)-1} + 2\theta_{3(2)-1} + 2\theta_{3(3)-1} & = 2\pi, \\
		\theta_{3(1)} + 2\theta_{3(2)-2} + \theta_{3(3)}       & = 2\pi, \\
		2\theta_{3(2)} + 4\theta_{3(3)-2} + 2\theta_{3(3)}     & = 2\pi.
	\end{align*}
	We assign shape (VII) to $\Delta_1$ and $\Delta_3$ as,
	\begin{equation*}
		\angT{1} = \angT{3} = \angles{\frac{\pi}{8}}{\frac{3\pi}{8}}{\frac{\pi}{2}}.
	\end{equation*}
	The remaining equations are solved by assigning shape (III) to $\Delta_2$ as \[\angT{2} = \angles{\frac{\pi}{2}}{\frac{\pi}{4}}{\frac{\pi}{4}}.\]
	
	Adding additional syllables with $a_i=2$ adds angle equations that are solved by assigning shape (III) to the added layers in the following (cyclic) order starting from $\Delta_2$,
	\begin{center}
		\begin{tikzpicture}
			\node (A) {$\angles{\dfrac{\pi}{2}}{\dfrac{\pi}{4}}{\dfrac{\pi}{4}}$};
			\node[right=of A] (B) {$\angles{\dfrac{\pi}{4}}{\dfrac{\pi}{4}}{\dfrac{\pi}{2}}$};
			\node[right=of B] (C) {$\angles{\dfrac{\pi}{4}}{\dfrac{\pi}{2}}{\dfrac{\pi}{4}}$};
			\node[right=of C] (D) {$\angles{\dfrac{\pi}{4}}{\dfrac{\pi}{4}}{\dfrac{\pi}{2}}$};
			
			\draw[-Straight Barb] (A) -- (B);
			\draw[-Straight Barb] (B) -- (C);
			\draw[-Straight Barb] (C) -- (D);
			\draw[-Straight Barb] (D) to[out=195,in=-15] (A);
		\end{tikzpicture}
	\end{center}
	The assignment of shape (VII) is as above if $\Omega$ ends with $R$, otherwise the first two angles are exchanged. 
\end{proof}

\subsection{Complexity bounds}
\label{ssec:bounds}
We now prove lower bounds on the complexity of $2$-bridge link complements through the angle structures established in the previous section. The first step of this proof is determining explicit formulae for the volume of a given block type in the decomposition of $\Omega$. The approximate volume of each of the ten shapes is given below as a multiple of the volume of a regular ideal tetrahedron $v_3$.
\begin{equation*}
	\arraycolsep=20pt\def\arraystretch{1.7}
	\begin{array}{ll}
		v_3 = \Vol(\Delta(\pi/3,\pi/3,\pi/3)) \approx 1.0149               & v_V = \Vol(\Delta(\pi/6,\; \pi/2,\; \pi/3)) \approx 0.8333v_3        \\
		v_I = \Vol(\Delta(\pi/3,\;3\pi/8,\;7\pi/24)) \approx 0.9902v_3     & v_{VI} = \Vol(\Delta(\pi/6,\; \pi/4,\; 7\pi/12)) \approx 0.7754v_3   \\
		v_{II} = \Vol(\Delta(\pi/3,\;\pi/4,\;5\pi/12)) \approx 0.9604v_3   & v_{VII} = \Vol(\Delta(\pi/8,3\pi/8,\pi/2)) \approx 0.7417v_3         \\
		v_{III} = \Vol(\Delta(\pi/4,\; \pi/4,\; \pi/2)) \approx 0.9024v_3  & v_{VIII} = \Vol(\Delta(\pi/8,\; \pi/4,\; 5\pi/8)) \approx 0.6768v_3  \\
		v_{IV} = \Vol(\Delta(5\pi/24,\;7\pi/24,\;\pi/2)) \approx 0.8855v_3 & v_{IX} = \Vol(\Delta(\pi/12,\; 7\pi/12,\; \pi/3)) \approx 0.5833v_3. \\
	\end{array}
\end{equation*}

Recall that the length of a block is the number of syllables it contains - that is, the number of maximal subwords $R^{a_i}$ or $L^{a_i}$.

Using the lemmas in the Section 5.2, we can determine the number of layers containing ideal tetrahedra of each shape type in each given block. This also provides the volume of each block type in terms of its length and is collected in \Cref{tbl:shapelayer}.
\begin{table}[htbp]
	\centering
	{\small
		\begin{tabular}{|c||c|c|c|c|c|c|c|c|c|c|c|}
			\hline
			                         & \# layers   & (0)       & (I) & (II) & (III)       & (IV) & (V)                      & (VI) & (VII)      & (VIII) & (IX) \\
			\hline\hline
			(B1)                     & $k$         & $k^\star$ & 0   & 0    & 0           & 0    & $\Delta_1,\Delta_\Sigma$ & 0    & 0          & 0      & 0    \\
			\hline
			(B2), at start           & $2k$        & 0         & 1   & 0    & $2(k-1)$    & 0    & 0                        & 1    & $\Delta_1$ & 0      & 0    \\
			\hline
			(B2), at end ($k=2$)     & 4           & 0         & 0   & 0    & 0           & 0    & 3                        & 0    & 0          & 0      & 1    \\
			\hline
			(B2), at end ($k\geq 3$) & $2k$        & 0         & 0   & 0    & $2k-5$      & 0    & 4                        & 0    & 0          & 0      & 1    \\
			\hline
			(B3)                     & $2\ell+m+1$ & 0         & 1   & $1$  & $2(\ell-1)$ & 1    & $\Delta_1$               & 1    & 0          & $m-1$  & 0    \\
			\hline
			(B3), at end             & $2\ell+m+1$ & 0         & 1   & 0    & $2\ell-1$   & 0    & $\Delta_1$               & 1    & 0          & $m$    & 0    \\
			\hline
			Unfinished (B3) ($m=1$)  & $3$         & 0         & 1   & 0    & 0           & 0    & $\Delta_1$               & 1    & 1          & 0      & 0    \\
			\hline
			Unfinished (B3) ($m>1$)  & $2\ell+m$   & 0         & 1   & 0    & $2\ell-1$   & 0    & $\Delta_1$               & 1    & 0          & $m-1$  & 0    \\
			\hline
			All (B2)                 & $2k$        & 0         & 0   & 0    & $2k-1$      & 0    & 0                        & 0    & 2          & 0      & 0    \\
			\hline 
		\end{tabular}
	}
	\caption{Number of layers containing ideal tetrahedra of each shape in each block type. The length of a block is denoted $k$ (length does not include $\Delta_1$) and the number of (B2) blocks in a (B3) block is denoted $m$ and the total length of the (B2) blocks is denoted $\ell$. Values marked by $\star$ contain one less layer of this shape if the block occurs at the end of the word $\Omega$, with the final layer replaced with the shape marked by $\Delta_\Sigma$. The entry marked by $\Delta_1$ indicates the shape of the initial layer of $\tri$ if the block occurs at the start of $\Omega$. Recall that the unfinished (B3) block occurs only when $a_{n-1}=1$ and $a_n=2$ and is always at the end of $\Omega$.}
	\label{tbl:shapelayer}
\end{table}
To prove our main result we first show that the above prescription of angle structures, combined with Thurston's lower bound on complexity \Cref{eq:full-bound}, determine that the Sakuma-Weeks triangulation is minimal if $a_i=1$ for all $1\leq i\leq n$.

\begin{lemma}[\cite{ishikawa_construction_2016}, Corollary 1.1]
	Let $K=K(\Omega)$ be the $2$-bridge link associated to the word $\Omega = RL^{a_1}\cdots(L^{a_n}R\mid R^{a_n}L)$ with $a_i=1$ for all $1\leq i\leq n$, $n\geq 1$. Let $M=S^3\backslash K$ and $\tri=\tri(M)$ be the Sakuma-Weeks triangulation of $M$, Moreover, let $c(M)$ denote the complexity of $M$. Then $\tri$ is minimal; that is 
	\begin{equation*}
		c(M) = |\tri| = 2(n+1).
	\end{equation*}
\end{lemma}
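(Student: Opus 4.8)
The plan is to get the upper bound $c(M)\le|\tri|$ for free and to obtain the matching lower bound $c(M)\ge|\tri|$ by feeding an explicit angle structure into Thurston's estimate \Cref{eq:full-bound}. The Sakuma--Weeks triangulation of $M$ has $\Sigma = 1 + \sum_{i=1}^n a_i = n+1$ layers and hence $|\tri| = 2(n+1)$ tetrahedra, so $c(M)\le 2(n+1)$ is automatic; moreover $\Omega$ has at least three syllables, so $K$ is hyperbolic by Menasco's theorem \cite{menasco_closed_1984} and $\Vol(M)$ is defined. Thus it suffices to produce an angle structure $\Theta\in\ang(\tri)$ with $\mathcal{V}(\Theta) > (2n+1)\,v_3$: by \Cref{cor:volumeestimate} and \Cref{eq:full-bound} this gives $c(M)\ge\Vol(M)/v_3\ge\mathcal{V}(\Theta)/v_3 > 2n+1$, and integrality of $c(M)$ then forces $c(M)\ge 2n+2 = |\tri|$.

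To build $\Theta$ I would invoke the machinery of the previous subsection. Since every $a_i$ equals $1$, the subword $\Omega'$ is an alternating word in $L$ and $R$, hence a single block of type (B1) (cf.\ \Cref{lem:decomposition}). \Cref{lem:angleB1} then furnishes an angle structure on the corresponding subcomplex using only regular ideal tetrahedra (shape $(0)$), and \Cref{lem:B1B3Closure} --- whose hypothesis holds because $\Omega'$ decomposes into (B1) and (B3) blocks, here just the single (B1) block --- extends this to a global $\Theta\in\ang(\tri)$ in which $\Delta_1$ has shape (V) and, the last block being of type (B1), $\Delta_\Sigma = \Delta_{n+1}$ also has shape (V), while the remaining layers $\Delta_2,\dots,\Delta_n$ keep shape $(0)$.

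Next I would evaluate the volume functional on $\Theta$. By \Cref{lem:anglesymmetry} (or directly by the symmetric construction) both ideal tetrahedra in each layer carry the same shape, so $\mathcal{V}(\Theta) = 2\bigl(2v_V + (n-1)v_3\bigr) = 4v_V + 2(n-1)v_3$. Using $\Lambda(\pi/2)=0$ and the duplication identity $\Lambda(\pi/6) = \tfrac32\Lambda(\pi/3)$, one gets $v_V = \Lambda(\pi/6)+\Lambda(\pi/2)+\Lambda(\pi/3) = \tfrac52\Lambda(\pi/3) = \tfrac56 v_3$ (this is the value $\approx 0.8333\,v_3$ recorded at the start of \Cref{ssec:bounds}), in particular $4v_V > 3v_3$, so $\mathcal{V}(\Theta) = \tfrac{10}{3}v_3 + 2(n-1)v_3 = \bigl(2n+\tfrac43\bigr)v_3 > (2n+1)\,v_3$. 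Plugging this into the chain of inequalities from the first paragraph yields $c(M) = |\tri| = 2(n+1)$.

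Conceptually there is no obstacle once \Cref{lem:angleB1,lem:B1B3Closure} are available; the two points needing care are the shape bookkeeping --- exactly the two closing-off layers $\Delta_1$ and $\Delta_\Sigma$ are of shape (V) and the remaining $n-1$ layers are regular ideal, accounting for all $2(n+1)$ tetrahedra --- and ensuring the slack in the volume estimate is genuine, i.e.\ that $v_V > \tfrac34 v_3$ so that $\mathcal{V}(\Theta)/v_3$ strictly exceeds $2n+1$; this holds comfortably since $v_V = \tfrac56 v_3$.
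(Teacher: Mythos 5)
Your proposal is correct and follows essentially the same route as the paper: decompose $\Omega'$ as a single (B1) block, use \Cref{lem:angleB1} and \Cref{lem:B1B3Closure} to get an angle structure with two layers of shape (V) and $n-1$ regular ideal layers, evaluate $\mathcal{V} = 2(2v_V + (n-1)v_3) = (2n+\tfrac43)v_3$, and conclude by integrality of $c(M)$. The only addition is your exact derivation $v_V = \tfrac56 v_3$ via the Lobachevsky duplication identity, where the paper simply cites the numerical value $0.8333\,v_3$.
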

\begin{proof}
	The subword $\Omega'=L^{a_1}\cdots(L^{a_n}R\mid R^{a_n}L)$ decomposes into a single (B1) block consisting of $n$ layers. Using \Cref{tbl:shapelayer}, we note that the ideal tetrahedra in $\Delta_1$ and $\Delta_\Sigma$, where $\Sigma = n+1$, have shape (V) whilst the ideal tetrahdedra in the remaining $n-1$ layers are regular ideal. Denote the corresponding angle structure $\Theta^\star$. Combining \Cref{eq:full-bound} with \Cref{cor:volumeestimate} we obtain
	\begin{equation}\label{eq:fullcomplexitybound}
		\frac{\mathcal{V}(\Theta^\star)}{v_3} \leq c(M) \leq |\tri|.
	\end{equation}
	This gives us a lower bound on the complexity as
	
	\begin{equation}\label{eq:lowerboundminimal}
		\frac{2(2v_{V}+ (n-1)v_3)}{v_3} = 2n+1.3332. 
	\end{equation}
	Combining with the upper bound on complexity we have,
	\begin{equation*}
		2n+1.3332 \leq c(M) \leq 2n+2,
	\end{equation*}
	and conclude that $\tri$ is minimal.
\end{proof}

\begin{theorem}\label{thm:complexityBound0.8}
	Let $L=L(\Omega)$ be the $2$-bridge link associated to the word $\Omega=RL^{a_1}\cdots(L^{a_n}R\mid R^{a_n}L)$ where $a_i\in\{1,2\}$ for all $1\leq i\leq n$ and $n\geq 1$. Let $M=S^3\backslash L$ and $\tri=\tri(M)$ be the Sakuma-Weeks triangulation of $M$ and let $c(M)$ denote the complexity of $M$. Then,
	\begin{equation*}
		0.8|\tri| \leq c(M) \leq |\tri|
	\end{equation*}
\end{theorem}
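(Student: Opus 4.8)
The upper bound $c(M)\le|\tri|$ is immediate from \Cref{eq:full-bound}, since $\tri$ is itself a triangulation of $M$ with $|\tri|$ tetrahedra. For the lower bound the plan is to produce a single explicit strict angle structure $\Theta\in\ang(\tri)$, estimate its volume functional from below, and invoke \Cref{cor:volumeestimate} together with \Cref{eq:full-bound}, which give $c(M)\ge\mathcal{V}(\Theta)/v_3$.

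To build $\Theta$ I would decompose the subword $\Omega'$ into blocks of types (B1), (B2), (B3) using \Cref{lem:decomposition} (together with the unfinished-(B3) and all-(B2) variants), and glue the block angle structures furnished by \Cref{lem:angleB1,lem:angleB3,lem:B1andB3,lem:B1B3Closure,lem:angleB2start,lem:angleB2end,lem:unfinishedB3,lem:onlyB2}; compatibility at the block interfaces and at the two closing layers is precisely what those lemmas establish. By \Cref{lem:anglesymmetry} both ideal tetrahedra in each layer may be taken to have the same shape, so $\mathcal{V}(\Theta)=2\sum_j v_{\mathrm{shape}(j)}$, the sum running over the $\Sigma=|\tri|/2$ layers, and \Cref{tbl:shapelayer} records how many layers of each of the ten shapes occur in each block. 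The theorem then reduces to the purely numerical claim that the average layer volume is at least $0.8\,v_3$; equivalently, $\sum_j v_{\mathrm{shape}(j)}\ge 0.8\,\Sigma\,v_3$.

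I would prove this block by block, assigning the opening layer $\Delta_1$ to the first block. Of the ten shapes, (0), (I), (II), (III), (IV) and (V) each have volume strictly above $0.8\,v_3$, while (VI), (VII), (VIII), (IX) lie below it, so within a block the ``light'' layers have to be paid for by neighbouring ``heavy'' ones. Reading off \Cref{tbl:shapelayer}, one verifies by a short list of elementary inequalities (using the approximate volumes recorded just before that table) that every block type contributes at least $0.8\,v_3$ per layer: a (B1) block uses only regular and (V) layers; in a (B3) block the unique (VI) is offset by the unique (I), each (VIII) by a pair of (III) layers, and the remaining (III)'s together with (II) and (IV) leave a strict surplus (here one uses that the total length of the (B2) sub-blocks is at least their number); a (B2) block at the start has its (VII)$=\Delta_1$ covered by (VI), (I) and the (III)'s; a (B2) block at the end of length $k\ge 3$ has its single (IX) covered by the block's (III)'s and (V)'s; and the same holds for the unfinished-(B3) blocks and for all-(B2) words with $n\ge 2$.

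Two configurations fall outside this local argument. The first is a word whose final block is a (B2) block of length exactly $2$: its four layers carry shapes (IX), (V), (V), (V) and run a small deficit ($\approx 0.12\,v_3$), with no (III) in the block to absorb the (IX). Such a block is, however, always preceded by a (B1) or (B3) block and by the layer $\Delta_1$, each of average layer volume strictly above $0.8\,v_3$, and a direct computation from \Cref{tbl:shapelayer} (the extremal case being $\Omega'=LR^2L^2$) shows this surplus outweighs the deficit, so $\sum_j v_{\mathrm{shape}(j)}\ge 0.8\,\Sigma\,v_3$ still holds. The second is the single word $\Omega=RL^2R$, for which $\mathcal{V}(\Theta)/v_3=2(2v_{VII}+v_{III})/v_3\approx 4.77<4.8=0.8|\tri|$; here one uses that $c(M)$ is an integer with $c(M)\ge\mathcal{V}(\Theta)/v_3$, whence $c(M)\ge 5\ge 0.8|\tri|$. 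Combining the cases yields $c(M)\ge 0.8|\tri|$ in general. I expect the main obstacle to be the bookkeeping — checking in every block type that each light shape is genuinely paired with enough heavy shape, and getting the cross-block surplus estimate right in the trailing-(B2) case — rather than any conceptual difficulty, since the angle-structure input is entirely supplied by the lemmas of \Cref{sec:min}.
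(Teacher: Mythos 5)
Your proposal is correct and follows essentially the same route as the paper: both bound $c(M)$ from below by $\mathcal{V}(\Theta^\star)/v_3$ for the angle structure assembled from the block lemmas, and both verify via \Cref{tbl:shapelayer} that the average layer volume is at least $0.8v_3$ block by block, with the deficit of a trailing length-two (B2) block absorbed by the surplus of the preceding blocks. The only divergence is the case $\Omega=RL^2R$, where the paper substitutes a different angle structure of ratio $\approx 0.872$ (which additionally certifies minimality of that triangulation, a fact used in the paper's closing remarks), whereas you round $c(M)\geq 4.77$ up to $5\geq 0.8|\tri|$ using integrality of $c(M)$ --- a valid shortcut that suffices for the stated bound.
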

Before we provide a proof we remark that the lower bound we find is conservative. For example, if $\Omega$ is such that $a_i=2$ for all $1\leq i\leq n$, then the angle structure we find gives a lower bound of approximately $0.85|\tri|\leq c(M)$ for $n=3$ and this converges to approximately $0.9|\tri|\leq c(M)$ as $n\to\infty$. Moreover, we show below that the greatest volume deficits are attained by (B3) blocks with all (B2) blocks having ``small length''. Provided that the total length of such blocks is finite, we typically see an asymptotic lower bound of approximately $0.9|\tri|\leq c(M)$. The choice of conservative bound was made as providing a complete list of constraints decreased the readability of the proof more than what was justifiable.
\begin{proof}
	Using \Cref{eq:fullcomplexitybound} it suffices to show that there is some angle structure $\Theta^\star$ on $\tri$ such that
	\begin{equation*}
		0.8\leq \frac{\mathcal{V}(\Theta^\star)}{|\tri|v_3}.
	\end{equation*}
	Consider a decomposition of $\tri$ into three subcomplexes $\mathcal{S}_1$, $\mathcal{S}_2$, and $\mathcal{S}_3$ where $\mathcal{S}_1$ contains $\Delta_1$ and the ideal tetrahedra associated to the first block in the decomposition of $\Omega$, $\mathcal{S}_3$ contains $\Delta_\Sigma$ and the ideal tetrahedra associated to the last block of the decomposition of $\Omega$, and $\mathcal{S}_2$ contains all remaining ideal tetrahedra. Denote the angle structure on $\mathcal{S}_i$ by $\Theta_i^\star$ so that $\Theta^\star$ is the concatenation of these three angle structures.
	
	We proceed by computing the volume for each $\mathcal{S}_i$ in terms of the length of the block(s) it contains different from (B1). Refer to \Cref{tbl:shapelayer} for all shapes used in what follows.
	
	The first observation we make is that the volume of a (B3) block is smallest when it contains $m$ (B2) blocks of length $1$. This follows since each (B2) block after the first replaces two ideal tetrahedra of shape (III) with two ideal tetrahedra of shape (VIII), which have a smaller volume. To minimise the volume for a fixed length of the block we maximise the number of ideal tetrahedra of shape (VIII) which is done by giving each (B2) block length 1. Consequentially, we only consider (B3) blocks of this form.
	
	Consider first $\mathcal{S}_1$. A single (B2) block consists of $2k_1$ layers plus $\Delta_1$ giving $|\mathcal{S}_1|=2(2k_1+1)$ ideal tetrahedra. Assigning the appropriate angle structures we compute
	\[
		\frac{\mathcal{V}(\Theta_1^{(B2)})}{2(2k_1+1)v_3} = \frac{2\left(v_{VII} + v_I + v_{VI} + 2(k_1-1)v_{III}\right)}{2(2k_1+1)v_3} = \frac{3.6096k_1+1.405}{2(2k_1+1)}.     
	\]
	This evaluates to $0.8357$ for $k_1=1$ and increases with $k_1$. A single (B3) block with $m_1$ (B2) blocks of length 1 consists of $3m_1+1$ layers plus $\Delta_1$ giving $|\mathcal{S}_1|=2(3m_1+2)$ ideal tetrahedra. As above we find
	\[
		\frac{\mathcal{V}(\Theta_1^{(B3)})}{2(3m_1+2)v_3} = \frac{2\left(v_V + v_I + v_{II} + v_{IV} + v_{VI} + 2(m_1-1)v_{III} + (m_1-1)v_{VIII}\right)}{2(3m_1+2)v_3} = \frac{4.9632m_1+3.9264}{2(3m_1+2)}.    
	\]
	This evaluates to $0.8889$ for $m_1=1$ and decreases with $m_1$, approaching $0.8272$ as $m_1\to\infty$.
	
	For $\mathcal{S}_2$, observe that the largest deficit for the subcomplex $\mathcal{S}_2$ must occur with a single (B3) block with $m_2$ (B2) blocks of length $1$. Adding multiple (B3) blocks -- or even (B1) blocks in between (B3) blocks -- introduces tetrahedra of shapes (0), (I), (II), (IV) and (VI), which have a larger volume than an ideal tetrahedra of shape (VIII). Hence, using the calculations above without the ideal tetrahedra of shape (V), 
	\[
		\frac{\mathcal{V}(\Theta_2^\star)}{2(3m_2+1)v_3} = \frac{4.9632m_2 + 2.2598}{2(3m_2+1)}.
	\]
	This evaluates to $0.9028$ for $m_2=1$ and, as above, decreases with $m_2$, approaching $0.8272$ as $m_2\to\infty$.
	
	Consider now $\mathcal{S}_3$. A single (B2) block consists of $2k_3$ layers for $k_3\geq 2$ giving $4k_3$ ideal tetrahedra. We compute
	\[
		\frac{\mathcal{V}(\Theta_3^{(B2)})}{4k_3v_3} = \begin{cases}
			\frac{2(3v_V+v_{IX})}{8v_3} = 0.7708,                                           & k_3=2      \\
			\frac{2(4v_V+v_{IX}+(2k_3-5)v_{III})}{4k_3v_3} = \frac{3.6096 k_3-1.191}{4k_3}, & k_3\geq 3.
		\end{cases}
	\]
	This evaluates to $0.8031$ for $k_3=3$ and increases with $k_3$. However, we need to ensure that the deficit when $k_3=2$ is balanced by the other subcomplexes. We need only check two cases. First, if $\tri = \mathcal{S}_1 \cup \mathcal{S}_3$ with $\mathcal{S}_1$ a (B3) block with $m_1$ (B2) blocks of length 1 then,
	\[
		\frac{\mathcal{V}(\Theta^\star)}{2(3m_1+6)v_3} = \frac{4.9632 m_1 + 0.0928}{2(3m_1+6)}.    
	\]
	This evaluates to $0.8364$ when $m_1=1$ and decreases with $m_1$ as above. If instead $\tri = \mathcal{S}_1\cup\mathcal{S}_2\cup\mathcal{S}_3$ with $\mathcal{S}_1$ a (B2) block of length $k_1$ and $\mathcal{S}_2$ a (B3) block with $m_2$ (B2) blocks of length 1, then we obtain
	\[
		\frac{\mathcal{V}(\Theta^\star)}{2(2k_1+3m_2+6)v_3} = \frac{3.6096 k_1 + 4.9632 m_2 + 9.8312}{2(2k_1+3m_2+6)}.    
	\]
	This evaluates to $0.8365$ for $k_1=m_2=1$ and increases with $k_1$ and decreases with $m_2$, approaching $0.8272$ as $m_2\to\infty$. 
	
	Returning to $\mathcal{S}_3$, we consider now when $\mathcal{S}_3$ is a single unfinished (B3) block with $m_3$ (B2) blocks of length 1. This gives $3m_3$ layers and hence $|\mathcal{S}_3| = 6m_3$. We compute
	\[
		\frac{\mathcal{V}(\Theta^\star)}{6m_3v_3} = \begin{cases}
			\frac{2\left(v_I+v_{VI}+v_{VII}\right)}{6v_3} = 0.8357,                                                & m_3=1      \\
			\frac{2\left(v_I+v_{VI}+(2m_3-1)v_{III}+(m_3-1)v_{VIII}\right)}{6m_3v_3} = \frac{4.9632 m+0.3728}{6m_3}, & m_3\geq 2.
		\end{cases}
	\]
	This evaluates to $0.8582$ for $m_3=2$ and decreases with $m_3$, approaching $0.8272$ as $m_3\to\infty$.
	
	For all of the above computations we have $0.8\leq\mathcal{V}(\Theta_i^\star)/|\mathcal{S}_i|v_3$ for each $1\leq i\leq 3$. Combining these subcomplexes together ensures that the inequality is true for $|\tri|$. Note that we have omitted the case when $\tri$ is a single (B3) block or a single unfinished (B3) block. Whilst this will reduce the ratios computed above, we still compute a value above $0.8$ which decreases to $0.8272$ as the number of (B2) blocks increases.
	
	Finally, we consider when $\tri$ decomposes as a single (B2) block of length $k$. We have $|\tri| = 2(2k+1)$ and compute
	\[
		\frac{\mathcal{V}(\Theta^{(B2)})}{2(2k+1)v_3} = \frac{2((2k-1)v_{III}+2v_{VII})}{2(2k+1)v_3} = \frac{3.6096k+1.162}{2(2k+1)}.
	\]
	This evaluates to $0.8381$ for $k=2$ and increases with $k$ and satisfies our desired result. However, when $k=1$ this evaluates to $0.7952$. In this case we assign a different angle structure to the triangulation. We solve the angle equations, obtained from \Cref{fig:tetrahedralayer,fig:closing-off-a}, by assigning
	\begin{equation*}
		(II)\quad \angT{1} = \angT{3} = \angles{\frac{\pi}{4}}{\frac{5\pi}{12}}{\frac{\pi}{3}}\quad\text{ and }\quad\angT{2} = \angles{\frac{2\pi}{3}}{\frac{\pi}{6}}{\frac{\pi}{6}}.
	\end{equation*}
	The volume of the new shape here is computed as $\Vol(\Delta(2\pi/3,\;\pi/6,\;\pi/6)) = 0.6666v_3$.
	Hence we have,
	\begin{equation*}
		\frac{\mathcal{V}(\Theta^\star)}{|\tri|v_3} = \frac{2(0.9604) + 0.6666}{3} = 0.8720.
	\end{equation*}
	It follows that the Sakuma-Weeks triangulation of a $2$-bridge link associated to the word $\Omega=RL^{a_1}\cdots(L^{a_n}R\mid R^{a_n}L)$ with $a_i\in\{1,2\}$ for all $1\leq i\leq n$ satisfies $0.8|\tri|\leq c(M)\leq |\tri|$.
\end{proof}
The minimal volume for each subcomplex in the preceding proof occurs with a (B3) block containing $m$ blocks of type (B2) of length one. This provides a linear relationship between $m$, the number of $a_i=2$, and the gap between certifying minimality of the Sakuma-Weeks triangulation. 
\begin{corollary}\label{cor:additivecorollary}
	Let $W = \left\{\Omega = RL^{a_1}\cdots(L^{a_n}R\mid R^{a_n}L)\mid a_i\in\{1,2\}\text{ for }1\leq i\leq n\right\}$ and for each $C\in\N$ set
	\begin{equation*}
		W_C = \left\{\Omega\in W\mid a_1+\cdots+a_n = n+C,\;n\in\N,\,n\geq C\right\}.
	\end{equation*}
	For $\Omega\in W_C$ let $K=K(\Omega)$ be the associated $2$-bridge link. Let $M=S^3\backslash K$ and $\tri=\tri(M)$ be the Sakuma-Weeks triangulation of $M$ with $|\tri| = 2(n+1+C).$ Let $c(M)$ denote the complexity of $M$. Then,
	\begin{equation*}
		2n + 1 + (0.9632C + 0.393) \leq c(M) \leq 2n + 1 + (2C + 1).
	\end{equation*}
\end{corollary}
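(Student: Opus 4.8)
The upper bound is immediate from \Cref{eq:full-bound}: writing $C = \#\{i : a_i = 2\} = \sum_i a_i - n$, we get $|\tri| = 2\Sigma = 2\big(1+\sum_i a_i\big) = 2(n+1+C) = 2n+1+(2C+1)$. For the lower bound the plan is to apply the volume estimate $c(M) \ge \mathcal{V}(\Theta^\star)/v_3$ (from \Cref{cor:volumeestimate} and \Cref{eq:full-bound}) to the block-by-block angle structure $\Theta^\star$ assembled in \Cref{lem:angleB1,lem:angleB3,lem:B1andB3,lem:B1B3Closure,lem:angleB2start,lem:angleB2end,lem:unfinishedB3,lem:onlyB2}, and to bound the volume deficit
\begin{equation*}
    D \;\vcentcolon=\; |\tri|\,v_3 - \mathcal{V}(\Theta^\star) \;=\; \sum_{i=1}^{\Sigma} 2\big(v_3 - \Vol(\Delta_i)\big),
\end{equation*}
since then $c(M) \ge |\tri| - D/v_3$.

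The first step is to write $D$ as a sum of per-block contributions, one for each block in the decomposition of $\Omega'$ from \Cref{lem:decomposition}, with the closing layers $\Delta_1$ and $\Delta_\Sigma$ absorbed into the first and last blocks. Reading \Cref{tbl:shapelayer} and the per-block volume computations in the proof of \Cref{thm:complexityBound0.8}, a block that carries $q$ syllables of exponent $2$ has deficit at most $(1.0368\,q + c)\,v_3$, with $c$ depending only on the block's type and on whether it sits at the start, in the interior, or at the end of $\Omega'$. The coefficient $1.0368$ arises because, for fixed $q$, lengthening a type-(B2) sub-block of a (B3) block only inserts further shape-(III) layers; the deficit is therefore largest when every (B2) sub-block has length $1$, and each such sub-block then contributes exactly two shape-(III) layers and one shape-(VIII) layer, i.e.\ $4(v_3 - v_{III}) + 2(v_3 - v_{VIII}) = 1.0368\,v_3$. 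Reading the ten shape volumes listed before \Cref{thm:complexityBound0.8}, one may take $c = 0$ for an interior (B1) block, $c = -0.2598$ for an interior (B3) block, $c = \tfrac13$ for a (B1) block carrying the shape-(V) layer $\Delta_1$ or $\Delta_\Sigma$, $c = 0.2736$ for a (B3) block at the end of $\Omega'$ (its terminal shapes (IV),(II) being replaced by (III),(VIII)), a negative value for a (B2) or unfinished-(B3) block, and $c = \tfrac13 + 0.2736 = 0.607$ for a single (B3) block comprising all of $\Omega'$ (which both carries $\Delta_1$ and has its terminal layers replaced).

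Since $\sum_B q_B = C$, this gives $D \le \big(1.0368\,C + \sum_B c_B\big)v_3$, and it remains to check $\sum_B c_B \le 0.607$ whenever $C \ge 1$. If $\Omega'$ is a single block it is a (B1) block (forcing $C = 0$), a single (B3) block ($c \le 0.607$), or a single (B2)/unfinished-(B3) block ($c < 0$). If $\Omega'$ has at least two blocks, only the first contributes a ``start'' constant and only the last an ``end'' constant, while every other block has $c \le 0$ — in fact $c \le -0.2598$ for each interior (B3) block. Running through the finitely many possibilities for the (first,\,last) pair of block types, and using that whenever both end blocks are of type (B1) there must be at least one interior (B3) block (type-(B2) and unfinished-(B3) blocks occur only at the ends, so the $C \ge 1$ exponent-$2$ syllables lie in interior (B3) blocks), one checks $\sum_B c_B \le \tfrac13 + 0.2736 = 0.607$ in every case. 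Hence $D \le (1.0368\,C + 0.607)\,v_3$ and
\begin{equation*}
    c(M) \;\ge\; |\tri| - (1.0368\,C + 0.607) \;=\; 2(n+1+C) - 1.0368\,C - 0.607 \;=\; 2n + 1 + (0.9632\,C + 0.393).
\end{equation*}

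The case $C = 0$ escapes this estimate — a single (B1) block has deficit $\tfrac23 v_3 > 0.607\,v_3$ — but there $\Omega'$ is a single (B1) block, so $\tri$ is minimal by \cite[Corollary~1.1]{ishikawa_construction_2016} and $c(M) = |\tri| = 2n+2 \ge 2n+1+0.393$, completing the argument. I expect the main obstacle to be the case analysis bounding $\sum_B c_B$: the only configurations approaching the extremal value $0.607$ are the single (B3) block and a (B1) block followed by a (B3) block at the end of $\Omega'$, and one must in particular rule out the naively worst-looking configuration with (B1) blocks at both ends by observing that it forces an interior (B3) block of strictly negative constant. Everything else is bookkeeping with the shape volumes already tabulated.
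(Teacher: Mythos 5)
Your proof is correct and follows the same strategy as the paper: apply $c(M)\geq \mathcal{V}(\Theta^\star)/v_3$ to the block-built angle structure and bound the total volume deficit by $(1.0368\,C+0.607)\,v_3$, with the extremal configuration being a single (B3) block whose (B2) sub-blocks all have length one. The paper's own proof simply asserts the reduction to that worst case and computes it (via a dangling reference to the theorem's proof), so your per-block accounting of the constants $c_B$ — including ruling out the two-(B1)-endpoint configuration — supplies the justification the paper leaves implicit.
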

\begin{proof}
	It follows from the proof of \Cref{thm:complexityBound0.8} that the only case we need to consider is when $\Omega$ decomposes as a single (B1) block of length $3C+1$. Let $\Phi^*$ denote the associated angle structure. The largest deficit is obtained when $\Omega$ decomposes as a single (B3) block of length $3C+1$ consisting of $C$ (B2) blocks of length $1$. Let $\Theta^*$ denote the associated angle structure. We compute the volume functional for $\Theta^*$ as 
	\begin{equation*}
		\mathcal{V}(\Theta^*) = 2\left(v_V + v_I + v_{VI} + (C-1)(2v_{III}+v_{VIII}) + v_{III} + v_{VIII}\right) \approx (4.9632C + 3.3930)v_3.
	\end{equation*}
	The largest deficit obtained from $\Omega\in W_C$ is then 
	\begin{equation*}
		\frac{\mathcal{V}(\Phi^\star)-\mathcal{V}(\Theta^\star)}{v_3} \approx \frac{2(2v_V+3C\,v_3) - (4.9632C+3.3930)v_3}{v_3} \approx 1.0368C - 0.0598.
	\end{equation*}
	This gives the lower bound on the complexity as,
	\begin{align*}
		\frac{\Vol(S^3\backslash K)}{v_3} \geq \frac{\mathcal{V}(\Theta^\star)}{v_3} & \geq \frac{\mathcal{V}(\Phi^\star)}{v_3} - (1.0369C - 0.0597) \\
		                                                                             & = 2\left(n+C-1 + \frac{2v_V}{v_3}\right) - (1.0369C - 0.0597) \\
		                                                                             & \geq 2n + 1 + (0.9632C + 0.393).
	\end{align*}
\end{proof}

In \Cref{thm:complexityBound0.8} we have determined the complexity of infinitely many hyperbolic $2$-bridge links up to a multiplicative constant of $0.8$. In the proof of this theorem we have also certified that the Sakuma-Weeks triangulation of the complement of the $2$-bridge link associated to $\Omega=RL^2R$ is minimal.

We new analyse when the general lower bound in \Cref{thm:complexityBound0.8} is better than that in \Cref{cor:additivecorollary}. For this, consider a word $\Omega\in W_C$ giving $|\tri| = 2(n+C+1)$. This gives us  
\begin{align*}
	2n+1+(0.9632C+0.393) & \leq 0.8|\tri|   \\
	                     & \leq 1.6(n+C+1) 
\end{align*}
and hence
\begin{equation}
	\label{eq:comparison}
	C \geq 0.628n - 0.325.
\end{equation}
We conclude that our additive bound is more suitable when $C$ is small relative to $n$. This aligns with the fact that \Cref{cor:additivecorollary} is meant to provide the complexity of infinitely many $2$-bridge link complements up to a fixed additive constant, only depending on the number $C$ of syllables of length two. \Cref{thm:complexityBound0.8}, on the other hand, is meant to work in the general setting, where we can capitalise on the fact that syllables of length two must become adjacent once they are numerous enough.

\subsection{Comparison using existing volume bounds}

Lower bounds for volumes of alternating links have been established by Lackenby \cite{Lackenby04Bounds} and refined by Agol, Storm and Thurston \cite{Agol07Bounds}. Work by Futer \cite[Appendix B]{gueritaud_canonical_2006} improves this lower bound multiplicatively, establishing 
$2v_3 n - 2.7066 < \Vol(S^3\backslash K(\Omega))$ for all hyperbolic $2$-bridge links constructed from a word $\Omega$ with $n$ syllables. Petronio and Vesnin \cite{petronio-two-sided} prove a further refinement of this as 
\[ v_3\,\cdot \max\{2, 2n-2.6667\} \leq \Vol(S^3\backslash K(\Omega)).\]
To the best of the authors' knowledge, this bound provides the best estimate of the volume for infinite families of hyperbolic $2$-bridge links as
\[\max\{2,2n-2.6667\}\leq c(M).\]
In particular, this bound readily provides a lower bound for the complexity of $S^3\backslash K(\Omega)$. We conclude this article by comparing our results to this lower bound on complexity.

For convenience we work in the setting of \Cref{cor:additivecorollary}. Consider $\Omega\in W_C$ for $C\geq 0$ and let $\tri$ be the Sakuma-Weeks triangulation of the complement of $K(\Omega)$. We write $|\tri| = 2(n+C+1)$. An immediate observation is that for all $n\geq 1$ and $0\leq C\leq n$ we have 
\begin{equation*}
	\max\{2,2n-2.6667\} \leq 2n+1+(0.9632C + 0.393).
\end{equation*}
That is to say incorporating the number of syllables and length of syllables in \Cref{cor:additivecorollary} provides a better lower bound on the complexity for the 2-bridge links we are considering. 

We compare the lower bound in \Cref{thm:complexityBound0.8} in two cases. First, if $n=1,2$ then the Petronio-Vesnin bound gives $2\leq c(M)$. The smallest triangulation we consider occurs when $n=1$ and $C=0$ with $0.8|\tri| = 3.2$. Hence our lower bound is better in this case. For $n\geq 3$ we consider $2n-2.6667\leq 0.8|\tri|$. This is satisfied when 
\begin{equation*}
	C \geq 0.25n - 2.6666.
\end{equation*}
Again, we can see that the bound from \Cref{thm:complexityBound0.8} works better in the case of $C$ being sufficiently large, with the case of relatively small $C$ covered by the bound from \Cref{cor:additivecorollary}.

%%%%%%%%%%%%%%%%%
%%%%%%%%%%%%%%%%%

\bibliographystyle{plain}
\bibliography{refs.bib}

%%%%%%%%%%%%%%%%%
%%%%%%%%%%%%%%%%%

\address{James Morgan\\School of Mathematics and Statistics F07, The University of Sydney, NSW 2006 Australia\\{james.morgan@sydney.edu.au\\-----}}

\address{Jonathan Spreer\\School of Mathematics and Statistics F07, The University of Sydney, NSW 2006 Australia\\{jonathan.spreer@sydney.edu.au}}

\Addresses

\end{document}